\documentclass{article}

\usepackage{amsmath, amssymb, amsthm,tikz}
\usepackage{mathtools}
\usepackage{comment}
\usepackage[all,cmtip]{xy}
\usepackage{xcolor}
\usepackage[utf8]{inputenc}
\usepackage{tikz}
\usepackage{tikz-cd}
\usetikzlibrary{matrix, calc, arrows}
\usetikzlibrary{arrows}
\usepackage{pgfplots}
\usepgfplotslibrary{patchplots}
\usetikzlibrary{patterns, positioning, arrows}
\pgfplotsset{compat=1.15}
\usepackage{hyperref}
\newcommand{\curv}{1.5pc}

\title{On smooth and bundle structures on \\ topological manifolds homotopy equivalent \\ to $S^{4k-1}$-bundles over $S^{4k}$}
\author{Tibor Macko, Ajay Raj}

\date{\today}

\newtheorem{theorem}{Theorem}[section]
\newtheorem{lemma}[theorem]{Lemma}
\newtheorem{proposition}[theorem]{Proposition}
\newtheorem{corollary}[theorem]{Corollary}

\theoremstyle{definition}

\newtheorem{remark}[theorem]{Remark}

\newcommand{\R}{\mathbb{R}}

\newcommand{\Z}{\mathbb{Z}}

\newcommand{\BCAT}{\textup{BCAT}}
\newcommand{\BTOP}{\textup{BTOP}}
\newcommand{\BG}{\textup{BG}}
\newcommand{\CAT}{\textup{CAT}}
\newcommand{\TOP}{\textup{TOP}}
\newcommand{\DIFF}{\textup{DIFF}}
\newcommand{\G}{\textup{G}}
\newcommand{\GmodTOP}{\G/\TOP}
\newcommand{\BPL}{\textup{BPL}}
\newcommand{\PL}{\textup{PL}}
\newcommand{\BO}{\textup{BO}}
\newcommand{\BSO}{\textup{BSO}}
\newcommand{\Orthogroup}{\textup{O}}
\newcommand{\GmodO}{\G/\Orthogroup}
\newcommand{\TOPmodO}{\TOP/\Orthogroup}
\newcommand{\SO}{\textup{SO}}
\newcommand{\SF}{\textup{SF}}
\newcommand{\SH}{\textup{SH}}

\newcommand{\im}{\textup{im}}
\newcommand{\id}{\textup{id}}

\newcommand{\sS}{\mathcal{S}}

\begin{document}
	
	\maketitle
	
	\begin{abstract}
			We first verify the expected calculations of the topological structure set, in the sense of surgery theory, for the total spaces of $S^{4k-1}$-bundles over $S^{4k}$ for $k\ge3$, in which manifolds homotopy equivalent to the total spaces are organized. Next, we investigate the question of which of the elements in these structure sets can be realized as such bundles. Moreover, we study the forgetful map from the smooth version to the topological version of the structure set; we determine its image. All elements in the image turn out to have the underlying manifold such a bundle.
	\end{abstract}
	\let\thefootnote\relax\footnotemark\footnotetext{This work was supported by the grant VEGA 1/0425/25.}
	\let\thefootnote\relax\footnotetext{Keywords and phrase: vector bundle, sphere bundle over sphere, microbundle, homotopy equivalence, homeomorphism, surgery, characteristic class}
	\let\thefootnote\relax\footnotetext{2020 Mathematics Subject Classification: 19J25, 55R25, 55R40, 57N55.}
	\newcommand{\Addresses}{
		\bigskip
		\footnotesize
		
		Ajay Raj, 
		
		\textsc{Department of Applied Sciences and Humanities, PIT, Parul University,\\ Vadodara, Gujarat, India-391760}\par\nopagebreak
		\textit{E-mail address}: \texttt{ajayraj4august@gmail.com}
		
		\medskip
		
		Tibor Macko, 
		
		\textsc{Department of Algebra and Geometry, FMFI, Comenius University, Bratislava, SK-84248, Slovakia, and}
		
		\textsc{Institute of Mathematics, Slovak Academy of Sciences, \v Stef\'anikova 49, Bratislava, SK-81473, Slovakia}\par\nopagebreak
		\textit{E-mail address}: \texttt{tibor.macko@fmph.uniba.sk}
		
		
	}
	\section{Introduction}	
	
	In \cite{RM24} we calculated the topological surgery structure sets of the total spaces of $S^7$-bundles over $S^8$ with structure group $\SO(8)$, and we determined which of their elements can be realized as such bundles. Moreover, we studied the forgetful map from the smooth version to the topological version of the structure set, and showed that it is surjective in some special cases (see Remark 3.5 in \cite{RM24}). The purpose of this article is to generalize that work to $S^{4k-1}$-bundles over $S^{4k}$ with structure group $\SO(4k)$ for $k \ge 3$.
	
	Let $X$ be a closed topological manifold of dimension $d$. The topological structure set of $X$, denoted by $\mathcal{S}^{\TOP}(X)$, is the collection of equivalence classes $[(Y,f)]$ of homotopy equivalences $f: Y \rightarrow X$ with $Y$ a closed topological manifold of dimension $d$, where $(Y_1,f_1)$ and $(Y_2,f_2)$ are equivalent if there exists a homeomorphism $\phi:Y_1 \rightarrow Y_2$ such that $f_{2} \circ \phi \simeq f_1$. Given $X$, a closed smooth manifold of dimension $d$, the smooth structure set of $X$, denoted by $\mathcal{S}^{\DIFF}(X)$, is defined analogously using smooth manifolds and diffeomorphisms. Understanding these structure sets brings us close to the full classification of manifolds in a given homotopy type in the respective categories, and an understanding of the forgetful map $F_X \colon \mathcal{S}^{\DIFF}(X) \rightarrow \mathcal{S}^{\TOP}(X)$ yields an answer to the question of the existence and uniqueness of a smooth structure on topological manifolds in our homotopy type. Surgery theory is a tool designed to investigate these questions. It was applied in the past for many specific examples of such $X$; see, e.g., \cite[Chapter 18]{LM24} or \cite{CW2021} for a recent survey. However, there are also interesting examples that have not yet appeared in the literature. Calculations are harder in the smooth case than in the topological case and, in fact, they are much scarcer; see \cite[Section 18.12]{LM24}. 
	
	In this paper, we focus on the case when $X$ is the total space of an orthogonal $S^{4k-1}$-bundle over $S^{4k}$ for $k \geq 3$. On one hand, these spaces form a well known class of manifolds that have been extensively studied in algebraic topology since its ancient times; see \cite{JW54,JW55}. On the other hand, a detailed study from the point of view of surgery was missing. Our main results, Theorems \ref{thm:top-str-set} and \ref{thm:the-forgetful-map-on-str-sets} below, determine the topological structure set and the image of the forgetful map. Although we do not fully determine the smooth structure sets, we provide important information that will be needed in their study, which we hope to work on in the future. 
	
	Isomorphism classes of oriented $S^{4k-1}$-bundles over $S^{4k}$ with the structure group $\SO(4k)$ are in one-to-one correspondence with the homotopy group $\pi_{4k-1}(\SO(4k))$. Suppose $k \geq 3$. We shall use the isomorphism: 
	\begin{equation}\label{z2z}
		(S,e):\pi_{4k-1}(\SO(4k)) \xrightarrow{\cong} \Z \oplus 2\Z
	\end{equation}
	given as follows. The first factor is $S \colon \pi_{4k-1}(\SO(4k)) \rightarrow \pi_{4k-1}(\SO) \rightarrow \Z$, with the first map induced by the canonical inclusion and the second one being the well-known isomorphism from Bott periodicity. (The letter $S$ stands for ``stabilization''.) The factor $e$ denotes the Euler number, which is known to be always even when $k \geq 3$; hence, it takes values in $2\Z$. The fact that this is an isomorphism can be obtained, for example, from the commutative diagram $(\ast)$ on page 182 in \cite{CCPS23} and we also discuss this in more detail in Section~\ref{subsec:sphere-bdles-and-spherical-fibrations}. 
	
	In other words, we have chosen two generators $\sigma$ and $\rho$, where, say $\sigma$, is sent to $(0,2)$, and the other, $\rho$, is sent to $(1,0)$. (Actually, $\sigma$ turns out to be the tangent bundle $TS^{4k}$; see Section~\ref{subsec:sphere-bdles-and-spherical-fibrations}.) Using this isomorphism, for each pair of integers $(m,n)$, we get a vector bundle 
	\[
	m\rho + n\sigma = \gamma_{m,n}^k:= (\R^{4k}\hookrightarrow E_{m,n}^k \xrightarrow{p_{m,n}} S^{4k}) \in \pi_{4k-1}(\SO(4k)).
	\]
	From this vector bundle, we obtain the corresponding sphere bundle 
	\[
	S(\gamma_{m,n}^k):= S^{4k-1} \hookrightarrow M_{m,n}^k \xrightarrow{\overline{p}_{m,n}} S^{4k}
	\]
	and the corresponding disk bundle $D(\gamma_{m,n}^k):= D^{4k} \hookrightarrow W_{m,n}^k \xrightarrow{\pi_{m,n}} S^{4k}$. The Euler number for the bundle is $e(\gamma_{m,n}^k)=2n$. 
	Note that a change in the orientation of the fiber yields the homeomorphism $M_{m,n}^k \cong M_{m+n,-n}^k$, and a change in the orientation of the base yields $M_{m,n}^k \cong M_{-m,-n}^k$. Therefore, we may assume $n\ge0$. 
	
	We proceed to our main results (using the above notation and keeping the assumption $k \geq 3$). For the first one, let us recall that the topological version of the structure set $\mathcal{S}^{\TOP} (X)$ always possesses a natural abelian group structure; see \cite[Section 11.8]{LM24}.  
	
	\begin{theorem}\label{thm:top-str-set}
		There is an isomorphism $\mathcal{S}^{\TOP}(M_{m,n}^k) \cong \Z/e\Z$ where $e = 2n$ is the Euler number of $\gamma_{m,n}^k$.
	\end{theorem}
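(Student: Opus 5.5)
We compute $\mathcal{S}^{\TOP}(M)$, with $M = M_{m,n}^k$ of dimension $d = 8k-1$ and $\pi_1(M)=1$, from the topological surgery exact sequence of abelian groups
\[
L_{d+1}(1) \to \mathcal{S}^{\TOP}(M) \xrightarrow{\eta} [M, \GmodTOP] \xrightarrow{\theta} L_d(1).
\]
Since $d = 8k-1$ is odd, $L_d(1)=0$. Moreover $L_{d+1}(1)=L_{8k}(1)=\Z$ acts trivially on the structure set of a closed manifold, because the surgery obstruction map $[M\times I,\partial;\GmodTOP]\to L_{8k}(1)$ is surjective. For the latter one can realize any signature multiple of $8$ by connected sum with a Milnor-type $E_8$ plumbing in the topological category; equivalently, one uses Wall realization together with the fact that the action factors through the closed-manifold part. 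Hence $\eta$ is an isomorphism of groups,
\[
\mathcal{S}^{\TOP}(M)\cong [M,\GmodTOP].
\]
So the whole statement reduces to computing the group $[M,\GmodTOP]$ (with the infinite loop space structure coming from the Ranicki/Sullivan $\mathbb{L}$-spectrum, which is the group structure referred to in the paper).

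Next I would determine the cell structure of $M$. From the Gysin/Serre spectral sequence, $H^*(M;\Z)$ is $\Z$ in degrees $0$ and $8k-1$ and $\Z/2n$ in degree $4k$, assuming $n\neq 0$; the degree-$4k-1$ class dies because $d_{4k}$ is multiplication by $e=2n$. Concretely, $M \simeq (S^{4k-1}\cup_{e} e^{4k})\cup e^{8k-1}$, so the $4k$-skeleton is a Moore space $P=S^{4k-1}\cup_{2n} e^{4k}$. The case $n=0$ is handled separately: there $M\simeq (S^{4k-1}\vee S^{4k})\cup e^{8k-1}$, and we get $\Z$, matching $\Z/0\Z$.

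For the computation, the cleanest route is to localize. Away from $2$, $\GmodTOP\simeq \BO[1/2]$, and at $2$, $\GmodTOP$ is a product of Eilenberg--MacLane spaces $\prod K(\Z_{(2)},4i)\times K(\Z/2,4i-2)$. Since both descriptions are spectra-level, $[M,\GmodTOP]$ is a generalized cohomology group $\mathbb{L}^0(M)$. I would run the Atiyah--Hirzebruch spectral sequence, or equivalently the cofiber sequence $P\to M\to S^{8k-1}$, whose long exact sequence reads
\[
\pi_{8k}(\GmodTOP)\to \cdots \to [S^{8k-1},\GmodTOP]=0 \to [M,\GmodTOP]\to [P,\GmodTOP]\to [S^{8k-2},\GmodTOP].
\]
The term $[S^{8k-1},\GmodTOP]$ vanishes since $\pi_{odd}=0$. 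The $E_2$ contributions are then
\[
H^{4k}(M;\pi_{4k})=\Z/2n, \qquad H^{4k-2}(M;\Z/2)=0, \qquad H^{4k-1}(M;\Z/2)=\Z/2 \text{ or } 0.
\]
Degree shifts do not align with nonzero coefficients, except for possible contributions from $\Z/2$ coefficient classes in degree $4k-2$ or $4k$. For $P$, the groups are $[P,\GmodTOP]=\mathbb{L}^0(P)$. From the cofibration $S^{4k-1}\to S^{4k-1}\to P$ we get
\[
0\to \mathrm{coker}\bigl(2n\colon \pi_{4k}\to\pi_{4k}\bigr)\to [P,\GmodTOP]\to \ker\bigl(2n\colon\pi_{4k-1}\to \pi_{4k-1}\bigr)=0,
\]
using $\pi_{4k}=\Z$ and $\pi_{4k-1}=0$. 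This gives $\Z/2n$, and the top cell contributes nothing.

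The main obstacle is bookkeeping the $\Z/2$-classes and possible extension problems at the prime $2$. One must confirm that $\pi_{4k-1}(\GmodTOP)=0$ kills the other term. This is immediate: the homotopy of $\GmodTOP$ is $\Z,0,\Z/2,0$ in degrees $0,1,2,3 \bmod 4$, so no odd-degree contributions arise. Combined with exactness at the top cell, using $[S^{8k-1},\GmodTOP]=0$ and the map to $[S^{8k-2},\GmodTOP]=\Z/2$ which must be checked, we get $[M,\GmodTOP]\cong\Z/2n$.

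The residual check is that the restriction $[M,\GmodTOP]\to[P,\GmodTOP]$ is surjective. Equivalently, the composite of the boundary with the attaching map of the top cell, $[P,\GmodTOP]\to\pi_{8k-2}(\GmodTOP)$, is zero. I would verify this by noting that the attaching map $S^{8k-2}\to P$ is stably of order dividing $2n$ in a group detected by $\mathbb{L}$-theory degree considerations. Alternatively, I would exhibit explicit normal invariants realizing each element of $\Z/2n$ via pinching $M\to S^{4k}$-type maps or fiber homotopy trivializations, so that surjectivity is witnessed directly.
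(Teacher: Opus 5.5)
Your route is the paper's: you identify $\mathcal{S}^{\TOP}(M)$ with $[M,\GmodTOP]$ via the simply connected surgery exact sequence ($L_{8k-1}(\Z)=0$ and $\sigma_{8k}$ onto), then compute $[M,\GmodTOP]$ from $M\simeq P\cup_\beta e^{8k-1}$ with $P=S^{4k-1}\cup_{2n}e^{4k}$. Several steps are fine: the surgery step, the computation $[P,\GmodTOP]\cong\Z/2n$, and injectivity of the restriction $[M,\GmodTOP]\to[P,\GmodTOP]$ (from $\pi_{8k-1}(\GmodTOP)=0$).

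The step you call the ``residual check'' is left open, and it is exactly where the answer could change. Without it you only know that $[M,\GmodTOP]$ is the kernel of $\beta^*\colon\Z/2n\to\pi_{8k-2}(\GmodTOP)\cong\Z/2$, so it is either $\Z/2n$ or $\Z/n$. In Atiyah--Hirzebruch terms this is the possible differential $d_{4k-1}\colon H^{4k}(M;\Z)\to H^{8k-1}(M;\Z/2)$. The $\Z/2$-classes in degree $4k-1$ that you list are irrelevant, since $\pi_{4k-1}(\GmodTOP)=0$. Your first proposed fix does not work: knowing that $\beta$ has order dividing $2n$ says nothing about whether a homomorphism $\Z/2n\to\Z/2$ vanishes, because $2n$ is even.

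The missing idea is short, and it is the correct form of your second suggestion. The bundle projection $p\colon M\to S^{4k}$ sends the fibre $S^{4k-1}$ to a point and maps the $4k$-cell with degree one. Hence, up to homotopy, $p|_P$ is the collapse $q\colon P\to P/S^{4k-1}=S^{4k}$. You already showed that $q^*\colon\pi_{4k}(\GmodTOP)\to[P,\GmodTOP]$ is onto. So every class on $P$ is the restriction of some $p^*x$, the restriction map is surjective, and it is therefore an isomorphism.

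Equivalently, $p$ makes $S^{4k}$ a retract of $M/S^{4k-1}$, so the composite $S^{8k-2}\xrightarrow{\beta}P\to S^{4k}$ is null. Then $\beta$ stably factors through $S^{4k-1}$, and, since $[-,\GmodTOP]$ is a cohomology theory, $\beta^*$ factors through $\pi_{4k-1}(\GmodTOP)=0$. This is the splitting $M/S^{4k-1}\simeq_s S^{4k}\vee S^{8k-1}$ that the paper uses in Section 4. The same argument handles $n=0$ with $P=S^{4k-1}\vee S^{4k}$.

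The paper's own proof compresses this step into its obstruction-theoretic identification $[M,\GmodTOP]\cong H^{4k}(M;\Z)$. Your write-up correctly exposes that the step is needed, but you still have to supply the argument above.
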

	
	In general,  there is no natural group structure on the smooth version of the structure set $\mathcal{S}^{\DIFF} (X)$. Hence the forgetful map $F_X \colon \mathcal{S}^{\DIFF} (X) \rightarrow \mathcal{S}^{\TOP} (X)$ cannot be a homomorphism. However, for $X = M_{m,n}^k$ we show in Corollary~\ref{cor:image-of-F-is-subgroup} that the image of $F_{M_{m,n}^k}$ is a subgroup and hence we can talk about the ``cokernel'' of this map. 
	
	\begin{theorem}\label{thm:the-forgetful-map-on-str-sets}
		The forgetful map $F_{M_{m,n}^k} \colon \mathcal{S}^{\DIFF}(M_{m,n}^k) \rightarrow \mathcal{S}^{\TOP}(M_{m,n}^k)$ has its cokernel isomorphic to $\Z/g\Z$ where $g$ is given as follows. 
		
		When $n \neq 0$, then $g$= gcd$(e,\theta_k)$ with $e=2n$ the Euler number of the bundle and 
		\[
		\theta_k = \text{numerator}\left(\frac{B_k}{4k}\right)\cdot a_k \cdot 2^{2k-2}\cdot(2^{2k-1}-1)
		\]
		where $B_k$ is $k$-th Bernoulli number and $a_k$ is $1$ or $2$ according to $k$ is even or odd.
		
		When $n=0$, then $g=\theta_k$. 
	\end{theorem}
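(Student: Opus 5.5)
We compare the smooth and topological surgery exact sequences for $X = M^k_{m,n}$ and reduce the statement to a computation on the normal invariant sets. Since $X$ is simply connected of dimension $8k-1$ (odd), $L_{8k}(1)=\Z$ acts on the structure set, and the smooth and topological sequences map to each other:
\[
L_{8k}(1) \to \mathcal{S}^{\CAT}(X) \xrightarrow{\eta} [X,\G/\CAT] \xrightarrow{\theta} L_{8k-1}(1)=0 .
\]
Thus $\eta^{\TOP}$ is onto $[X,\G/\TOP]$. The image of $F_X$ is then the set of topological structures whose normal invariant lifts to $[X,\G/\Orthogroup]$, modulo the action of $L_{8k}$. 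In the topological case that action is trivial, so $\mathcal{S}^{\TOP}(X)\cong\ker([X,\G/\TOP]\to L_{8k})$ restricted to the relevant part. Hence
\[
\operatorname{coker} F_X \cong [X,\G/\TOP]\big/\bigl(\im[X,\G/\Orthogroup] + \text{(kernel adjustments)}\bigr),
\]
using Theorem~\ref{thm:top-str-set} and its proof, where $\mathcal{S}^{\TOP}(X)\cong\Z/e$ presumably arises as the $S^{4k}$-cell contribution $\pi_{4k}(\G/\TOP)=\Z$ modulo the image of a boundary or transfer.

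\textbf{Cell structure.} $X$ has cells in dimensions $0$, $4k-1$, $4k$ and $8k-1$, with the $4k$-cell attached to $S^{4k-1}$ by a map of degree $e$. Hence $H^{4k}(X)=\Z/e$ and $H^{4k-1}=0$ when $n\ne0$. Using $\G/\TOP[1/2]\simeq \BO[1/2]$ and the $2$-local splitting into Eilenberg--MacLane spaces, I compute
\[
[X,\G/\TOP]\cong \Z/e \oplus \Z .
\]
Here the $\Z/e$ comes from $\widetilde{KO}$ or $H^{4k}$ of the $4k$-skeleton, and the $\Z$ from the top cell. The top-cell $\Z$ maps isomorphically onto $L_{8k}$ (this is the surgery obstruction), which leaves $\mathcal{S}^{\TOP}\cong \Z/e$.

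\textbf{Smooth image.} On the $4k$-skeleton $S^{4k-1}\cup_e D^{4k}$, the map $[-,\G/\Orthogroup]\to[-,\G/\TOP]$ is analyzed via the cofibration sequence. On $S^{4k}$, the map $\pi_{4k}(\G/\Orthogroup)\to\pi_{4k}(\G/\TOP)=\Z$ has image of index $\theta_k$. This index is determined through the exact sequence $\pi_{4k}\G/\Orthogroup\to\pi_{4k}\G/\TOP\to \pi_{4k-1}(\TOP/\Orthogroup)=\Theta_{4k-1}\supset bP_{4k}$. The order of $bP_{4k}$ is $a_k 2^{2k-2}(2^{2k-1}-1)\,\mathrm{num}(4B_k/k)$, and after normalizing by the $\Z/8$ signature convention this matches $\theta_k$ as stated; I would check the exact factor against the Kervaire--Milnor formula. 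The image of the smooth normal invariants in $\Z/e$ is therefore the image of $\theta_k\Z$, namely the subgroup $\theta_k\Z/e\cap\ldots$, whose quotient is $\Z/\gcd(e,\theta_k)$. For $n=0$ the structure set is $\Z$ rather than $\Z/0$, and the quotient is $\Z/\theta_k$.

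\textbf{Main obstacle.} The delicate step is ensuring that nothing else contributes to the image of $F_X$:
\begin{itemize}
\item the $(4k-1)$-cell contributes $\pi_{4k-1}(\G/\TOP)=0$;
\item the top cell has to be handled with the $L_{8k}$-action, where smooth structures can be changed by exotic spheres acting by connected sum, and these map trivially in $\mathcal{S}^{\TOP}$.
\end{itemize}
This needs naturality of the identification $\mathcal{S}^{\TOP}(X)\cong\Z/e$ with the restriction to the $4k$-skeleton, and that the restriction $[X,\G/\Orthogroup]\to[S^{4k-1}\cup_e D^{4k},\G/\Orthogroup]$ hits enough classes. Obstructions to extending over the top cell lie in $\pi_{8k-1}(\G/\Orthogroup)$-related groups, which I would show vanish or are irrelevant by comparing with the topological extension. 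That the image is a subgroup then follows, matching the later Corollary~\ref{cor:image-of-F-is-subgroup}.
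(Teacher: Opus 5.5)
\textbf{The gap: the upper bound on $\im\chi$.} The missing step is the sentence ``the image of the smooth normal invariants in $\Z/e$ is therefore the image of $\theta_k\Z$''.

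Pulling back $\pi_{4k}(\GmodO)$ along the (stable) collapse $X\to S^{4k}$ only proves $\theta_k\cdot(\Z/e)\subseteq\im\chi$. That says only that the cokernel is a quotient of $\Z/g$.

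For the reverse inclusion you must control classes $y\in[X,\GmodO]$ whose restriction to the bottom cell is a nonzero element of $\pi_{4k-1}(\GmodO)\cong\operatorname{coker}J_{4k-1}$. This group is nonzero in general, e.g.\ $\Z/2$ in stem $15$. Since $\pi_{4k-1}(\GmodTOP)=0$, $\chi(y)$ does come from $\pi_{4k}(\GmodTOP)$. But the $4k$-cell is attached by degree $e$, so the resulting element of $\Z/e$ is a Toda-bracket-type quantity. It is not determined by $\partial\colon\pi_{4k}(\GmodO)\to\pi_{4k}(\GmodTOP)$, and a priori it could lie outside $\theta_k\cdot(\Z/e)$. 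Your list of obstacles only records $\pi_{4k-1}(\GmodTOP)=0$, which is the harmless side.

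Equivalently, $\im\chi=\ker\bigl([X,\GmodTOP]\to[X,B(\TOPmodO)]\bigr)$. On $Y=S^{4k-1}\cup_e e^{4k}$, identifying this kernel with $g\cdot(\Z/e)$ amounts to showing that the Milnor sphere generating $bP_{4k}$ has order exactly $g$ in $\Theta_{4k-1}/e\,\Theta_{4k-1}$. The kernel on $X$ is contained in the kernel on $Y$, because $[X,\GmodTOP]\to[Y,\GmodTOP]$ is an isomorphism. This needs real input, such as Brumfiel's theorem that $bP_{4k}$ is a direct summand of $\Theta_{4k-1}$.

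The paper supplies this in its ``$J_{4k-1}$ not surjective'' case:
\begin{enumerate}
\item Lemma~\ref{image} writes every $y$ as $\delta(z)-\overline r(x)$ with $z\in[X,\G]$, using Adams' splitting of $\im J_{4k-1}$ off $\pi^s_{4k-1}$.
\item Hence $\im\chi$ is generated by $\im\chi''=\theta_k\cdot(\Z/e)$ and $\im\chi'=\ker([X,\GmodTOP]\to[X,\BTOP])$.
\item $\im\chi'$ injects into the corresponding kernel on $Y$, which Brumfiel's formula $\zeta_1(1)=(j'_k\beta,t_1)$ bounds by $\Z/\gcd(j'_k,e)$.
\item Since $\gcd(j'_k,\theta'_k)=1$, one gets $\gcd(j'_k,e)\mid e/g$, so $\im\chi'\subseteq\im\chi''$.
\end{enumerate}
For $n=0$ the issue disappears, since $Y\simeq S^{4k-1}\vee S^{4k}$ and $\chi$ kills $\pi_{4k-1}(\GmodO)$. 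For $n\neq 0$ it is the heart of the proof.

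\textbf{Smaller points.} Your reduction itself is fine. Since $L_{8k-1}(\Z)=0$, $\eta^{\DIFF}$ is onto and $\eta^{\TOP}$ is bijective, so $\operatorname{coker}F_X\cong\operatorname{coker}\bigl(\chi\colon[X,\GmodO]\to[X,\GmodTOP]\bigr)$ with no ``kernel adjustments''. This is slightly more direct than the paper, which instead uses the bundle realizations of Proposition~\ref{gen2} to identify $\im(\eta^{\TOP}\circ F)$ with $\im\chi$.

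Your computation $[X,\GmodTOP]\cong\Z/e\oplus\Z$ is wrong. The top cell has dimension $8k-1$ and $\pi_{8k-1}(\GmodTOP)=L_{8k-1}(\Z)=0$. The surgery obstruction on $\mathcal{N}^{\TOP}(X)$ lands in $L_{8k-1}(\Z)=0$, not in $L_{8k}$; the $L_{8k}$-action comes from normal invariants of $X\times I$ rel $\partial$. Your two errors cancel, and Theorem~\ref{thm:top-str-set} gives $\Z/e$ anyway.

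On the lower-bound side, the obstruction to extending a class from $Y$ over the top cell lies in $\pi_{8k-2}(\GmodO)$, not $\pi_{8k-1}$. For classes pulled back from $S^{4k}$ it vanishes, because $S^{8k-2}\to Y\to S^{4k}$ is stably null (James--Whitehead) and $\GmodO$ is an infinite loop space. This is what the stable splitting $M/S^{4k-1}\simeq_s S^{4k}\vee S^{8k-1}$ in the paper encodes.
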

	
	The occurrence of the Bernoulli numbers and the numbers $\theta_k$ is related to their appearance in the work of Kervaire-Milnor on exotic spheres, which will be recalled in more detail in Subsection~\ref{subsec:sKM-braid-and-Brumfiel-results}. 
	
	These theorems turn out to have corollaries that are interesting in their own right. Here are some of them: 
	
	\begin{corollary}\label{cor:never-surjective}
		The forgetful map $F_{M_{m,n}^k}$ is never surjective.
	\end{corollary}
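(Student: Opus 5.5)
The plan is to deduce the corollary directly from Theorem~\ref{thm:the-forgetful-map-on-str-sets}. By that theorem, the image of $F_{M_{m,n}^k}$ is a subgroup of $\mathcal{S}^{\TOP}(M_{m,n}^k)$ whose cokernel is $\Z/g\Z$. So $F_{M_{m,n}^k}$ is surjective exactly when this cokernel is trivial, i.e.\ when $g = \pm 1$. The task therefore reduces to showing that $g \neq 1$ in every case. We will in fact show that $2 \mid g$ always.

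First I would check that $\theta_k$ is even for every $k \ge 3$. Write
\[
\theta_k = \text{numerator}\left(\frac{B_k}{4k}\right)\cdot a_k \cdot 2^{2k-2}\cdot(2^{2k-1}-1).
\]
The numerator factor is a nonzero integer, since $B_k \neq 0$. The factor $a_k$ is $1$ or $2$, and $2^{2k-1}-1$ is odd, so neither affects the conclusion. The key factor is $2^{2k-2}$: since $k \ge 3$, we have $2k-2 \ge 4$, so $2^4 = 16$ divides $\theta_k$. In particular $\theta_k$ is even and nonzero, hence $\theta_k \neq \pm 1$.

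Now split into the two cases of Theorem~\ref{thm:the-forgetful-map-on-str-sets}.
\begin{itemize}
\item If $n = 0$, then $g = \theta_k$, and by the previous step the cokernel $\Z/\theta_k\Z$ is nontrivial. In this case it is even infinite-index-free but of order at least $16$.
\item If $n \neq 0$, then $g = \gcd(2n, \theta_k)$. Both $2n$ and $\theta_k$ are even, so $2 \mid g$, and hence $\Z/g\Z \neq 0$.
\end{itemize}
In both cases the cokernel is nonzero, so the image is a proper subgroup and $F_{M_{m,n}^k}$ is not surjective.

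There is no real obstacle here. The only points to watch are that the cokernel statement genuinely identifies the image as a proper subset, which holds because the image is a subgroup by Corollary~\ref{cor:image-of-F-is-subgroup}, and that $\theta_k$ is nonzero, which follows from $B_k \neq 0$. Everything else is elementary divisibility.
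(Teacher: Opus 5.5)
Your proposal is correct and matches the paper's proof: both deduce the corollary from Theorem~\ref{thm:the-forgetful-map-on-str-sets} by observing that $e=2n$ and $\theta_k$ are both even, so $g \geq 2$ and the cokernel $\Z/g\Z$ is nontrivial. Your explicit check that $2^{2k-2}$ divides $\theta_k$ for $k \ge 3$ only fills in a detail the paper leaves implicit; the phrase ``infinite-index-free'' in the $n=0$ case is garbled and should be deleted.
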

	
	\begin{corollary}\label{cor:any-smooth-is-bundle}
		Any element in the image of $F_{M_{m,n}^k}$ can be realized with the source manifold as an $S^{4k-1}$-bundle over $S^{4k}$ with the structure group $\SO(4k)$.
	\end{corollary}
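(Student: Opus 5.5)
We prove Corollary~\ref{cor:any-smooth-is-bundle} by counting the bundle realizations inside the topological structure set and comparing them with the image of $F_{M_{m,n}^k}$. Throughout we identify $\mathcal{S}^{\TOP}(M_{m,n}^k)\cong \Z/e\Z$ via Theorem~\ref{thm:top-str-set}. Concretely, this isomorphism will be realized through the surgery exact sequence: the normal invariant lands in $[M_{m,n}^k,\GmodTOP]$, which is detected in degree $4k$ by a $\Z$ (the class of the top cell of the base sphere, i.e.\ $\pi_{4k}(\GmodTOP)\cong\Z$ pulled back along $\overline{p}_{m,n}$). Modulo the image of the action of $L_{8k}(e)=\Z$ and the relation coming from the Euler class in $H^{4k}(M_{m,n}^k)\cong\Z/e$, this gives the cyclic group $\Z/e$.

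\textbf{Step 1: bundle realizations.} For each $j\in\Z$ we consider the bundle $\gamma_{m+j',n}^k$ with the same Euler number $2n$, and ask whether there is a fibre homotopy equivalence $S(\gamma_{m',n}^k)\to S(\gamma_{m,n}^k)$ covering the identity of $S^{4k}$. Such an equivalence exists exactly when the images of the two bundles in $\pi_{4k-1}(\G(4k))$, or stably in $\pi_{4k-1}^s$ via the $J$-homomorphism, agree. Since $J\colon\pi_{4k-1}(\SO)\to\pi^s_{4k-1}$ has image cyclic of order $\operatorname{denominator}(B_k/4k)$, the set of $m'$ that are admissible is $m+ j\cdot j_k\Z$ for an explicit $j_k$. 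Each such pair then gives an element $[(M_{m',n}^k,h)]\in\mathcal{S}^{\TOP}(M_{m,n}^k)$, and in fact of $\mathcal{S}^{\DIFF}$, since these are smooth manifolds.

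\textbf{Step 2: compute normal invariants.} The normal invariant of $h$ is the difference of the stable tangent bundles, which is pulled back from $S^{4k}$. It is therefore determined by the class in $\pi_{4k}(\GmodTOP)\cong\Z$, read off from the difference of $p_k$: the integer $(m'-m)$ is multiplied by a factor involving $a_k(2k-1)!$ divided by $8$. In terms of $\Z/e$, the realized elements form the subgroup generated by a computable integer. We must check that this subgroup is exactly $\theta_k\Z/e\cap(\ldots)$, i.e.\ has index $g=\gcd(e,\theta_k)$ (respectively $\theta_k$ when $n=0$).

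\textbf{Step 3: comparison with the image of $F$.} By Corollary~\ref{cor:image-of-F-is-subgroup} and Theorem~\ref{thm:the-forgetful-map-on-str-sets}, the image of $F$ is the subgroup of index $g$. The bundle realizations are smooth, so they lie in the image; by Step 2 they form a subgroup of the same index. Hence the two subgroups coincide, and every element of the image is realized by some $M_{m',n}^k$.

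\textbf{Main obstacle.} The hard part is Step 2, namely matching the integer produced by the change of $p_k$ (which involves $a_k$, $(2k-1)!$ and the order of $\im J$) with the constant $\theta_k$ from Kervaire--Milnor. This requires care with the identification $\pi_{4k}(\GmodTOP)\to\Z$ (a factor $8$ from the surgery obstruction) and with the fibre-homotopy condition. I would first verify the computation via the Adams conjecture, which gives the order of $\im J$, and the Brumfiel splitting. The case $n=0$, where the image is infinite cyclic and the comparison must be done in $\Z$, I would handle separately.
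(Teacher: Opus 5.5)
Your architecture matches the paper's. You realize elements by the fibre homotopy equivalences $f_t\colon M^k_{m+j_kt,n}\to M^k_{m,n}$ and show $\eta(f_t)=\theta_k t$. The realized normal invariants are then the index-$g$ subgroup $\langle\theta_k\rangle\subseteq\Z/e\Z$, and $\im(\eta^{\TOP}\circ F)$ is squeezed between this subgroup and $\im\chi$ (Corollary~\ref{acf}).

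For the upper bound, cite Theorem~\ref{fn} together with $\im(\eta^{\TOP}\circ F)\subseteq\im\chi$. In the paper, Theorem~\ref{thm:the-forgetful-map-on-str-sets} and Corollary~\ref{cor:image-of-F-is-subgroup} are themselves deduced from Corollary~\ref{acf}, which already contains the statement you are proving.

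\textbf{The gap is Step~2.} It carries all the content, you never compute $\eta(f_t)$, and the route you indicate fails on $M=M^k_{m,n}$.
\begin{itemize}
\item The normal invariant is not the difference of stable tangent bundles. That difference is only its image under $\Psi\colon[M,\GmodTOP]\to[M,\BTOP]$, and $\Psi$ need not be injective.
\item Its kernel is the image of $[M,\G]$. The paper only confines this to the subgroup of order $\gcd(j_k',e)$, and it can be nonzero.
\item Example: take $k=3$, $e=6$, so $j_3'=63$ and $T\cong\Z/8\Z$. The element $2\in[M,\GmodTOP]\cong\Z/6\Z$ goes to $(2j_3',2t_1)=(0,0)$ in $\Z/6\Z\oplus T/6T\subseteq[Y,\BTOP]$. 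So $\Psi(2)$ restricts to zero on the $4k$-skeleton $Y$ and comes from $\pi_{23}(\BTOP)\cong\Theta_{22}$, a $2$-group. Since $\Psi(2)$ has order dividing $3$, it vanishes.
\item Likewise, $p_k$ lands in the torsion group $H^{4k}(M)\cong\Z/e\Z$, where rational characteristic classes see nothing.
\end{itemize}
From bundle data on $M$ you only get $\eta(f_t)\in\overline{p}^{\,*}(x)+\ker\Psi$. In the example this is $\{0,2,4\}$, which allows every $f_t$ to have trivial normal invariant. The corollary, however, needs the realized set to be all of $\langle\theta_k\rangle$.

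\textbf{The missing idea} is the passage to disk bundles in Proposition~\ref{gen2}.
\begin{itemize}
\item Cone $f_t$ radially to a homotopy equivalence $F_t\colon W^k_{m+j_kt,n}\to W^k_{m,n}$. Then $\eta(f_t)=i^*\eta(F_t)$ by \eqref{bdry}.
\item Compute $\eta(F_t)$ in $[W^k_{m,n},\GmodTOP]\cong\pi_{4k}(\GmodTOP)\cong\Z$. There $j_*$ is injective, and $j_*\eta(F_t)$ restricts on the zero section to $\gamma^k_{m+j_kt,n}-\gamma^k_{m,n}$.
\end{itemize}
On $W$ your Pontryagin class idea works and is a valid, more direct substitute for the paper's use of Brumfiel's $\zeta_1,\zeta_2$ (Lemma~\ref{lem:Brumfiel-result-about-zeta_one-two}):
\begin{itemize}
\item the bundle difference has $p_k=\pm a_k(2k-1)!\,j_kt$;
\item the surgery obstruction is $\pm s_kp_k/8$, where $s_k=2^{2k}(2^{2k-1}-1)B_k/(2k)!$ is the coefficient of $p_k$ in $L_k$ (the factor missing from your sketch);
\item the identity $B_k/2k=2\,\mathrm{numerator}(B_k/4k)/j_k$ turns this into exactly $\pm\theta_k t$.
\end{itemize}

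\textbf{A smaller point in Step~1.} Agreement of stable $J$-images is not by itself enough for an unstable fibre homotopy equivalence. You also need $\pi_{8k-2}(S^{4k-1})\cong\pi_{4k-1}(\SF(4k))\to\pi^s_{4k-1}$ to be injective, so that the unstable $J$-image also has order $j_k$. The paper obtains this from the EHP sequence.
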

	
	\begin{corollary}\label{cor:if-n-theta-coprime-then-half-of-elements-are-bundles}
		If gcd$(n,\theta_k)=1$ then half of the elements of $\mathcal{S}^{\TOP}(M_{m,n}^k)$ can be realized with the source manifold as an $S^{4k-1}$-bundle over $S^{4k}$.
	\end{corollary}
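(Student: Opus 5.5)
The corollary should follow by combining Theorem~\ref{thm:top-str-set}, Theorem~\ref{thm:the-forgetful-map-on-str-sets} and Corollary~\ref{cor:any-smooth-is-bundle}. The bundle-realizable elements will be shown to contain the image of $F_{M_{m,n}^k}$, which is a subgroup of index $g$, and the hypothesis will force $g=2$.

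First, since $\gcd(n,\theta_k)=1$ is assumed, either $n\neq 0$, or $n=0$ and $\theta_k=1$. The second case is impossible: $\theta_k$ contains the factor $2^{2k-2}$, which is at least $16$ for $k\ge 3$. Hence $n\neq 0$, and Theorem~\ref{thm:the-forgetful-map-on-str-sets} gives $g=\gcd(2n,\theta_k)$. Since $\gcd(n,\theta_k)=1$ and $2\mid\theta_k$, we have $\gcd(2n,\theta_k)=2\gcd(n,\theta_k/2)$. Any common divisor of $n$ and $\theta_k/2$ divides both $n$ and $\theta_k$, so it is $1$. Therefore $g=2$.

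Second, by Theorem~\ref{thm:top-str-set}, $\mathcal{S}^{\TOP}(M_{m,n}^k)\cong\Z/2n\Z$ is a finite group of order $2n$. By Corollary~\ref{cor:image-of-F-is-subgroup} the image of $F_{M_{m,n}^k}$ is a subgroup, and its cokernel is $\Z/2\Z$. So the image has index $2$ and contains exactly $n$ elements, i.e.\ half of the structure set. By Corollary~\ref{cor:any-smooth-is-bundle}, each element of the image is represented by a homotopy equivalence whose source is an $\SO(4k)$ sphere bundle $M^k_{m',n'}$. Thus at least half of the elements are realized by bundles.

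The main obstacle is the reverse inclusion: showing that no element outside the image is realizable by a bundle, so that exactly half are. If $f\colon M^k_{m',n'}\to M^k_{m,n}$ is a homotopy equivalence from a sphere bundle, then $M^k_{m',n'}$ is smooth, so $[(M^k_{m',n'},f)]$ is the image under $F_{M_{m,n}^k}$ of the smooth structure $(M^k_{m',n'},f)$. This uses that the smooth structure set is defined with arbitrary homotopy equivalences from smooth manifolds, and that the equivalence relations are compatible. Hence the bundle-realizable elements are precisely the image of $F_{M_{m,n}^k}$. For a genuine $\SO(4k)$ bundle this reverse inclusion is therefore essentially formal, and the substantive input is the realization in Corollary~\ref{cor:any-smooth-is-bundle}. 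If the paper intends "bundle" to include topological $S^{4k-1}$-bundles, I would instead identify which elements arise via the normal invariant and $p_k$ computation from the proof of Theorem~\ref{thm:the-forgetful-map-on-str-sets}, but the statement only needs the count of $n$ out of $2n$.
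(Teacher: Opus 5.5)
Your proposal is correct and follows the paper's route: the paper's proof is the single line "In this case $g=2$", combining Theorem~\ref{thm:the-forgetful-map-on-str-sets} with Corollaries~\ref{cor:any-smooth-is-bundle} and~\ref{cor:image-of-F-is-subgroup}. You additionally rule out $n=0$, verify $\gcd(2n,\theta_k)=2$ explicitly, and observe that every $\SO(4k)$-bundle total space is smooth, so the bundle-realizable classes are exactly $\im(F_{M_{m,n}^k})$; this sharpens ``at least half'' to ``exactly half'', which the paper leaves implicit.
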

	
	An additional, more technical result is discussed in Section~\ref{sec:leftovers}.
	
	We would also like to mention further related recent work. The article \cite{biswas2026} treats the case $(m,n)=(0,0)$, which yields $M_{(0,0)} = S^{4k-1} \times S^{4k}$. The paper \cite{zhu-pan(2026)} provides homotopy classification of $S^{2n-1}$-bundles over $S^{2n}$. Combining that work in the case when $n=2k$ with our work could help in the homeomorhism or diffeomorphism classification of such bundles.
	
	The paper is structured as follows. In Section \ref{sec:tools}, we recall the necessary tools, namely the basic surgery setup, basic results about vector bundles, sphere bundles, and spherical fibrations and their relationship, which involves the well-known $J$-homomorphism from homotopy theory, and results of Brumfiel about the Kervaire-Milnor braid from \cite{GB68}. Theorem \ref{thm:top-str-set} is proved here using the standard surgery theory setup. In Section \ref{sec:realisation_results} we generalize a method by Crowley and Escher \cite{CE03} to produce specific elements in the topological structure set $\mathcal{S}^{\TOP}(M_{m,n}^k)$ with the underlying manifold $M_{m',n}^k$ for some other $m'$. The key idea is to view sphere bundles as spherical fibrations. When we do this, different sphere bundles may produce fiber homotopy equivalent spherical fibrations, whose total spaces are obviously homotopy equivalent. What we do is identify homotopy equivalences obtained in this way as certain elements in the structure set. All these examples will be smooth by construction, giving us a lower bound for the image of the forgetful map; the final result is in Theorem~\ref{thm:alpha}. In Section \ref{sec:forgetful_map}, we prove Theorem~\ref{thm:the-forgetful-map-on-str-sets}. Surgery theory allows us to translate the desired statement to a statement about a map between the groups of homotopy classes of maps from $M_{m,n}^{k}$ into certain classifying spaces $\G/\CAT$, where $\G/\CAT = \GmodO$ or $\G/\CAT = \GmodTOP$ are well known, see Theorem~\ref{fn}. This map is then studied using various exact sequences that we obtain from a certain stable homotopy decomposition of $M_{m,n}^k$ and from relating the spaces $\G/\CAT$ to other classifying spaces appearing in the Kervaire-Milnor braid and utilizing the results of Brumfiel. Proof of Theorem~\ref{thm:the-forgetful-map-on-str-sets} as well as proofs of Corollaries~\ref{cor:never-surjective}, \ref{cor:any-smooth-is-bundle}, \ref{cor:if-n-theta-coprime-then-half-of-elements-are-bundles} using Theorem~\ref{fn} are at the end of the section. Section~\ref{sec:leftovers} contains a discussion of some additional topics. 
	
	\subsection{Acknowledgments}
	
	The present paper is based on the PhD thesis of AR defended in the summer of 2025, TM was the advisor of AR. The authors would like to thank Diarmuid Crowley for useful suggestions regarding the material in Section~\ref{sec:forgetful_map}.

	
	\section{Tools}\label{sec:tools}
	

	In this section, we shall establish the tools that will be used in proving our results. Let us start with the cohomology of the total spaces $M_{m,n}^k$.
	
	Let $n >0$ and assume $k \geq 3$.  We already know that $e(\gamma_{m,n}^k) = 2n$. Now standard algebraic topology yields the following isomorphisms:
	\begin{align*}
		H^{0}(M_{m,n}^k) & \cong H^{8k-1}(M_{m,n}^k) \cong \Z; \\
		H^{4k}(M_{m,n}^k) & \cong  \Z/e\Z; \\
		H^{i}(M_{m,n}^k) & \cong 0 \text{ for } i \neq 0, 4k, 8k-1,
	\end{align*}
	where the second isomorphism is established using the Gysin sequence. 
	
	When $n=0$, we get $H^{j}(M_{m,0}^k)\cong \Z$ when $j=0,4k-1,4k$ and $8k-1$ and $H^{j}(M_{m,0}^k)\cong 0$ otherwise. 
	
	In the sequel, we will use the symbol $\omega_{4k}$ for the standard generator of $H^{4k}(S^{4k};\Z)$. 
	
	\subsection{Simply connected surgery theory}\label{subsec:simply-ctd-surgery-theory}
	
	Next, let us recall some surgery theory. Let $\CAT = \DIFF$ or $\TOP$. Given a closed $d$-dimensional $\CAT$-manifold $X$, there is the $\CAT$-surgery exact sequence \cite[Chapter 11]{LM24}
	\begin{equation} \label{eqn:cat-ses}
		\cdots \xrightarrow{\sigma_{\ldots}} L_{d+1}(\Z[\pi_{1}(X)])\xrightarrow{\partial_{d+1}} \mathcal{S}^{\CAT}(X)\xrightarrow{\eta_d} \mathcal{N}^{\CAT}(X)\xrightarrow{\sigma_d} L_{d}(\Z[\pi_{1}(X)]).
	\end{equation}
	When $\CAT=\TOP$ this is an exact sequence of abelian groups, when $\CAT=\DIFF$ this is only an exact sequence of pointed sets for general $X$, see \cite[Sec. 11.8]{LM24}. The manifolds we study are $X=M_{m,n}^k$ and their dimension is $d=8k-1$. The long exact sequence of homotopy groups of a fiber bundle implies that they are simply-connected. Therefore, the relevant $L$-groups are 
	\[
	L_d (\Z) \cong \Z,0,\Z/2,0 \quad \textup{for} \quad d \equiv 0,1,2,3 \mod 4.
	\]
	When $\CAT=\TOP$, the simply-connected surgery obstruction maps $\sigma_d$ are surjective for all $d$. Hence, we obtain a well known fact (see \cite[Theorem 1.65]{CW2021}) that for odd-dimensional, simply-connected (topological) $d$-manifolds the map $\eta_d$ is an isomorphism:
	\begin{equation}
		\eta_d \colon \mathcal{S}^{\TOP}(X) \xrightarrow{\cong} \mathcal{N}^{\TOP}(X).
	\end{equation}
	The set (group) $\mathcal{N}^{\CAT}(X)$, known as the set of normal invariants, is defined as a certain set of bordism classes of degree one normal maps in the sense of surgery. However, for us, its homotopy theoretic description via the standard Pontryagin-Thom type construction as the set of homotopy classes of maps will be of higher importance:
	\begin{equation} \label{eqn:ni-is-maps-to-G-mod-CAT}
		\mathcal{N}^{\CAT}(X) \cong [X,\G/\CAT].
	\end{equation}
	Here $\G/\CAT$ is the homotopy fiber of the canonical map $j: \BCAT \rightarrow \BG$. If $\CAT = \TOP$, then $\BTOP$ is the classifying space for stable topological microbundles (the right type of bundles for topological manifolds; see \cite{KS77}) and $\BG$ is the classifying space of stable spherical fibrations. If $\CAT = \DIFF$, then $\BCAT = \BO$ is the classifying space for stable vector bundles. 
	
	The space $\GmodO$ is an infinite loop space, where the $H$-space structure is obtained from the Whitney sum operations on $\BO$ and $\BG$. The space $\GmodTOP$ turns out to have two different infinite loop space structures. The $H$-spaces structure of one of them is also obtained from the Whitney sum operations on $\BTOP$ and $\BG$. The second one, which is more subtle, comes from topological surgery theory \cite[Sec. 11.8]{LM24}. These $H$-space structures induce a-priori different group structures on $[X,\GmodTOP]$. However, it turns out that for $X = S^l$ and for $X = M_{m,n}^{k}$ these group structures agree, see Lemma~\ref{lem:group-structures-on-ni-of-E}.
	
	If our $d$-dimensional compact manifold $X$ has a boundary and $i:\partial X \hookrightarrow X$ is the inclusion of the boundary, then there exist several versions of the structure sets for $X$ and the corresponding surgery exact sequences. Here we need the one, which we denote by $\mathcal{S}^{\TOP}(X)$, where elements are represented by homotopy equivalences $(f,\partial f) \colon (Y,\partial Y) \rightarrow (X,\partial X)$ with $(Y,\partial Y)$ another compact manifold with boundary. Note that $\partial f$ is not assumed to be a homeomorphism (there is a version of the structure set with such a condition, see \cite[Ch. 11]{LM24}, but we will not use it). There is the corresponding notion of normal invariants and $L$-groups and the surgery exact sequence, see \cite{Wall99}, \cite[Ch. 11.11]{LM24}. We have an obvious  commutative diagram:
	\begin{equation}\label{bdry}
		\begin{tikzcd}[row sep=1em,column sep=1em]
			\mathcal{S}^{\TOP}(X) \arrow[d,"i^*"] \arrow[r,"\eta"] & \mathcal{N}^{\TOP}(X) \arrow[d,"i^*"] \\
			\mathcal{S}^{\TOP}(\partial X) \arrow[r,"\eta"] & \mathcal{N}^{\TOP}(\partial X) 
		\end{tikzcd}
	\end{equation}
	
	\noindent \textbf{Proof of Theorem 1.1:} The proof proceeds by applying the surgery theory described above to the pair $(W_{m,n}^k,M_{m,n}^k)$ of the disk bundle and sphere bundle defined earlier. Note that $\partial W_{m,n}^k=M_{m,n}^k$. The $\pi-\pi$ Theorem (see \cite[Theorem 1.65]{CW2021}) implies that in this case the top horizontal map $\eta$ in~\eqref{bdry} is an isomorphism. Furthermore, primary obstruction to null homotopy implies (see \cite[Theorem 13.11, Chapter VII]{GEB}):
	\begin{equation}\label{idfic}
		[W_{m,n}^{k},\GmodTOP]\cong H^{4k}(W_{m,n}^k;\Z) \text{ and } [M_{m,n}^k,\GmodTOP] \cong H^{4k}(M_{m,n}^k;\Z) 
	\end{equation}
	Also note that for any $d>0$, $\pi_{d}(\GmodTOP) \cong L_{d}(\Z)$. Therefore for $n>0$, we get the following commutative diagram:
	
	\begin{equation}\label{equation:ses}
		\begin{tikzcd}[row sep=1em,column sep=1em]
			\mathcal{S}^{\TOP}(W_{m,n}^k) \arrow[d,"i^*"] \arrow[r,"\overset{\eta}{\cong}"] & \mathcal{N}^{\TOP}(W_{m,n}^k) \arrow[d,"i^*"] \arrow[r,"\cong"] & H^{4k}(W_{m,n}^k) \arrow[d,"i^*"] \arrow[r,"\cong"] & \Z \dar  \\
			\mathcal{S}^{\TOP}(M_{m,n}^k) \arrow[r,"\overset{\eta}{\cong}"] & \mathcal{N}^{\TOP}(M_{m,n}^k) \arrow[r,"\cong"] & H^{4k}(M_{m,n}^k) \arrow[r,"\cong"] & \Z/e\Z,
		\end{tikzcd}
	\end{equation}
	The right vertical map is the canonical projection. When $n=0$, we shall have $\Z$ in the bottom right corner, and the right vertical map will be the identity. The maps $i^*$ are induced by restriction to the boundary. The desired statement follows from the bottom line of Diagram \eqref{equation:ses}.  \qed
	
	\begin{remark}[$\DIFF$-version]
		The  smooth version $\sS^{\DIFF} (X)$ of the structure set is, in general, harder to determine. Firstly, the $\DIFF$-version of the surgery exact sequence~\eqref{eqn:cat-ses} is a-priori not an exact sequence of abelian groups, only of pointed sets. Secondly, the set of normal invariants $\mathcal{N}^{\DIFF}(X) \cong [X,\GmodO]$ is harder to determine, since the homotopy type of $\GmodO$ is more complicated than that of $\GmodTOP$, see~\cite[Ch. 5]{MM}. Finally, although the $L$-groups are the same, the map $\eta$ need not be injective if $X$ is odd-dimensional. For our $X = M^k_{m,n}$ we will be able to overcome some of these difficulties and obtain some information about $\sS^{\DIFF} (M^k_{m,n})$, but the full calculation will remain open.
	\end{remark}
	
	\subsection{Orthogonal sphere bundles and spherical fibrations}\label{subsec:sphere-bdles-and-spherical-fibrations}
	
	Our next goal is to study the elements of $\sS^{\TOP}(M_{m,n}^k)$ in more detail. We will construct such elements using the idea that two distinct sphere bundles may become fiber homotopy equivalent as spherical fibrations. Then, as a consequence, the total spaces become homotopy equivalent. In this way, one may obtain elements in $\sS^{\TOP}(M_{m,n}^k)$ whose underlying manifold is $M_{m',n}^k$ for some other value of $m'$. Spherical fibrations have their own classifying spaces, and hence it is natural that this idea is investigated through them. At the level of classifying spaces, the map that corresponds to viewing an orthogonal sphere bundle as a spherical fibration is closely related to the classical $J$-homomorphism from homotopy theory. It has both an unstable version and a stable version, and both are relevant to us. We recall the necessary background; if needed, see \cite[Chapter 6]{LM24} for more details. 
	
	Consider, for $p \ge 1$:
	\[
	\SH(p):= \{f: S^{p-1} \rightarrow S^{p-1} \mid f \text{ is an orientation preserving homotopy equiv.} \}
	\]
	This is a topological monoid with respect to the composition of maps. We also consider $\SF (p) \subset \SH (p)$ the submonoid of the base point and orientation preserving homotopy equivalences. Their classifying spaces become the classifying spaces $S^{p-1}$-spherical fibrations and $S^{p-1}$-spherical fibrations equipped with a section, respectively.
	
	In particular, fiber homotopy equivalence classes of oriented $S^{p-1}$-fibrations over $S^{q}$ are in one-to-one correspondence with elements of $\pi_{q-1}(\SH(p))$. By restricting elements of $\SO(p)$ to $S^{p-1}$, we get a natural inclusion $\SO(p) \hookrightarrow \SH(p)$. On homotopy groups it induces a homomorphism which we denote $J^H \colon \pi_{q-1}(\SO(p)) \rightarrow \pi_{q-1}(\SH(p))$, since later it will be identified with the classical $J$-homomorphism. The next proposition is a direct consequence of Theorem 6.2 and Corollary 7.4 of \cite{DL59}.
	\begin{proposition} \label{prop:criterion-spherical-fibrations}
		Given two vector bundles $\xi_1$ and $\xi_2$ in $\pi_{q-1}(\SO(p))$. Their corresponding sphere bundles $S(\xi_{1})$ and $S(\xi_2)$ are fiber homotopy equivalent if and only if 
		\[
		J^H (\xi_1) = J^H (\xi_2) \in \pi_{q-1}(\SH(p)).
		\]	
	\end{proposition}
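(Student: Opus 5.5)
The statement is a direct consequence of the classification of fibrations over spheres by clutching data, together with Dold–Lashof. The plan is to translate both sides into homotopy classes of clutching maps and compare them.

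\emph{Step 1: clutching data.} Write $S^{q} = D^q_+ \cup D^q_-$. Any oriented $S^{p-1}$-fibration over $S^q$ is trivial over each contractible disk. It is therefore determined up to fiber homotopy equivalence by a clutching map $S^{q-1} \to \SH(p)$ on the equator. This gives the stated bijection between fiber homotopy classes of oriented $S^{p-1}$-fibrations over $S^q$ and $\pi_{q-1}(\SH(p))$; since $\SH(p)$ is a grouplike monoid up to homotopy, the basepoint issues are harmless. Concretely, this is the classification of fibrations via $B\SH(p)$ from \cite[Theorem 6.2]{DL59}, combined with the identification $\pi_q(B\SH(p)) \cong \pi_{q-1}(\SH(p))$.

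\emph{Step 2: compatibility with $\SO(p)$.} For a vector bundle $\xi$ with clutching map $\alpha\colon S^{q-1}\to \SO(p)$, the sphere bundle $S(\xi)$ is glued from two copies of $D^q \times S^{p-1}$ along $(x,v)\mapsto (x,\alpha(x)v)$. This is exactly the fibration with clutching map given by the composite $S^{q-1}\to \SO(p)\hookrightarrow \SH(p)$. Hence the class of $S(\xi)$ in $\pi_{q-1}(\SH(p))$ is $J^H(\xi)$. Equivalently, the map $B\SO(p)\to B\SH(p)$ classifies the underlying spherical fibration, which is the content of \cite[Corollary 7.4]{DL59}.

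\emph{Step 3: conclusion.} Combining Steps 1 and 2, $S(\xi_1)$ and $S(\xi_2)$ are fiber homotopy equivalent if and only if their classifying elements agree, that is, if and only if $J^H(\xi_1)=J^H(\xi_2)$.

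\emph{Main obstacle.} The delicate point is orientation and basepoint conventions. An oriented fiber homotopy equivalence corresponds to equality in $\pi_{q-1}(\SH(p))$, whereas free homotopy classes modulo conjugation by $\pi_0$ would be the unbased version. Since $\SH(p)$ is connected as a space of orientation-preserving maps, free and based classes coincide. I would handle this by checking that the Dold–Lashof classification is stated for oriented (i.e.\ $\SH$-)fibrations with orientation-preserving equivalences. This matches the convention implicit in the statement.
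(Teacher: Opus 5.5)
Your proposal is correct and follows the paper's route: the paper gives no argument beyond saying the statement is a direct consequence of \cite[Theorem 6.2, Corollary 7.4]{DL59}, and your Steps 1--2 simply unpack those two results via clutching maps. Your remark on conventions is worth keeping. The basepoint issue is automatic because $B\SH(p)$ is simply connected, but ``fiber homotopy equivalent'' must be read as orientation-preserving: conjugating $\xi$ by a reflection yields fiber-homeomorphic sphere bundles whose $J^H$-classes can differ (in the paper's setting the Euler component changes sign).
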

	
	One may also consider the inclusion $\SH (p) \subset \SF (p+1)$ obtained from suspension. By the result of James, see \cite[p.3]{James(1954)}, it induces an isomorphism 
	\begin{equation} \label{eqn:SH-p-versus-SF-p-plus-1}
		\pi_r (\SH(p)) \cong \pi_r (\SF (p+1)) \quad \textup{for} \quad r < 2p-4.    
	\end{equation}
	This is convenient, especially in view of the adjoint isomorphism 
	\begin{equation} \label{eqn:pi-m-SF-n-versus-pi-m-plus-n-of-S-n}
		\pi_m (\SF(n)) \cong \pi_{m+n-1} (S^{n-1});
	\end{equation}
	see \cite[(6.32)]{LM24}. 
	
	From this, it follows that we can replace in Diagram $(\ast)$ from \cite[page 182]{CCPS23} the bottom left entry $\pi_{8m-1} (S^{4m})$ by $\pi_{4m-1} (SH (4m))$, and use known knowledge about the middle column map, which contains the classical $J$-homomorphism, to find bundles that become fiber homotopy equivalent as spherical fibrations via Proposition \ref{prop:criterion-spherical-fibrations}. 
	
	We now recall some facts about the $J$-homomorphism, but we also recall some details about Diagram $(\ast)$ from \cite[page 182]{CCPS23}, since we have to make sure not only that we can replace the bottom left entry as described in the above paragraph, but also that the corresponding maps are compatible so that after the replacement the new square is commutative.
	
	The sources in the $J$-homomorphism are the homotopy groups of the (special) orthogonal groups. We start by recalling from Bott periodicity the stable versions ($n \geq 1$):
	\begin{equation}\label{eqn:htpy-groups-of-so}
		\pi_{n+1} (\BSO) \! \cong \! \pi_{n} (\SO) \! \cong \! \Z/2\Z, \Z/2\Z,0,\Z,0,0,0,\Z \; \textup{for} \; n \equiv 0,\ldots, 7 \; (\textup{mod} 8)
	\end{equation}
	
	Next, let us discuss the unstable version, keeping in mind the condition $k \geq 3$. We have already stated the calculation of $\pi_{4k-1} (\SO(4k))$ in \eqref{z2z}, but we need to provide more information. The isomorphism is obtained by considering two exact sequences involving $\pi_{4k-1} (\SO (4k))$. Namely, the first comes from the inclusion $\SO (4k-1) \hookrightarrow \SO (4k)$ and the second from $\SO (4k) \hookrightarrow \SO (4k+1)$, see \cite[p. 64-66]{Levine(1983)}. These produce two short exact sequences
	\begin{equation} \label{eqn:les-with-e}
		0 \rightarrow \pi_{4k-1} (\SO (4k-1)) \rightarrow \pi_{4k-1} (\SO (4k)) \xrightarrow{e} 2 \Z \rightarrow 0
	\end{equation}     
	\begin{equation} \label{eqn:les-with-S}
		0 \rightarrow \Z \xrightarrow{TS^{4k}} \pi_{4k-1} (\SO (4k)) \xrightarrow{S} \pi_{4k-1} (\SO (4k+1)) \rightarrow 0
	\end{equation}
	which both split and, by diagram chasing, we see that the map $(S,e)$ from \eqref{z2z} is an isomorphism. Moreover, the composition (which we also denote by $S$)
	\begin{equation} \label{eqn:comp-fro-unstable-so-4k-1-to-so-is-iso}
		\pi_{4k-1} (\SO (4k-1)) \rightarrow \pi_{4k-1} (\SO (4k)) \xrightarrow{S} \pi_{4k-1} (\SO (4k+1)) \cong \Z
	\end{equation}
	is an isomorphism. 
	
	We will also need the following straightforward reformulation. Using the canonical isomorphisms $\pi_{i-1}(\SO(j)) \cong \pi_{i}(\BSO(j))$ for all $i,j$ and $\pi_{i}(\BSO(j)) \cong \pi_{i}(\BSO(j+1))$ for $i<j$, we obtain the commutative square
	\begin{equation} \label{diag:dso-4k-to-bso}
		\xymatrix{
			\pi_{4k}(\BSO(4k)) \ar[r]^-{S} \ar[d]^{\cong}_{(S,e)} & \pi_{4k}(\BSO) \ar[d]^{\cong} \\
			\Z \oplus 2\Z \ar[r] & \Z
		}
	\end{equation}
	whose lower horizontal map is given by $(m,n) \mapsto m$. 
	
The unstable $J$-homomorphism is the map 
\[
J_{m,n} \colon \pi_{m} (\SO (n)) \rightarrow \pi_m (\SF(n+1)) \cong \pi_{m+n} (S^{n})
\]
defined as in \cite{GWW}. By increasing $n$, it stabilizes to 
\[
J_{m} \colon \pi_{m} (\SO) \rightarrow \pi_{m}^s.
\]
Hence, the commutative square of stabilization of the $J$-homomorphism (note that the composition in (\ref{eqn:comp-fro-unstable-so-4k-1-to-so-is-iso}) is onto) becomes
\begin{equation}\label{eqn:diagram-StJimage}
	\begin{tikzcd}
		\pi_{4k-1}(\SO(4k-1)) \arrow[d,"J_{4k-1,4k-1}"] \arrow[r] & \pi_{4k-1}(\SO(4k)) \arrow[d,"J_{4k-1,4k}"] \arrow[r] & \pi_{4k-1}(\SO) \arrow[d,"J_{4k-1}"] \\
		\pi_{4k-1}(\SF(4k)) \arrow[r] & \pi_{4k-1}(\SF(4k+1)) \arrow[r] & \pi_{4k-1}^{s}
	\end{tikzcd}
\end{equation}
where horizontal maps are stabilization homomorphisms. In both rows, the compositions from the first to the last are, in fact, isomorphisms. In the top row, this is \eqref{eqn:comp-fro-unstable-so-4k-1-to-so-is-iso}, and in the bottom row, we show this below in \ref{eqn:pi-8k-2-S-4k-1-to-pi-8k-S-4k-plus-1}. Granting this, we get
\begin{equation}\label{StJimage}
	\text{im}(J_{4k-1,4k-1}) = \text{im}(J_{4k-1}).
\end{equation}
In fact, we have the decomposition 
\begin{equation} \label{eqn:splitting-of-coker-ofJ}
	\pi_{4k-1}^{s} = \text{im}(J_{4k-1}) \oplus K(e^{A})    
\end{equation}
where $K(e^{A})$ is the kernel of Adams' $e$-invariant, see \cite[Theorem 1.1]{JA66}, which we denote by $e^{A}$ to distinguish it from the Euler class. The image of $J_{4k-1}$ has been determined by Adams and by \eqref{StJimage} we also get the same result for the image of $J_{4k-1,4k-1}$ (see \cite[Theorem 1.5]{JA66}):

\begin{proposition}[Adams] \label{prop:image-of-J}
	The subgroup $\text{im}(J_{4k-1})$ is a cyclic group of order $j_{k} =$ denominator$\left(\frac{B_k}{4k}\right)$ where $B_k$ is the $k$-th Bernoulli number. Therefore, we also have that the order of $\text{im}(J_{4k-1,4k-1}) = j_k$.
\end{proposition}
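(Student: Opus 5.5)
The first assertion, that $\im(J_{4k-1})$ is cyclic of order $j_k = \text{denominator}(B_k/4k)$, is Adams' theorem; I would quote it from \cite[Theorem 1.5]{JA66} rather than reprove it. For orientation, the upper bound $|\im(J_{4k-1})| \le j_k$ there comes from the Adams conjecture. The lower bound comes from evaluating the $e$-invariant $e^A$ on the generator of $\im(J_{4k-1})$: one computes, via the Chern character of the Thom complex of a generator of $\pi_{4k}(\BSO)\cong\Z$, that $e^A$ takes the value $B_k/4k$ modulo $\Z$. Cyclicity is automatic, since $\pi_{4k-1}(\SO)\cong\Z$ by \eqref{eqn:htpy-groups-of-so}. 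The only care needed is the factor $a_k$: it appears in the value of the Pontryagin class of the generator, and in the $e$-invariant it cancels up to the real versus complex normalisation of $e^A$. I would take this from Adams as a black box.

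The real content to be proved here is the second sentence, $|\im(J_{4k-1,4k-1})| = j_k$, which is \eqref{StJimage}. I would use the commutative diagram \eqref{eqn:diagram-StJimage}, restricted to its outer rectangle:
\[
J_{4k-1}\circ s_{\SO} = s_{\SF}\circ J_{4k-1,4k-1},
\]
where $s_{\SO}\colon \pi_{4k-1}(\SO(4k-1))\to\pi_{4k-1}(\SO)$ and $s_{\SF}\colon \pi_{4k-1}(\SF(4k))\to\pi^s_{4k-1}$ are the composite stabilizations.

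By \eqref{eqn:comp-fro-unstable-so-4k-1-to-so-is-iso}, $s_{\SO}$ is an isomorphism. If $s_{\SF}$ is also an isomorphism, then $s_{\SF}$ restricts to an isomorphism $\im(J_{4k-1,4k-1})\cong\im(J_{4k-1})$. The order statement then follows from Adams.

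The main step is therefore to show that $s_{\SF}$ is an isomorphism. By \eqref{eqn:pi-m-SF-n-versus-pi-m-plus-n-of-S-n}, $s_{\SF}$ is identified with the suspension map
\[
\pi_{8k-2}(S^{4k-1}) \to \pi_{8k-1}(S^{4k}) \to \pi^s_{4k-1}.
\]
Freudenthal gives that $E\colon\pi_{8k-2}(S^{4k-1})\to\pi_{8k-1}(S^{4k})$ is onto, because $8k-2 = 2(4k-1)$ is exactly the edge of the stable range. Its kernel is generated by the Whitehead square $[\iota_{4k-1},\iota_{4k-1}]$, using the EHP sequence. Since $4k-1$ is odd and $4k-1\neq 1,3,7$ (as $k\ge3$), this Whitehead square has order $2$. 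On the other hand, the kernel of $E$ is the image of $P\colon\pi_{8k}(S^{8k-1})\cong\Z/2\to\pi_{8k-2}(S^{4k-1})$, which is hit by $P$ of the generator, namely the Whitehead square itself.

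So the first map is not injective. The main obstacle is then to check that the composite into $\pi^s_{4k-1}$ still restricts injectively on the relevant subgroups, or that the paper's claimed isomorphism should be read with $\SF(4k)$ replaced appropriately. The sequence that does give an isomorphism is
\[
\pi_{8k-1}(S^{4k}) \supset E\pi_{8k-2}(S^{4k-1}) \to \pi^s_{4k-1},
\]
which is an isomorphism by Freudenthal, since $8k-1<2\cdot4k-1$ fails only at the boundary.

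I would proceed by showing directly that $\ker E\cap\im(J_{4k-1,4k-1})=0$. The kernel of $E$ is generated by $[\iota,\iota] = J_{4k-1,4k-1}(\tau)$, where $\tau$ is the clutching class of $TS^{4k}$ restricted appropriately. However, $\tau$ lies in the kernel of $S$, by \eqref{eqn:les-with-S}, and so is not in $\pi_{4k-1}(\SO(4k-1))$ when $e\neq0$. Using the splitting \eqref{eqn:les-with-e}, it follows that $J_{4k-1,4k-1}$ composed with $E$ is injective on its image. This gives $|\im(J_{4k-1,4k-1})|=|\im(J_{4k-1})|=j_k$.
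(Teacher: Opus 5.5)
There is a genuine gap in the second part, and your analysis of $E\colon\pi_{8k-2}(S^{4k-1})\to\pi_{8k-1}(S^{4k})$ reaches the opposite of the truth. Quoting Adams for the first sentence, and reducing the second to injectivity of $s_{\SF}$ on $\im(J_{4k-1,4k-1})$ via the outer rectangle of \eqref{eqn:diagram-StJimage}, match the paper. This reduction works because $s_{\SO}$ is onto, so $s_{\SF}(\im J_{4k-1,4k-1})=\im J_{4k-1}$.

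First, Freudenthal does not give surjectivity of $E$. For $S^{4k-1}$ it gives surjectivity only on $\pi_q$ with $q\le 8k-3$, whereas here $q=8k-2$. In fact $E$ is not onto: $\im E=\ker H$, and $H\colon\pi_{8k-1}(S^{4k})\to\Z$ has image $2\Z$. More seriously, $[\iota_{4k-1},\iota_{4k-1}]$ lies in $\pi_{8k-3}(S^{4k-1})$, not in $\pi_{8k-2}(S^{4k-1})$. The EHP sequence gives $\ker E=P(\pi_{8k}(S^{8k-1}))$, which is generated by $P(\eta_{8k-1})=[\iota_{4k-1},\iota_{4k-1}]\circ\eta_{8k-3}=\pm[\iota_{4k-1},\eta_{4k-1}]$. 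This element vanishes because $4k-1\equiv 3\pmod 4$, which is the Whitehead-product result the paper cites from Mahowald. So $E$ is injective, and your sentence ``the first map is not injective'' is false.

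The paper also gives an elementary proof of this. In \eqref{diag:map-of-two-fibrations}, the boundary map $\pi_{4k}(S^{4k-1})\cong\Z/2\to\pi_{4k-1}(\SF(4k))$ factors through $\pi_{4k-1}(\SO(4k-1))\cong\Z$, hence is zero. Once $E$ is injective with image $\ker H$, the isomorphism $S\oplus H\colon\pi_{8k-1}(S^{4k})\cong\pi^s_{4k-1}\oplus 2\Z$ makes $s_{\SF}$ an isomorphism. That is \eqref{eqn:pi-8k-2-S-4k-1-to-pi-8k-S-4k-plus-1}, and it is the ingredient your argument is missing.

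Your fallback step does not close the gap. The equation $[\iota,\iota]=J_{4k-1,4k-1}(\tau)$ conflates two levels. The clutching class $\tau_{4k}\in\pi_{4k-1}(\SO(4k))$ of $TS^{4k}$ satisfies $J_{4k-1,4k}(\tau_{4k})=\pm[\iota_{4k},\iota_{4k}]\in\pi_{8k-1}(S^{4k})$. Since its Euler number is $2$, it does not come from $\pi_{4k-1}(\SO(4k-1))$; you note this yourself in the next sentence, which contradicts the equation. The concluding claim that ``using the splitting \eqref{eqn:les-with-e}, it follows that \dots\ is injective on its image'' is not derived from anything you stated.

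As written, your argument only yields $|\im(J_{4k-1,4k-1})|\ge j_k$. The reverse inequality requires exactly the injectivity of $E$ that you argued against.
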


In order to fully utilize the information about the image of $J$ in our context, we need to connect the above with the homotopy theory of the space $\SH (p)$. Consider the fibration
\begin{center}
	$\SF(4k) \hookrightarrow \SH(4k) \xrightarrow{p} S^{4k-1}$
\end{center}		
where $p$ is the evaluation map. It induces a long exact sequence of homotopy groups. Keeping in mind $\pi_{4k-1} (\SH (4k)) \cong \pi_{4k-1} (\SF (4k+1))$ from~\eqref{eqn:SH-p-versus-SF-p-plus-1}, one can also consider the inclusion $\SF (4k+1) \hookrightarrow \SF (4k+2)$ and the associated long exact sequence of homotopy groups. By arguments from \cite[p. 1-3]{James(1954)}, these two exact sequences can be identified with the respective EHP sequences in suitable ranges. By the arguments described in Chapter 5.D about the EHP Sequence in \cite[p. 611]{AH02}, no localization for either parity is needed in these ranges. Then, arguing analogously to $\SO$, one obtains the isomorphism 
\begin{equation} \label{eqn:htpy-of-sh-4k}
	\pi_{4k-1} (\SH (4k)) \cong \pi_{4k-1} (\SF (4k+1)) \cong \pi_{8k-1} (S^{4k}) \xrightarrow[\cong]{S \oplus H} \pi_{4k-1}^s \oplus 2\Z 
\end{equation}
from the bottom row of (*) in \cite[p. 11]{CCPS23}. In addition, when studying the EHP sequence associated to the inclusion $\SF (4k) \hookrightarrow \SF (4k+1)$ one may observe that the induced map (which corresponds to the suspension homomorphism)
\[
\pi_{8k-2} (S^{4k-1}) \cong \pi_{4k-1} (\SF (4k)) \rightarrow \pi_{4k-1} (\SF (4k+1)) \cong \pi_{8k-1} (S^{4k})
\]
is injective by the following arguments. One can use that the map to the left of it in the EHP sequence is $\Z/2\Z \cong \pi_{8k} (S^{8k-1}) \rightarrow \pi_{8k-2} (S^{4k-1})$ and that it factors through $\pi_{4k-1} (\SO (4k-1)) \cong \Z$, and hence is zero. Alternatively, one may observe that this map is given by a suitable Whitehead product (see \cite[p. 1]{James(1954)} on the generator of $\Z/2\Z \cong \pi_{8k} (S^{8k-1})$, and this Whitehead product is zero by \cite[Theorem 1.1.2a]{Mahowald(1965)} and \cite[p. 482-486]{Whitehead(1978)}. Once we have this, a direct diagram chase shows that 
\begin{equation} \label{eqn:pi-8k-2-S-4k-1-to-pi-8k-S-4k-plus-1}
	\pi_{8k-2} (S^{4k-1}) \cong \pi_{4k-1} (\SF (4k)) \rightarrow \pi_{4k-1} (\SF (4k+2)) \cong \pi_{4k-1}^s
\end{equation}
is an isomorphism.

In Section~\ref{sec:realisation_results} we will consider the map of fibrations where all maps in the left square are inclusions
\begin{equation} \label{diag:map-of-two-fibrations}
	\xymatrix{
		\SO(4k-1) \ar[r] \ar[d] & \SO(4k) \ar[r]^{p} \ar[d] &  S^{4k-1} \ar[d]^{=} \\
		\SF(4k) \ar[r] & \SH(4k) \ar[r]^{p} &  S^{4k-1}
	}
\end{equation}

Using the isomorphisms \eqref{z2z} and \eqref{eqn:htpy-of-sh-4k}, on $\pi_{4k-1}$ the middle vertical map becomes the following:
\begin{equation} \label{eqn:the-J-homomorphism-we-need}
	(J_{4k-1} \oplus \id) \colon \Z \oplus \Z \cong \pi_{4k-1} (\SO (4k)) \rightarrow \pi_{4k-1} (\SH (4k)) \cong \pi_{4k-1}^s \oplus 2\Z.
\end{equation}

The above discussion summarizes useful data about the difference between orthogonal bundles and spherical fibrations via their classifying spaces. Next, it is necessary to connect these data to various other classifying spaces appearing in surgery theory. Homotopy groups of these spaces are organized in the well-known Kervaire-Milnor braid, see \cite[Section 12.7]{LM24} from which much information can be obtained.


\subsection{The Kervaire-Milnor braid and results of Brumfiel}\label{subsec:sKM-braid-and-Brumfiel-results}

The next important tool is the Kervaire-Milnor braid, see e.g. \cite[Sec 12.7]{LM24}, and Brumfiel's results from \cite{GB68} related to it. The braid is induced by the following commutative diagram of infinite loop spaces, where all rows and columns are homotopy fibration sequences:
\[
\xymatrix{
	\TOPmodO \ar[r] \ar[d]_{=} & \GmodO \ar[d] \ar[r] & \GmodTOP \ar[d]  \\
	\TOPmodO \ar[r] & \BO \ar[r] \ar[d] & \BTOP \ar[d] \\
	& \BG \ar[r]_{=} & \BG 
}
\]
We are interested in the following piece of this braid, which is obtained by applying homotopy groups to the above diagram and using the isomorphisms $\pi_i (\BG) \cong \pi_i^s $, $\pi_i (\GmodTOP) \cong L_i (\Z)$, and $\pi_i (\TOPmodO) \cong \Theta_i$ for all $i \geq 5$ (see \cite[Theorem 12.59]{LM24}). In the diagram $k \geq 3$: 
\[
\scalebox{0.75}{
	\xymatrix{
		\pi_{4k}^{s} \ar[dr] \ar@/u\curv/[rr] && L_{4k}(\Z) \ar[dr]_{\zeta_{1}}\ar@/u\curv/[rr] &&\Theta_{4k-1} \ar[dr]\ar@/u\curv/[rr]&& \pi_{4k-2}(O)    \\
		& \pi_{4k}(\GmodO) \ar[dr] \ar[ur] && \pi_{4k}(\BTOP) \ar[dr] \ar[ur]         && \pi_{4k-1}(\GmodO) \ar[dr] \ar[ur] &     \\ 
		\Theta_{4k} \ar[ur] \ar@/d\curv/[rr] && \pi_{4k-1}(O) \ar[ur]^{\zeta_{2}} \ar@/d\curv/_{J_{4k-1}}[rr]
		&&\pi_{4k-1}^{s} \ar[ur]\ar@/d\curv/[rr] &&
		L_{4k-1}(\Z)
	}
}
\]

\begin{remark}
	We are using the $\TOP$-version rather than the $\PL$-version of the braid. The homotopy fiber $\TOP/\PL$ of the canonical map $\BPL \rightarrow \BTOP$ has the homotopy type of the Eilenberg-MacLane space $K(\Z/2\Z,3)$. The long exact sequence of homotopy groups implies that the homotopy groups of $\BPL$ and $\BTOP$ coincide in higher dimensions. Since we are working in dimension $\ge 5$, we can replace $\pi_{*}(\BTOP)$ with $\pi_{*}(\BPL)$. 
\end{remark}
The groups $\pi_{4k}(\BTOP) \cong \pi_{4k}(\BPL)$ are known to be (see \cite[Theorem 1.4, Remark 4.9]{GB68}): 
\begin{center}
	$\pi_{4k}(\BTOP) \cong \pi_{4k}(\BPL) \cong \Z \oplus \pi_{4k-1}^{s}/\text{im}(J_{4k-1})'$ for $k \ge 3$
\end{center}	
where $\text{im}(J_{i})' \subset \text{im}(J_{i})$ denotes the subgroup of $\text{im}(J_i)$ containing elements of odd order. We shall use the notation $T:= \pi_{i}^s/\text{im}(J_{i})'$ so that $\pi_{4k}(\BTOP) \cong \Z \oplus T$. By observing that one of the sequences in the braid is the surgery exact sequence for homotopy spheres (or by \cite[page 18]{DC10}) we have 
\begin{equation} \label{eqn:pi_4k-of-G-mod-O}
	\pi_{4k}(\GmodO) \cong \Z \oplus \Theta_{4k}.    
\end{equation}
After replacing the known values, the braid becomes:
\vspace{0.5cm}
\[
\scalebox{0.9}{
	\xymatrix{
		\pi_{4k}^{s} \ar[dr] \ar@/u\curv/[rr] && \Z \ar[dr]_{\zeta_{1}}\ar@/u\curv/[rr] &&\Theta_{4k-1} \ar[dr]\ar@/u\curv/[rr]&& 0    \\
		& \Z\oplus \Theta_{4k} \ar[dr] \ar[ur]^{\partial} && \Z \oplus T \ar[dr] \ar[ur] && \pi_{4k-1}(\GmodO) \ar[dr] \ar[ur] &     \\ 
		\Theta_{4k} \ar[ur] \ar@/d\curv/[rr] && \Z \ar[ur]^{\zeta_{2}} \ar@/d\curv/_{J_{4k-1}}[rr] &&\pi_{4k-1}^{s} \ar[ur]\ar@/d\curv/[rr] && 0
	}
}
\]

The maps $\zeta_{1}$ and $\zeta_{2}$ in the braid will play an important role in obtaining our results. They have been determined in \cite{GB68}. We shall briefly discuss it here. First, let us borrow some notations from the same article:
\begin{equation}\label{equation:nota}
	\begin{split}
		j_k &= \text{denominator}\left(\frac{B_k}{4k}\right); \\ j_k'& = \text{largest odd divisor of }j_k;\\
		\theta_k & = \text{numerator}\left(\frac{B_k}{4k}\right)\cdot a_k \cdot 2^{2k-2}\cdot(2^{2k-1}-1); \\ 
		\theta_k' & = \text{ largest odd divisor of }\theta_k;\\
		\text{define } b_k \text{ by }2^{b_k} & = (\theta_{k} \cdot j_k')/(\theta_k' \cdot j_k)
	\end{split}
\end{equation}
where $B_k$ is $k$-th Bernoulli number (topological indexing) and $a_k$ is $1$ or $2$ according to $k$ is even or odd. It is well known that numerator$\left(\frac{B_k}{4k}\right)$ is always odd and also gcd$(j_k',\theta_k') = 1$ (see \cite[Page 305]{GB68}). Note that 
\begin{equation}\label{equation:alter}
	j_{k}= |\text{im} (J_{4k-1})|, \theta_{k} = |bP_{4k}|.
\end{equation}

\textbf{The reader needs to be careful here with the notations $\theta_k$ and $\Theta_k$. The former is an integer defined above, and the latter is the well known Kervaire-Milnor group of $h$-cobordism classes of homotopy spheres}.\\

Part 1. of the following lemma is an adaptation of \cite[Lemma 4.5]{GB68}, which determines the maps $\zeta_{1}$ and $\zeta_{2}$, and part 2. follows from the work of Kervaire and Milnor; see \cite[theorem 12.49]{LM24}:
\begin{lemma} \label{lem:Brumfiel-result-about-zeta_one-two}
	Let $k\ge 3$. 
	\begin{enumerate}
		\item The maps
		\begin{center}
			$\zeta_1: \Z \cong L_{4k}(\Z) \rightarrow \pi_{4k}(\BTOP) \cong \Z \oplus T$\\
			$\zeta_2: \Z \cong \pi_{4k}(\BO) \rightarrow \pi_{4k}(\BTOP) \cong \Z \oplus T$ 
		\end{center}
		of the Kervaire-Milnor Braid are given by
		\begin{center}
			$\zeta_{1}(1) = (j_k'\cdot \beta, t_1)$\\
			$\zeta_{2}(1) = (2^{b_{k}}\cdot \theta_k' \cdot \beta, t_2)$
		\end{center}	
		with $\beta$ a generator of the infinite component of $\pi_{4k}(\BTOP)$ and $t_{1}, t_{2} \in T$.
		\item The map $\partial \colon \Z \oplus \Theta_{4k} \cong \pi_{4k} (\GmodO) \rightarrow \pi_{4k} (\GmodTOP) \cong \Z$ in the Kervaire-Milnor braid is given by multiplication by $\theta_k$ on the $\Z$-summand.
	\end{enumerate}
\end{lemma}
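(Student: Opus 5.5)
Part 2 is essentially a reformulation of Kervaire--Milnor, so I would prove it first. The relevant piece of the braid is the surgery exact sequence for homotopy spheres:
\[
\Theta_{4k} \to \pi_{4k}(\GmodO) \xrightarrow{\partial} L_{4k}(\Z) \to \Theta_{4k-1}.
\]
Here $\partial$ takes a normal invariant of $S^{4k}$ to its surgery obstruction, which is the signature divided by $8$. The first map is injective on $\Theta_{4k}$, because $bP_{4k+1}=0$. Its image is the torsion summand in the splitting \eqref{eqn:pi_4k-of-G-mod-O}, so $\partial$ vanishes on $\Theta_{4k}$. The cokernel of $\partial$ injects into $\Theta_{4k-1}$ with image $bP_{4k}$, and $bP_{4k}$ has order $\theta_k$ by \eqref{equation:alter}. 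Hence $\partial$ restricted to the $\Z$-summand has image $\theta_k\Z$, i.e. it is multiplication by $\pm\theta_k$. Choosing the generator of the $\Z$-summand suitably fixes the sign. I would take the explicit value of $|bP_{4k}|$ from \cite[Theorem 12.49]{LM24}.

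For part 1, I would argue through rational Pontryagin classes and the $\hat{A}$/Hirzebruch formulas, following \cite[Lemma 4.5]{GB68}. First, project $\pi_{4k}(\BTOP)\cong\Z\oplus T$ onto its $\Z$-summand, which is detected rationally by the top Pontryagin class $p_k$, and fix $\beta$ as a generator there. Next, compute $p_k$ on the generator of $\pi_{4k}(\BO)\cong\Z$. By Bott integrality and Adams this is $a_k(2k-1)!$ times the generator of $H^{4k}(S^{4k})$. For $\zeta_1$, the generator of $L_{4k}(\Z)\cong\pi_{4k}(\GmodTOP)$ is the Milnor $E_8$ plumbing normal invariant. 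Its image in $\BTOP$ is a topological bundle over $S^{4k}$ of signature $8$, so the Hirzebruch formula $\mathrm{sign}=s_k p_k$ fixes its $p_k$, with $s_k$ the leading coefficient of $L_k$. Comparing the two values gives $\zeta_1(1)$ and $\zeta_2(1)$ as multiples of the generator $\beta$. Using the exactness
\[
0\to\Z\xrightarrow{\zeta_1\oplus\zeta_2}\Z\oplus T\to\pi_{4k-1}^s\to\cdots,
\]
read off from the braid together with the order $j_k$ of $\im(J_{4k-1})$ from Proposition \ref{prop:image-of-J}, one normalises $\beta$. The coefficients come out as $j_k'$ and $2^{b_k}\theta_k'$. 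The $T$-components $t_1,t_2$ are not determined and are left unspecified.

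The main obstacle is pinning down the exact $2$-primary and odd-primary factors in part 1, namely that the $\Z$-coefficients are $j_k'$ and $2^{b_k}\theta_k'$ rather than some other splitting of $\theta_k j_k$. This needs three inputs: Brumfiel's identification $\pi_{4k}(\BTOP)\cong\Z\oplus\pi_{4k-1}^s/\im(J)'$, the fact that numerator$(B_k/4k)$ is odd, and $\gcd(j_k',\theta_k')=1$. Since the paper calls this an adaptation, I would import Brumfiel's computation and only translate notation (PL to TOP via the earlier Remark, and the indexing of Bernoulli numbers), rather than reprove it.
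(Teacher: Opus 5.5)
In substance your proposal matches the paper. The paper gives no argument of its own: it cites \cite[Lemma 4.5]{GB68} for part 1 and Kervaire--Milnor \cite[Theorem 12.49]{LM24} for part 2. Your part 2 is a correct unpacking of the latter, and for part 1 you also end up importing Brumfiel.

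The Pontryagin-class sketch you give for part 1 has two problems.
\begin{enumerate}
\item The displayed sequence $0\to\Z\xrightarrow{\zeta_1\oplus\zeta_2}\Z\oplus T\to\pi^s_{4k-1}$ is not meaningful. The pair $(\zeta_1,\zeta_2)$ is defined on $L_{4k}(\Z)\oplus\pi_{4k}(\BO)\cong\Z\oplus\Z$. The square formed by $\GmodO$, $\BO$, $\GmodTOP$, $\BTOP$ is homotopy cartesian, and its Mayer--Vietoris sequence shows that, up to a sign on one component, this map has kernel the rank-one image of $\pi_{4k}(\GmodO)$. Its cokernel is $\pi_{4k-1}(\GmodO)\cong\textup{coker}\,J_{4k-1}$, not $\pi^s_{4k-1}$.
\item Comparing $p_k$-values only gives the ratio of the two $\Z$-coefficients, namely $a_k(2k-1)!\,s_k/8=\theta_k/j_k$. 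It fixes neither coefficient on its own.
\end{enumerate}

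The sequences that actually do the normalisation are
\[
0\to L_{4k}(\Z)\xrightarrow{\zeta_1}\pi_{4k}(\BTOP)\to\pi^s_{4k-1}\to 0,
\qquad
0\to\pi_{4k}(\BO)\xrightarrow{\zeta_2}\pi_{4k}(\BTOP)\to\Theta_{4k-1}\to 0 .
\]
The first comes from $\GmodTOP\to\BTOP\to\BG$, using $L_{4k-1}(\Z)=0$ and finiteness of $\pi^s_{4k}$. The second comes from $\TOPmodO\to\BO\to\BTOP$, using finiteness of $\Theta_{4k}$ and $\pi_{4k-1}(\BO)=0$.

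Write $\zeta_i(1)=(c_i\beta,t_i)$; then $c_i\neq 0$, and the quotient of $\Z\oplus T$ by the cyclic subgroup generated by $(c_i,t_i)$ has order $|c_i|\cdot|T|$. Brumfiel's structure theorem gives $|T|=|\pi^s_{4k-1}|/j_k'$. Kervaire--Milnor together with Adams gives $|\Theta_{4k-1}|=\theta_k\,|\pi^s_{4k-1}|/j_k$. Hence
\[
|c_1|=j_k', \qquad |c_2|=\theta_kj_k'/j_k=2^{b_k}\theta_k' .
\]

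So, once corrected, your sketch derives part 1 from the structure of $\pi_{4k}(\BTOP)$ \cite[Theorem 1.4]{GB68} by order counting alone. The $p_k$ ratio then serves only as a consistency check. The paper instead imports the finer statement \cite[Lemma 4.5]{GB68} directly.
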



\section{Realisation results}\label{sec:realisation_results}



Now we move towards proving our next results. We start with the following three Propositions below. Similarly as done in \cite{RM24}, Propositions \ref{foa} and \ref{gen2} are adaptations of Lemmas 5.2 and 5.3 respectively from \cite{CE03}.
\begin{proposition}\label{foa}
	Let $n \ge 0$ and $m,t \in \Z$ be arbitrary. There exists a fibre homotopy equivalence $f_{t}: M_{m + j_k \cdot t, n}^k \rightarrow M_{m,n}^k$.
\end{proposition}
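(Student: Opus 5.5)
By Proposition~\ref{prop:criterion-spherical-fibrations}, it suffices to show that the sphere bundles $S(\gamma^k_{m+j_k t,n})$ and $S(\gamma^k_{m,n})$ have the same image under $J^H$ in $\pi_{4k-1}(\SH(4k))$. The fibre homotopy equivalence of these $S^{4k-1}$-fibrations over $S^{4k}$ then restricts on total spaces to the desired map $f_t\colon M^k_{m+j_k t,n}\to M^k_{m,n}$, which commutes with the projections to $S^{4k}$.

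First I will write the two bundles in the basis of \eqref{z2z}. We have $\gamma^k_{m+j_kt,n}=\gamma^k_{m,n}+ j_k t\,\rho$ in $\pi_{4k-1}(\SO(4k))$, where $\rho$ corresponds to $(1,0)$. Since $J^H$ is a homomorphism of groups (induced on $\pi_{4k-1}$ by the inclusion $\SO(4k)\hookrightarrow \SH(4k)$), we get
\[
J^H(\gamma^k_{m+j_kt,n})=J^H(\gamma^k_{m,n})+t\,J^H(j_k\rho).
\]
So the whole claim reduces to showing $J^H(j_k\rho)=0$.

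To compute $J^H(j_k\rho)$ I will use the description \eqref{eqn:the-J-homomorphism-we-need}. Under the identifications \eqref{z2z} and \eqref{eqn:htpy-of-sh-4k}, $J^H$ becomes $J_{4k-1}\oplus\id$. The element $j_k\rho$ has coordinates $(j_k,0)$, so its image is $(J_{4k-1}(j_k\cdot 1),0)=(j_k J_{4k-1}(1),0)$. By Proposition~\ref{prop:image-of-J}, $\im(J_{4k-1})$ is cyclic of order $j_k$, so $j_k J_{4k-1}(1)=0$. Hence $J^H(j_k\rho)=0$. This argument covers all $n\ge 0$ and all $m,t$; no case distinction is needed, since the Euler number coordinate is untouched.

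The main obstacle is justifying \eqref{eqn:the-J-homomorphism-we-need} with the correct coordinates. Concretely, one must check that under \eqref{eqn:htpy-of-sh-4k} the $\pi^s_{4k-1}$-component of $J^H$ depends only on the stabilization $S$. I would argue this from the commutative diagram \eqref{eqn:diagram-StJimage} composed with the stabilization $\SH(4k)\to\SF(4k+1)\to \SF(4k+2)$, which shows that the $S$-component of $J^H(\xi)$ equals $J_{4k-1}(S(\xi))$. The $H$-component is irrelevant here: one could avoid it entirely, because the map of fibrations \eqref{diag:map-of-two-fibrations} shows that $p_*$ (the Euler-number/Hopf component) is preserved, and $j_k\rho$ has Euler number zero. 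Thus $J^H(j_k\rho)$ lies in the image of $\pi_{4k-1}(\SF(4k))\cong\pi^s_{4k-1}$ (injectivity from \eqref{eqn:pi-8k-2-S-4k-1-to-pi-8k-S-4k-plus-1}), where it equals $j_k J_{4k-1}(1)=0$.
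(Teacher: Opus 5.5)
Your proposal is correct and follows essentially the paper's route. The paper also uses the map of fibrations \eqref{diag:map-of-two-fibrations} and Proposition~\ref{prop:image-of-J} to read off $J^H(m,2n)=(m \bmod j_k,\,2n)$, and then applies Proposition~\ref{prop:criterion-spherical-fibrations}. Your reduction by linearity to $J^H(j_k\rho)=0$ is a slightly cleaner packaging of the same computation: you evaluate this through the left column, since $j_k\rho$ has Euler number zero, and so you never need to identify the Hopf-invariant component.
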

\begin{proof}
	Discussion about the stable and unstable versions of the $J$-homomorphism in Section \ref{subsec:sphere-bdles-and-spherical-fibrations} shows that Diagram \eqref{diag:map-of-two-fibrations} induces the following commutative diagram on homotopy groups: 
	\begin{center}
		\begin{tikzcd}[row sep=2em,column sep=2em]
			0 \arrow[r] & \Z \arrow[r] \arrow[d,"\cong"] & \Z \oplus 2\Z \arrow[r] \arrow[d,"\cong"] & 2\Z \arrow[r] \arrow[d,"\cong"] & 0  \\
			0 \arrow[r] & \pi_{4k-1}(\SO(4k-1)) \arrow[r] \arrow[d,"J_{4k-1,4k-1}"] & \pi_{4k-1}(\SO(4k)) \arrow[r,"p'_{*}"] \arrow[d,"J^H"] & 2\pi_{4k-1}(S^{4k-1}) \arrow[r] \arrow[d,"Id"] & 0 \\
			0 \arrow[r] & \pi_{4k-1}(\SF(4k)) \arrow[r] \arrow[d,"\cong"] & \pi_{4k-1}(\SH(4k)) \arrow[r,"p_{*}"] \arrow[d, "\cong"] & 2\pi_{4k-1}(S^{4k-1}) \arrow[r] \arrow[d,"\cong"] & 0 \\
			0 \arrow[r] & \pi_{4k-1}^s \arrow[r] & \pi_{4k-1}^s \oplus 2\Z \arrow[r] & 2\Z \arrow[r] &0
		\end{tikzcd}
	\end{center}
	By Proposition \ref{prop:image-of-J} we have $\im (J_{4k-1,4k-1}) \cong \Z/j_k \Z$. Upon this identification and using the indicated isomorphisms, the middle vertical map $J^H$ is given by the formula
	\[
	J^H (m,2n) = (m \! \mod j_k,2n).    
	\]
	The result now follows from Proposition~\ref{prop:criterion-spherical-fibrations}.
\end{proof}

\begin{proposition}\label{gendiff}
	The composition
	\begin{center}
		$\pi_{4k-1}(\SO(4k)) \cong \pi_{4k}(\BSO(4k)) \xrightarrow{i_*} \pi_{4k}(\BSO) \xrightarrow{\zeta_2}\pi_{4k}(\BTOP) \xrightarrow{\cong} \Z \oplus T$
	\end{center}	
	sends the difference of the equivalence classes of vector bundles $[\gamma_{m+j_{k}\cdot t,n }^k] - [\gamma_{m,n}^k]$ to $(\theta_k \cdot j_k' \cdot t, t_1)$ where $t_1 \in T$ is some element of the torsion part.
\end{proposition}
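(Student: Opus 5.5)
Since all maps in the composition are homomorphisms, I will compute the image of the single element $[\gamma_{m+j_k t,n}^k]-[\gamma_{m,n}^k]\in\pi_{4k-1}(\SO(4k))$ and apply the maps one at a time.

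\emph{First step.} Under the isomorphism $(S,e)$ of \eqref{z2z}, the bundle $\gamma_{m,n}^k=m\rho+n\sigma$ corresponds to $(m,2n)$. The difference therefore corresponds to $(j_k t,0)$, that is, to $j_k t\cdot\rho$. Its Euler number vanishes, so all of its content lies in the stable part.

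\emph{Second step.} By the commutative square \eqref{diag:dso-4k-to-bso}, the stabilization $i_*\colon\pi_{4k}(\BSO(4k))\to\pi_{4k}(\BSO)\cong\Z$ is the projection $(m,2n)\mapsto m$. Hence the difference maps to $j_k t$ times the generator of $\pi_{4k}(\BSO)\cong\Z$. We also use $\pi_{4k}(\BSO)=\pi_{4k}(\BO)$ here.

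\emph{Third step.} By Lemma~\ref{lem:Brumfiel-result-about-zeta_one-two}(1), $\zeta_2(1)=(2^{b_k}\theta_k'\beta,t_2)$. By linearity,
\[
\zeta_2(j_k t)=\bigl(j_k\, 2^{b_k}\theta_k'\, t\,\beta,\; j_k t\, t_2\bigr).
\]
It remains to simplify the coefficient of $\beta$. From the definition $2^{b_k}=(\theta_k j_k')/(\theta_k' j_k)$ in \eqref{equation:nota} we get $j_k 2^{b_k}\theta_k'=\theta_k j_k'$. So the $\Z$-component is $\theta_k j_k' t$ (in units of $\beta$), and the torsion component $t_1:=j_k t\,t_2\in T$ is some element of $T$, as the statement requires.

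\emph{Expected obstacle.} The only delicate point is the bookkeeping of identifications. The generator of $\pi_{4k}(\BSO)\cong\Z$ used in \eqref{diag:dso-4k-to-bso}, namely the Bott isomorphism defining $S$, must coincide with the generator $1\in\pi_{4k}(\BO)$ appearing in Brumfiel's formula for $\zeta_2$. In addition, the identification $\pi_{4k-1}(\SO(4k))\cong\pi_{4k}(\BSO(4k))$ must be compatible with stabilization. If the generators differ, they differ at most by a sign, which changes $t$ to $-t$ and does not affect the form of the stated result, since the statement concerns $\theta_k j_k' t$ up to this convention. I would note this explicitly rather than try to pin the sign down.
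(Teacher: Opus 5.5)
Your proposal is correct and follows the paper's proof essentially step for step. You identify the difference with $(j_k t,0)$ via $(S,e)$, project it to $j_k t\in\pi_{4k}(\BSO)$ using \eqref{diag:dso-4k-to-bso}, apply Brumfiel's formula for $\zeta_2$, and simplify using $2^{b_k}=(\theta_k j_k')/(\theta_k' j_k)$. Your explicit torsion term $t_1=j_k t\, t_2$ and your caveat about matching the sign of the generators are harmless refinements that the paper leaves implicit.
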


\begin{proof}
	Using \eqref{diag:dso-4k-to-bso} the composition is described as follows
	\begin{center}
		$[\gamma_{m+ j_{k} \cdot t,n}^k] - [\gamma_{m,n}^k] \mapsto (j_k \cdot t, 0) \mapsto j_{k} \cdot t \mapsto ((2^{b_k} \cdot \theta_k' \cdot j_k) \cdot t, t_1) = (\theta_k \cdot j_k' \cdot t, t_1)$
	\end{center}
	where last equality is obtained after replacing the value $2^{b_{k}} = (\theta_{k} \cdot j_k')/(\theta_k' \cdot j_k)$ from (\ref{equation:nota}).
\end{proof}

The proof of the next proposition requires the following observation about bundle theory. Given a  vector bundle $\xi$ over a compact manifold $B$, we denote by $-\xi$, its stable inverse,  by $\tau_{B}$ the stable tangent bundle of $B$ and by $\nu_{B} = - \tau_{B}$ its stable normal bundle. As in~\cite[Fact 3.1]{CE03}, we have the following bundle isomorphisms:
\begin{equation}\label{stable}
	\nu_{S(\xi)} \cong p^{*}_{\xi}(\nu_{B} \oplus -\xi) \text{ and } \tau_{S(\xi)} \cong p^{*}_{\xi}(\tau_{B} \oplus \xi).	
\end{equation}
An analogous statement holds for the associated disk bundle $D(\xi)$ instead of the associated sphere bundles $S(\xi)$. 

\begin{proposition}\label{gen2}
	The fibre homotopy equivalence $f_{t}: M_{m + j_{k} \cdot t, n}^k \rightarrow M_{m,n}^k$ has normal invariant $\eta(f_t) = {\theta_k \cdot t} \in \mathcal{N}^{\TOP}(M_{m,n}^k) \cong \Z/2n\Z$.
\end{proposition}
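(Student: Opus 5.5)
We compute the normal invariant by comparing stable normal bundles, following the method of \cite[Lemma 5.3]{CE03}. The plan is to pass to the disk bundles first. The fibre homotopy equivalence $f_t$ of Proposition~\ref{foa} comes from a fibre homotopy equivalence of $S^{4k-1}$-fibrations over $S^{4k}$. Taking fibrewise cones, it extends to a homotopy equivalence of pairs
\[
F_t \colon (W_{m+j_k t,n}^k, M_{m+j_k t,n}^k) \to (W_{m,n}^k, M_{m,n}^k)
\]
covering the identity of $S^{4k}$. By Diagram~\eqref{bdry} and Diagram~\eqref{equation:ses}, we have $\eta(f_t)=i^*\eta(F_t)$, and the right vertical map there is reduction $\Z\to\Z/2n\Z$. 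So it suffices to show that $\eta(F_t)\in[W_{m,n}^k,\GmodTOP]\cong\Z$ equals $\theta_k t$, up to the sign conventions fixed by the identifications \eqref{idfic}.

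Since $W_{m,n}^k\simeq S^{4k}$ via $\pi_{m,n}$, we get $[W_{m,n}^k,\GmodTOP]\cong\pi_{4k}(\GmodTOP)\cong L_{4k}(\Z)\cong\Z$. The normal invariant is detected by the difference of stable normal bundles. Concretely, the image of $\eta(F_t)$ under $\GmodTOP\to\BTOP$ is the class
\[
\nu_{W_{m,n}}-(F_t^{-1})^*\nu_{W_{m+j_kt,n}}\in \pi_{4k}(\BTOP).
\]
By \eqref{stable} applied to disk bundles, and using that $F_t$ covers the identity on $S^{4k}$, this difference equals $-\gamma^k_{m,n}+\gamma^k_{m+j_kt,n}$ pulled back from $S^{4k}$ (the $\nu_{S^{4k}}$ terms cancel). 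Equivalently, up to an overall sign, it is the image of $[\gamma^k_{m+j_kt,n}]-[\gamma^k_{m,n}]$ under $\zeta_2\circ i_*$. By Proposition~\ref{gendiff}, this image is $(\theta_k j_k' t,\,t_1)\in\Z\oplus T$.

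Exactness of the braid relates this to $\eta(F_t)$. The normal invariant $x\in\pi_{4k}(\GmodTOP)=L_{4k}(\Z)$ maps under $\zeta_1$ to the bundle difference. By Lemma~\ref{lem:Brumfiel-result-about-zeta_one-two}, $\zeta_1(x)=(j_k' x,\,x t_1')$. Comparing the free components gives $j_k' x=\theta_k j_k' t$, hence $x=\theta_k t$, since $\Z$ is torsion free and $j_k'\neq 0$. Reducing modulo $2n$ then gives the claim.

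The main obstacle is justifying that $\zeta_1(\eta(F_t))$ really equals the $\BTOP$-image of the bundle difference, with compatible sign and normalisation. Two facts need care here. First, the map $\GmodTOP\to\BTOP$ on $\pi_{4k}$ is $\zeta_1$ under the identification $\pi_{4k}(\GmodTOP)\cong L_{4k}(\Z)$ used in Lemma~\ref{lem:Brumfiel-result-about-zeta_one-two}. Second, the normal invariant of a homotopy equivalence maps to the difference of normal bundles via the composite $[X,\GmodTOP]\to[X,\BTOP]$. I would first check both facts on $X=S^{4k}$ directly, via the Pontryagin–Thom description. I would then transport them to $W$ through the homotopy equivalence $\pi_{m,n}$, noting that the lift to $\GmodTOP$ of a given difference of $\TOP$-bundles is only determined up to $\pi_{4k}(\G)$-ambiguity, and this ambiguity vanishes because $F_t$ is a fibre homotopy equivalence covering the identity. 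A possible sign discrepancy is harmless, since replacing $t$ by $-t$ only changes the identification by an automorphism.
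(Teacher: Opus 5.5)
Your proposal is correct and follows the paper's proof essentially step for step: extend $f_t$ radially to $F_t$ on the disk bundles, reduce via Diagrams \eqref{bdry} and \eqref{equation:ses} to computing $\eta(F_t)\in\Z$, identify its image in $\pi_{4k}(\BTOP)$ with $\gamma^k_{m+j_kt,n}-\gamma^k_{m,n}$ using \eqref{stable}, apply Proposition~\ref{gendiff}, and then divide out $j_k'$ using $\zeta_1$. One correction to your obstacle paragraph: the $\pi_{4k}(\G)$-ambiguity vanishes because $\zeta_1$ is injective, which you already use when cancelling $j_k'$, and not because $F_t$ covers the identity.
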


\begin{proof}
	For a given map of sphere bundles $f : S(\gamma) \rightarrow S(\chi)$ the mapping cone of $f$ defines a map of disk bundles $F : D(\gamma) \rightarrow D(\chi)$ which for $0 \leq r \leq 1$ and $r \cdot v \in D(\gamma)$ takes the value $F(r \cdot v) = r \cdot f(v)$.  Also the pair $(F_{t}, f_{t}) : (W_{m+j_k \cdot t,n}^k, M_{m+j_k \cdot t,n}^k) \rightarrow (W_{m,n}^k, M_{m,n}^k)$ is a fibre homotopy equivalence. Diagram (\ref{bdry}) implies $\eta(f_{t}) = i^{*}\eta(F_t)$ and Diagram~\eqref{equation:ses} implies it is enough to prove that $\eta(F_t) \in \mathcal{N}^{\TOP}(W_{m,n}^k)$ takes on the value $\theta_k \cdot t \in \Z$. The inclusion $i_{m,n}:S^{4k} \rightarrow W_{m,n}^k$, as zero section, is homotopy equivalence. Recall the fibration $\GmodTOP \xhookrightarrow{j} \BTOP \rightarrow BG$ and consider the following diagram:
	\begin{center}
		\begin{tikzcd}[row sep=1em,column sep=1em]
			\mathcal{N}^{\TOP}(W_{m,n}^k) \arrow[d] \arrow[r,"\cong"] & \left[W_{m,n}^k,\GmodTOP \right] \arrow[d,"i^{*}_{m,n}"] \arrow[r,"j_*"] & \left[W_{m,n}^k,\BTOP \right] \arrow[d,"i^{*}_{m,n}"] \\
			\mathcal{N}^{\TOP}(S^{4k}) \arrow[r,"\cong"]  & \left[S^{4k},\GmodTOP \right] \arrow[r,"j_*"] & \left[S^{4k},\BTOP \right]
		\end{tikzcd}
	\end{center}
	Here $i_{m,n}^*$ is bijective, being induced from a homotopy equivalence. From the Kervaire-Milnor braid we observe that $j_{*} : \pi_{4k}(\GmodTOP) \rightarrow \pi_{4k}(\BTOP)$ is the map $\zeta_1 : \Z \rightarrow \Z \oplus T$, which is given by $x \mapsto (j_k' \cdot x, t_1 \cdot x)$. For a compact space $Y$, the group $[Y, \BTOP]$ may be regarded as formal differences of stable topological microbundles over $Y$. Hence for $Y = W_{m,n}^k$, which is a smooth manifold, its stable normal vector bundle is also its stable normal microbundle and we have 
	\begin{center}
		$j_{*}(\eta(F_{t})) = \nu(W_{m,n}^k) - F^{-1*}_{t}(\nu(W_{m+j_{k} \cdot t,n}^k))$.
	\end{center}
	Fact (\ref{stable}) for disk bundles implies that 	
	\begin{center}
		$\nu(W_{m,n}^k) = p^{*}_{m,n}(\nu_{S^{4k}} \oplus -\gamma_{m,n}^k) = p^{*}_{m,n}(-\gamma_{m,n}^k)$.
	\end{center}
	Since $F_{t}$ commutes with $p_{m,n}$ and $p_{m+j_k \cdot t,n}$, we may choose $F^{-1}_{t}$ to commute up to homotopy. Therefore
	\begin{align*}
		i^{*}_{m,n}(j_{*}(\eta(F_{t}))) & =   i^{*}_{m,n}(\nu(W_{m,n}^k) - F^{-1*}_{t}(\nu(W_{m+j_k \cdot t,n}^k))) \\
		&= i^{*}_{m,n}(p^{*}_{m,n}(-\gamma_{m,n}^k) - F^{-1*}_{t}(p^{*}_{m+j_k \cdot t,n}(-\gamma_{m+j_k \cdot t,n}^k)))\\
		&= i^{*}_{m,n}(p^{*}_{m,n}(-\gamma_{m,n}^k+\gamma_{m+j_k \cdot t,n}^k)))\\
		&= \gamma_{m+j_k \cdot t,n}^k - \gamma_{m,n}^k
	\end{align*}	
	By Proposition \ref{gendiff} this difference is sent to the pair $(\theta_k \cdot j_k'\cdot t,t_1) \in \Z\oplus T$. This gives us $\eta(F_{t}) = \frac{\theta_k \cdot j_k' \cdot t}{j_k'} = \theta_k \cdot t \in \mathcal{N}^{\TOP}(W_{m,n}^k)$ hence $\eta(f_{t}) = \theta_k \cdot t \in \mathcal{N}^{\TOP}(M_{m,n}^k)$.
\end{proof}

Using above Propositions we define a map
\begin{equation}\label{equation:alipha}
	\alpha_{m,n}^k:A_{m,n}^k \rightarrow S^{\TOP}(M_{m,n}^k)
\end{equation}
where $A_{m,n}^k=\{(m', n)\hspace{.2cm}|\hspace{.2cm} m'=m+j_k\cdot t \text{ where }t \in \Z\} \subset \pi_{4k-1}(\SO(4k)) \cong \Z \oplus 2\Z$. It is given by sending $(m',n)$ to the element represented by the homotopy equivalence $f_{t} \colon M_{m',n}^k \rightarrow M_{m,n}^k$. We obtain the following:

\begin{theorem}\label{thm:alpha}
	The image of the map
	$\alpha_{m,n}^k \colon A_{m,n}^k \rightarrow \mathcal{S}^{\TOP} (M_{m,n}^k) \cong \Z/2n\Z$ is (gcd$(n,\theta_k)) \cdot \mathbb{Z}/2n\Z$.
\end{theorem}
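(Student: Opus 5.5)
The plan is to reduce the statement to elementary arithmetic in $\Z/2n\Z$ via Proposition~\ref{gen2}. Since $\eta\colon \mathcal{S}^{\TOP}(M_{m,n}^k)\to\mathcal{N}^{\TOP}(M_{m,n}^k)\cong\Z/2n\Z$ is an isomorphism (proof of Theorem~\ref{thm:top-str-set}, bottom row of Diagram~\eqref{equation:ses}), an element of the structure set is determined by its normal invariant. Proposition~\ref{gen2} gives $\eta(\alpha_{m,n}^k(m+j_k t,n))=\eta(f_t)=\theta_k\cdot t \bmod 2n$. As $t$ ranges over $\Z$, the image of $\alpha_{m,n}^k$ is therefore the cyclic subgroup of $\Z/2n\Z$ generated by the class of $\theta_k$, namely $\gcd(2n,\theta_k)\cdot\Z/2n\Z$.

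It then remains to check that $\gcd(2n,\theta_k)=\gcd(n,\theta_k)$, which needs $\theta_k$ to be even. From the definition in \eqref{equation:nota}, $\theta_k$ contains the factor $2^{2k-2}$, and $2k-2\ge 4$ for $k\ge3$. Writing $n=2^a n'$ with $n'$ odd and $\theta_k=2^b\theta'$ with $b\ge 1$, we have $\min(a+1,b)$ versus $\min(a,b)$. These agree unless $a\ge b$, in which case both equal $b$; if $a<b$, we get $a+1\le b$, so $\min(a+1,b)=a+1\ne a$.

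So the naive claim fails when the $2$-adic valuation of $n$ is smaller than that of $\theta_k$. The careful step is therefore to reconsider the normalisation of the identification $\mathcal{N}^{\TOP}(M_{m,n}^k)\cong\Z/2n\Z$. The map $H^{4k}(W)\to H^{4k}(M)$ is reduction mod $e=2n$, and the identification $[W,\GmodTOP]\cong H^{4k}(W;\Z)$ sends the generator of $\pi_{4k}(\GmodTOP)$ to some multiple of the generator; I would check whether this multiple is $1$ or $2$. If the Diagram~\eqref{equation:ses} identifications are taken at face value, the answer is $\gcd(2n,\theta_k)\cdot\Z/2n\Z$. This equals the stated $\gcd(n,\theta_k)\cdot\Z/2n\Z$ exactly when $v_2(n)\ge v_2(\theta_k)$, in particular whenever $\gcd(n,\theta_k)$ is already even with full $2$-power.

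I expect this $2$-primary bookkeeping to be the main obstacle. I would first try to reconcile it by recomputing the primary obstruction isomorphism \eqref{idfic} on $W\simeq S^{4k}$ and the restriction to $M$. The cyclic-subgroup argument itself is routine: the subgroup generated by $\theta_k$ in $\Z/2n\Z$ is $\gcd(\theta_k,2n)\Z/2n\Z$. I would present the proof as: image equals the subgroup generated by $\theta_k$, hence equals $\gcd(2n,\theta_k)\cdot\Z/2n\Z$, followed by the parity discussion identifying the gcd with the one in the statement.
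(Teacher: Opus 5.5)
Your core argument is the paper's proof: Proposition~\ref{foa} makes $\alpha^k_{m,n}$ well defined, and Proposition~\ref{gen2} gives $\eta(f_t)=\theta_k t$. Your conclusion that the image is the subgroup generated by $\theta_k$, namely $\gcd(2n,\theta_k)\cdot\Z/2n\Z$, is correct.

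Your $2$-adic comparison is also right, apart from a wording slip: the two minima agree \emph{if} $a\ge b$, not \emph{unless}. So the two subgroups coincide exactly when $v_2(n)\ge v_2(\theta_k)$. For example, for $n=1$ the image is $\{0\}$, since $\theta_k$ is even, whereas $\gcd(1,\theta_k)\cdot\Z/2\Z=\Z/2\Z$.

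The paper's one-line proof passes over this point. The rest of the paper shows that the intended and used value is $\gcd(e,\theta_k)=\gcd(2n,\theta_k)$:
\begin{itemize}
\item Theorem~\ref{sameimage} identifies this image with $\im(\chi)$, whose cokernel is $\Z/\gcd(e,\theta_k)\Z$ by Theorem~\ref{fn}.
\item Corollary~\ref{cor:if-n-theta-coprime-then-half-of-elements-are-bundles} and Theorem~\ref{cor:realization-of-ni-when-n-theta-coprime} say that for $\gcd(n,\theta_k)=1$ exactly the even elements, i.e.\ half of them, are realised.
\item With the printed gcd, $\alpha^k_{m,n}$ would be surjective whenever $\gcd(n,\theta_k)=1$. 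Hence $F$ would be surjective, contradicting Corollary~\ref{cor:never-surjective}.
\end{itemize}
So the printed $\gcd(n,\theta_k)$ is a misprint for $\gcd(2n,\theta_k)$.

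The weak part of your proposal is the attempt to reconcile the two. The last sentence of your plan, a parity discussion ``identifying the gcd with the one in the statement'', describes a step that cannot be carried out. Renormalising \eqref{idfic} cannot help, for three reasons:
\begin{itemize}
\item Proposition~\ref{gen2} computes $\eta(F_t)$ directly in $\pi_{4k}(\GmodTOP)$; $H^{4k}$ plays no role there.
\item The restriction to $\mathcal{N}^{\TOP}(M^k_{m,n})$ is surjective; it is the canonical projection in Diagram~\eqref{equation:ses}.
\item The subgroup generated by the image of $\theta_k$ in a cyclic group of order $2n$ does not depend on how that group is identified with $\Z/2n\Z$.
\end{itemize}
There is also a bound independent of any normalisation. $F_t$ is a homotopy equivalence of pairs of smooth manifolds, so $\eta(F_t)$ lies in the image of $\pi_{4k}(\GmodO)\to\pi_{4k}(\GmodTOP)$. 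By part~2 of Lemma~\ref{lem:Brumfiel-result-about-zeta_one-two} that image is $\theta_k\Z$. Hence the image of $\alpha^k_{m,n}$ lies in a subgroup of index $\gcd(2n,\theta_k)$, which is strictly smaller than the printed subgroup when $v_2(n)<v_2(\theta_k)$.

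Drop the hedging and state the result as $\gcd(2n,\theta_k)\cdot\Z/2n\Z=\gcd(e,\theta_k)\cdot\Z/e\Z$. Remark that this agrees with the printed form exactly when $v_2(n)\ge v_2(\theta_k)$, which includes the case $n=0$.
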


\begin{proof}
	Proposition \ref{foa} implies map $\alpha_{m,n}^k$ is well defined and Proposition \ref{gen2} provides it image.
\end{proof}	

Note that by construction, all elements in the image of $\alpha_{m,n}^{k}$ have a smooth source manifold and hence are contained in the image of the forgetful map $F_{M_{m,n}^k} \colon \mathcal{S}^{\DIFF}(M_{m,n}^k) \rightarrow \mathcal{S}^{\TOP}(M_{m,n}^k)$.

Theorem \ref{thm:alpha} immediately implies Corollary~\ref{cor:realization-of-ni-when-n-theta-coprime} stated in Section~\ref{sec:leftovers}.


\section{The forgetful map}\label{sec:forgetful_map}


In this section, we shall prove Theorem \ref{thm:the-forgetful-map-on-str-sets}. The key part will be the following theorem. The rest will be dealt with via the surgery machine at the end of the section. We assume $k \geq 3$.

\begin{theorem}\label{fn} 
	The cokernel of the forgetful map 
	\[
	\chi:\mathcal{N}^{\DIFF}(M_{m,n}^k) \rightarrow \mathcal{N}^{\TOP}(M_{m,n}^k)
	\]
	is isomorphic to $\Z/g\Z$ where $g$=gcd$(e,\theta_k)$.
\end{theorem}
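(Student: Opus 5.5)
We compute the image of $\chi=\iota_*\colon[M_{m,n}^k,\GmodO]\to[M_{m,n}^k,\GmodTOP]$, where $\iota\colon \GmodO\to\GmodTOP$ is the natural map. The plan is to use the cell structure of $M_{m,n}^k$. Up to homotopy, $M_{m,n}^k$ is $S^{4k-1}\cup_{2n} e^{4k}\cup e^{8k-1}$, which for $n\neq 0$ is $P^{4k}(2n)\cup e^{8k-1}$ with $P^{4k}(2n)$ the Moore space. Since $\GmodO$ and $\GmodTOP$ are infinite loop spaces and $\iota$ is an infinite loop map, both sides are cohomology theories, so $\chi$ is natural with respect to cofibre sequences.

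We use the cofibration $P^{4k}(2n)\hookrightarrow M_{m,n}^k\to S^{8k-1}$. This gives a map of exact sequences
\[
[S^{8k-1},\G/\CAT]\to[M_{m,n}^k,\G/\CAT]\to[P^{4k}(2n),\G/\CAT]\to[S^{8k-2},\G/\CAT].
\]
On the $\TOP$ side, $\pi_{8k-1}(\GmodTOP)=0$. Also $[M,\GmodTOP]\cong H^{4k}(M)\cong\Z/2n$ by \eqref{idfic}, and restriction to $P^{4k}(2n)$ is an isomorphism because the top cell does not contribute. Hence it suffices to compute the image of $[P^{4k}(2n),\GmodO]\to[P^{4k}(2n),\GmodTOP]\cong\Z/2n$, restricted to those elements that extend over the top cell. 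Alternatively, we may avoid the top cell issue by stable splitting: after one suspension, if the attaching map of the top cell splits stably, then the representable cohomology of $M$ splits as $[P^{4k}(2n),-]\oplus[S^{8k-1},-]$. I expect the stable decomposition mentioned in the introduction to be exactly this. We take it as the first step, justified via the $J$-homomorphism description of the top attaching map, or at least via the splitting of the relevant $[\,\cdot\,,\G/\CAT]$ groups.

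For the Moore space we use the cofibre sequence $S^{4k-1}\xrightarrow{2n}S^{4k-1}\to P^{4k}(2n)\to S^{4k}\xrightarrow{2n}S^{4k}$, which gives, naturally in $\G/\CAT$,
\[
\pi_{4k}(\G/\CAT)\xrightarrow{2n}\pi_{4k}(\G/\CAT)\to[P^{4k}(2n),\G/\CAT]\to\pi_{4k-1}(\G/\CAT)\xrightarrow{2n}\pi_{4k-1}(\G/\CAT).
\]
For $\TOP$ we have $\pi_{4k-1}=0$, so $[P,\GmodTOP]=\Z/2n$ is the cokernel of $2n$ on $\pi_{4k}(\GmodTOP)=\Z$. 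Every element of $[P,\GmodO]$ maps to $[P,\GmodTOP]$. Elements coming from $\pi_{4k}(\GmodO)=\Z\oplus\Theta_{4k}$ map through $\partial$, which by Lemma~\ref{lem:Brumfiel-result-about-zeta_one-two}(2) is multiplication by $\theta_k$ on $\Z$. Elements mapping nontrivially to $\pi_{4k-1}(\GmodO)$ project to $\pi_{4k-1}(\GmodTOP)=0$. By naturality of the ladder, their images in $[P,\GmodTOP]$ are therefore determined only modulo the image of $\pi_{4k}$, so we cannot conclude they contribute nothing. We need to argue that the image is still contained in $\theta_k\Z/2n$. To do this I would compose with $j\colon\GmodTOP\to\BTOP$, which is injective on $[P,-]$ after tensoring away torsion, since $\zeta_1(1)=(j_k'\beta,t_1)$. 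Using that the composite $\GmodO\to\BTOP$ factors through $\BO$, whose $[P,\BO]$ is computed from $\pi_{4k}(\BO)=\Z$ and $\pi_{4k-1}(\BO)=0$, the image lands in $\zeta_2$-multiples. Comparing $\zeta_2(1)$ with $\zeta_1(1)$ gives divisibility by $2^{b_k}\theta_k'/\gcd$ factors. Combining this with $\theta_k$ from $\partial$ gives that the image is exactly $\theta_k\cdot\Z/2n$, which has cokernel $\Z/\gcd(2n,\theta_k)$. For $n=0$ the same argument with $S^{4k-1}\vee S^{4k}$ gives cokernel $\Z/\theta_k$, consistent with $g=\theta_k$.

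The main obstacle is the torsion and unstable part. Concretely, we must show that no element of $[M,\GmodO]$ maps to a class outside $\theta_k\Z/2n$. This means controlling contributions from $\pi_{4k-1}(\GmodO)$ and from the torsion $T$ in $\pi_{4k}(\BTOP)$, and verifying the splitting off of the top cell. I would try first to handle this by working integrally in the $\BO\to\BTOP$ ladder, using the explicit formulas for $\zeta_1,\zeta_2$ and that $\gcd(j_k',\theta_k')=1$. Theorem~\ref{thm:alpha} then supplies the lower bound $\gcd(n,\theta_k)\Z/2n$ as a consistency check.
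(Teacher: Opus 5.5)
Your reduction to $P=P^{4k}(2n)=Y$ is fine, but the upper bound has a genuine gap. You need that no class of $[P,\GmodO]$ maps outside $\theta_k\cdot\Z/2n$, and the gap sits exactly where you located the difficulty: classes $y$ with nonzero image in $\pi_{4k-1}(\GmodO)[2n]=\text{coker}(J_{4k-1})[2n]$. That $\partial$ is multiplication by $\theta_k$ only controls the subgroup $\pi_{4k}(\GmodO)/2n$, so ``combining with $\partial$'' says nothing about such $y$. Your $\BO$-comparison, as formulated, does not close the gap either. Write $x=\chi(y)\in\Z/2n$ and $w\in[P,\BO]\cong\pi_{4k}(\BO)/2n\cong\Z/2n$ for the image of $y$. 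The relation $j_*x=\zeta_{2*}w$ in $\pi_{4k}(\BTOP)/2n\subseteq[P,\BTOP]$ gives on the free summand only $j_k'x\equiv 2^{b_k}\theta_k'w \pmod{2n}$, with $w$ unconstrained. Because $\gcd(j_k',\theta_k')=1$, this yields the correct odd-primary divisibility. At the prime $2$, however, it only gives $2^{\min(b_k,v_2(2n))}\mid x$ ($v_2$ the $2$-adic valuation), whereas you need $2^{\min(v_2(\theta_k),v_2(2n))}\mid x$, and $v_2(\theta_k)-b_k=v_2(j_k)>0$ by the definition of $b_k$. For instance, if $n$ is a power of $2$ with $v_2(2n)>b_k$, then $x=2^{b_k}$ and $w\equiv j_k'(\theta_k')^{-1}$ satisfy your congruence, yet $x\notin\theta_k\cdot\Z/2n$. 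The torsion components $t_1,t_2$ are not known explicitly, so they cannot be used to exclude this. Also, ``injective after tensoring away torsion'' is vacuous, since $[P,\GmodTOP]\cong\Z/2n$ is finite and $j_*$ need not be injective on it.

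The paper closes this differently. If $J_{4k-1}$ is onto, then $\pi_{4k-1}(\GmodO)=0$ and the snake lemma applied to \eqref{mreln1} finishes. Otherwise it uses Adams' splitting $\pi^s_{4k-1}=\im(J_{4k-1})\oplus K(e^{A})$ to lift the bottom-cell restriction of $y$ to $\pi_{4k-1}(\G)$ and extend it to some $z\in[M,\G]$. Then $y-\delta(z)$ comes from the cokernel $R$ of $r$, whose image under $\chi$ is $\theta_k\cdot\Z/e$. Meanwhile $\chi\delta(z)$ lies in $\ker([M,\GmodTOP]\to[M,\BTOP])$, which injects into the corresponding kernel for $Y$ and so has order dividing $h'=\gcd(e,j_k')$; since $h'$ is odd and prime to $\theta_k'$, this kernel lies in $\theta_k\cdot\Z/e$. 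Your own route can also be completed, by one observation you did not make. The class $w$ is the image of a class of $[P,\GmodO]$, so it dies in $[P,\BG]\supseteq\pi^s_{4k-1}/2n$. Since $\im(J_{4k-1})\cong\Z/j_k$ is a direct summand, this forces $\gcd(2n,j_k)\mid w$. Because $b_k+v_2(j_k)=v_2(\theta_k)$, your congruence then gives $j_k'x\equiv 0\pmod{\gcd(2n,\theta_k)}$, hence $x\in\theta_k\cdot\Z/2n$, as $\gcd(j_k',\theta_k)=1$. That would be a uniform argument, with no case distinction and no lifting through $\G$.

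Two smaller points. For the lower bound on $M$, use Proposition~\ref{gen2}, which produces exactly the classes $\theta_k t$, or pull back $\pi_{4k}(\GmodO)$ along $M\to M/S^{4k-1}\simeq_s S^{4k}\vee S^{8k-1}$ as the paper does. The subgroup $\gcd(n,\theta_k)\cdot\Z/2n$ you quote from Theorem~\ref{thm:alpha} is strictly larger than $\theta_k\cdot\Z/2n$ when $n$ is odd, so it is no consistency check. For $n=0$ you need $[M,\GmodTOP]\to[P,\GmodTOP]\cong\Z$ to be onto, not just injective; the paper verifies this via injectivity of $\rho$.
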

The stable $J$-homomorphism, $J_{4k-1}:\pi_{4k-1}(\SO)\rightarrow \pi_{4k-1}^s$, is used in the proof. This homomorphism is almost always not surjective, but there are a few instances when it happens to be surjective, for example, from \cite{IWX20} one can tell this is the case for $4k-1 = 7,11,27 \text{ and } 43$. From a certain point, which is clearly marked, we split the proof into two versions depending on whether $J_{4k-1}$ is surjective or not, since this influences the level of difficulty. 

\noindent \textbf{Proof of Theorem~\ref{fn}:} 
We first look at some homotopical properties of the total space $M_{m,n}^k$. Since $k,m,n$ are fixed throughout the proof, in order to avoid unnecessary clutter in the  notation, we abbreviate $M = M_{m,n}^{k}$. The total space has the following cell structure (see \cite[Equation 3.3]{JW54}):
\begin{equation}\label{cell}
	M = S^{4k-1} \underset{\alpha}{\cup} e^{4k} \underset{\beta}{\cup} e^{8k-1}	
\end{equation}
where  $\beta:S^{8k-2} \rightarrow S^{4k-1} \underset{\alpha}{\cup} e^{4k}$ and $\alpha = p_{*}(\gamma_{m,n}^k)$ are induced from the projection $p: \SO(4k) \rightarrow S^{4k-1}$. In general, $p_*$ sends an $\R^q$ bundle over $S^p$, say $\xi$, to $O_{\xi}$, which is an obstruction against finding a section. If $q=p+1$, then this obstruction is just the Euler number, $e$, of the bundle. Therefore, $\alpha = p_*(\gamma_{m,n}^k)=e=2n$; \cite[p. 64]{Levine(1983)}. 

Let us denote by $Y = S^{4k-1} \underset{\alpha}{\cup} e^{4k}$. From \cite[Equation 3.7]{JW54} and \cite[Equation 5.1 and page 5]{JW55}, it follows that after collapsing the fiber, the total space $M$ is stably homotopy equivalent to the wedge sum
\begin{equation}\label{stableeq}
	M/S^{4k-1} \simeq_s S^{4k} \vee S^{8k-1}.	
\end{equation} 
Namely, since $Y/S^{4k-1} \cong S^{4k}$ the composition $\overline{\beta}: S^{8k-2} \xrightarrow{\beta} Y \xrightarrow{collapse}S^{4k}$ satisfies 
\begin{center}
	$\overline{\beta} \simeq_s 0$ by \cite[Equation 5.1]{JW55}
\end{center}
which implies (\ref{stableeq}).

Since in \eqref{stableeq} we have a stable homotopy equivalence, we obtain, for some $p$, a cofibration sequence of the form:
\begin{multline} \label{mcof}
	\Sigma^{p} S^{4k-1} \cong S^{4k-1+p} \rightarrow \Sigma^{p} M \rightarrow \Sigma^{p}(S^{4k-1} \vee S^{8k-1}) \cong S^{4k-1+p} \vee S^{8k-1+p} \rightarrow \\
	\rightarrow \Sigma^{p+1} S^{4k-1} \cong S^{4k+p} \rightarrow \Sigma^{p+1} M \rightarrow \cdots
\end{multline}

Given a spectrum (or an infinite loop space) $K$ and a space $X$, we may consider $K$-cohomology: 
\begin{equation}\label{RH}
	K^{q}(X) = [X,K]_{-q} = \text{colim}_{p}[\Sigma^{p}X,K_{q+p}].
\end{equation}
From \cite[Part 3, Proposition 6.1]{JA73} and \cite[Theorem 4.58]{AH02} we have that the functors $X \mapsto K^{q} (X) = [X, K]_{-q}$, $q \in \Z$, define a reduced cohomology theory on the category of base-pointed $CW$-complexes and base point preserving maps. 

The spaces $\GmodTOP$, $\GmodO$, $\G$ are infinite loop spaces \footnote{See also Lemma~\ref{lem:group-structures-on-ni-of-E} below.} and hence each gives us a spectrum. Applying the contravariant functors $[-,\GmodO]$ and $[-,\GmodTOP]$ to cofibration (\ref{mcof}) and using suspension and de-suspension isomorphisms in the respective $K$-cohomology, we obtain a commutative diagram as follows:  
\begin{equation}\label{mreln}
	\begin{tikzcd}[column sep=.7em, font=\small]
		\arrow[r] & \left[S^{4k},\GmodO\right] \arrow[r,"r"] \arrow[d] & \left[S^{4k} \vee S^{8k-1},\GmodO\right] \arrow[r,"r_1"] \arrow[d] & \left[M,\GmodO\right] \arrow[r,"r_2"] \arrow[d, "\chi"] & {} \\
		\arrow[r] & \left[S^{4k},\GmodTOP\right] \arrow[r] & \left[S^{4k} \vee S^{8k-1},\GmodTOP\right] \arrow[r] & \left[M,\GmodTOP\right] \arrow[r] & {} \\
		{} \arrow[r,"r_2"] & \left[S^{4k-1},\GmodO\right] \arrow[r,"e\oplus D_{*}"] \arrow[d] & \left[S^{4k-1}\vee S^{8k-2}, \GmodO\right] \arrow[d] \\
		{} \arrow[r] & \left[S^{4k-1},\GmodTOP\right] \arrow[r] & \left[S^{4k-1}\vee S^{8k-2},\GmodTOP\right]
	\end{tikzcd}
\end{equation}

We now study this diagram in detail. Topological surgery tells us that $[S^{n},\GmodTOP] \cong L_{n}(\Z)$ and we have already calculated the normal invariants set $\mathcal{N}^{\TOP}(M) \cong [M,\GmodTOP] \cong \Z/e\Z$ in Theorem \ref{thm:top-str-set}. We also obtain an improvement:

\begin{lemma}\label{lem:group-structures-on-ni-of-E}
	The two group structures on $\mathcal{N}^{\TOP}(M) \cong [M,\GmodTOP]$ coming from the two $H$-space structures on $\GmodTOP$ agree.
\end{lemma}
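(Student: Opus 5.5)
We will prove the lemma by showing that, for $M = M_{m,n}^k$, any two group structures on $[M,\GmodTOP]$ for which a natural identification $[M,\GmodTOP]\cong H^{4k}(M;\Z)$ is a homomorphism must coincide. The approach is to compare both structures with the group structure on cohomology via the primary obstruction isomorphism \eqref{idfic}. Recall the two $H$-space structures on $\GmodTOP$: the Whitney sum structure $\oplus$ and the surgery (characteristic variety / Quinn--Ranicki) structure. Both are infinite loop structures, and in either one the connective cover in the relevant range is governed by the first nonzero homotopy groups.

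The first step is to treat spheres. For $X = S^l$ the set $[S^l,\GmodTOP]=\pi_l(\GmodTOP)$ carries, besides either $H$-space induced group structure, the homotopy group structure coming from the co-$H$ structure of $S^l$. By the Eckmann--Hilton argument, any $H$-space structure on the target induces on $[S^l,-]$ the same group structure as the pinch map. Hence on spheres both structures agree with the usual one on $\pi_l$, and thus with the structure for which $\pi_l(\GmodTOP)\cong L_l(\Z)$.

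The second step is to pass to $M$ using the cell structure \eqref{cell}: $M = S^{4k-1}\cup_{\alpha} e^{4k}\cup_\beta e^{8k-1}$. Restriction to the $4k$-skeleton $Y$ induces a bijection $[M,\GmodTOP]\to[Y,\GmodTOP]$. Indeed, $\pi_{8k-1}(\GmodTOP)=L_{8k-1}(\Z)=0$ and $\pi_{8k-2}(\GmodTOP)=0$, so there are no obstructions to extending over, or to homotopies across, the top cell. This bijection is a homomorphism for either $H$-structure, by naturality. For $Y$ we use the Puppe sequence of the cofibration $S^{4k-1}\to Y\to S^{4k}$:
\[
[S^{4k-1},\GmodTOP]\xleftarrow{}[Y,\GmodTOP]\xleftarrow{}[S^{4k},\GmodTOP]\xleftarrow{(2n)^*}[S^{4k},\GmodTOP].
\]
Here $\pi_{4k-1}(\GmodTOP)=0$, so $[Y,\GmodTOP]$ is a quotient of $[S^{4k},\GmodTOP]\cong\Z$. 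The maps in this Puppe sequence are induced by maps of spaces, so they are homomorphisms for \emph{any} $H$-space structure on the target. Therefore, in both structures, $[M,\GmodTOP]$ is the homomorphic image of the group $\pi_{4k}(\GmodTOP)$, whose group structure is the same for both by step one. The surjection $c^*\colon\pi_{4k}(\GmodTOP)\to[M,\GmodTOP]$ is the same map of sets in both cases. A group structure on a set that makes a given surjection from a fixed group a homomorphism is uniquely determined: we have $c^*(a)+c^*(b)=c^*(a+b)$. Hence the two structures agree, and both are identified with $\Z/e\Z$ (or $\Z$ when $n=0$).

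The main obstacle is justifying the exactness and surjectivity in the Puppe sequence at the level of pointed sets versus groups. We need that $c^*$ is onto, which follows from exactness at $[Y,\GmodTOP]$ of pointed sets together with the vanishing of $\pi_{4k-1}(\GmodTOP)$. This requires the pointed-set exactness only, and we also have to keep track of base points (free versus based homotopy classes, harmless since $\GmodTOP$ is simple, being an $H$-space). I would check this carefully, and also confirm via \eqref{idfic} that the resulting order is $e$, consistent with Theorem~\ref{thm:top-str-set}.
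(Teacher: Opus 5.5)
Your overall strategy is the paper's: Eckmann--Hilton shows both structures agree on $[S^l,\GmodTOP]$, and $[M,\GmodTOP]$ is then exhibited as a homomorphic image of $\pi_{4k}(\GmodTOP)$ under a map that is a homomorphism for both structures. You carry out the second step through the unstable skeleton $Y$ rather than the paper's stable cofibration sequence \eqref{mcof}. That is, if anything, cleaner, since every map involved is induced by a map of spaces. However, one step rests on a false claim. You have $\pi_{8k-2}(\GmodTOP)\cong L_{8k-2}(\Z)\cong\Z/2\Z$, not $0$, because $8k-2\equiv 2 \pmod 4$. The obstruction to extending $f\colon Y\to\GmodTOP$ over the top cell is the class of $f\circ\beta$ in $\pi_{8k-2}(\GmodTOP)$, and this is not automatically zero. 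So your assertion that restriction $i^*\colon[M,\GmodTOP]\to[Y,\GmodTOP]$ is a bijection is unjustified as written. The same applies to your conclusion that $[M,\GmodTOP]$ is a quotient of $\pi_{4k}(\GmodTOP)$, which your final uniqueness argument uses.

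The gap is easy to close in either of two ways.

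First, surjectivity is not needed. The map $i^*$ is injective, because the fibres of restriction are orbits of $\pi_{8k-1}(\GmodTOP)=L_{8k-1}(\Z)=0$. It is a homomorphism for both structures, and the two structures agree on $[Y,\GmodTOP]$ by your Puppe-sequence argument. Hence $i^*(a+_1b)=i^*(a)+_1i^*(b)=i^*(a)+_2i^*(b)=i^*(a+_2b)$, which forces $a+_1b=a+_2b$.

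Second, you can obtain the surjection you want from the bundle projection $\overline{p}_{m,n}\colon M\to S^{4k}$. In the James--Whitehead cell structure, its restriction to $Y$ is exactly the collapse $c_Y\colon Y\to Y/S^{4k-1}=S^{4k}$. So $i^*\circ\overline{p}_{m,n}^*=c_Y^*$, which is onto because $\pi_{4k-1}(\GmodTOP)=0$. Combined with injectivity of $i^*$, this shows that $i^*$ is in fact bijective. It also shows that $\overline{p}_{m,n}^*\colon\pi_{4k}(\GmodTOP)\to[M,\GmodTOP]$ is a surjection induced by an honest map of spaces. That is precisely the map $c^*$ your last step requires, and it works uniformly, including the case $n=0$.
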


\begin{proof}
	The two group structures on $[S^{n},\GmodTOP]$ agree for any $n$ since both of them agree with the group structure obtained from the co-$H$-spaces structure of $S^n$. By the above sequence, $[M,\GmodTOP]$ is the quotient of a homomorphism $[S^{4k},\GmodTOP] \rightarrow [S^{4k},\GmodTOP]$, which proves the claim.
\end{proof}

By \eqref{eqn:pi_4k-of-G-mod-O} we have $[S^{4k},\GmodO] \cong \Z \oplus \Theta_{4k}$. By part 2. of Lemma~\ref{lem:Brumfiel-result-about-zeta_one-two}, the first two vertical maps are given as multiplication by $\theta_k$, which is defined in Notation \eqref{equation:nota}. The Kervaire-Milnor braid also tells us that $\pi_{4k-1}(\GmodO)$ is the cokernel of $J_{4k-1}$. Let us denote it by $c(J_{4k-1})$.

We need to discuss now the last map of the top row in (\ref{mreln}). It is given by the direct sum $e\oplus D_*$ where  $D_*$ is induced by a stable map $S^{8k-2} \rightarrow S^{4k-1}$ and $e$ is the Euler number by the following argument. 

Consider the cofibration sequence (\ref{mcof}). As indicated below \eqref{cell}, the first component of the third map $S^{4k}\vee S^{8k-1} \rightarrow \Sigma S^{4k-1} \cong S^{4k}$ is given as multiplication by $e$, i.e., $S^{4k} \xrightarrow{\cdot e} S^{4k}$, which, being a multiplication, remains the same after suspensions or de-suspensions. Hence, the first component of the last map in the top row of \eqref{mreln} is given as multiplication by $e$.

After replacing known values the above diagram becomes:
\begin{equation}\label{mreln1}
	\begin{tikzcd}[column sep=.7em, font=\small]
		\arrow[r] & \Z \oplus \Theta_{4k} \arrow[r,"r"] \arrow[d,"\theta_k"] & \Z \oplus \Theta_{4k}\oplus c(J_{8k-1}) \arrow[r,"r_1"] \arrow[d,"\theta_k"] & \left[M,\GmodO\right] \arrow[r,"r_2"] \arrow[d, "\chi"] & c(J_{4k-1}) \arrow[r,"e\oplus D_{*}"] \arrow[d] & c(J_{4k-1})\oplus \pi_{8k-2}(\GmodO) \arrow[d] \\
		\arrow[r] & \Z \arrow[r,"\cdot e"] & \Z \oplus 0 \arrow[r,"p"] & \Z/e\Z \arrow[r] & 0 \arrow[r] & 0 \oplus \Z/2\Z
	\end{tikzcd}
\end{equation}

\noindent \textbf{The case when $J_{4k-1}$ is surjective:}


\noindent In this case, the above diagram simplifies to: 

\ 

\begin{tikzcd}
	& \Z \oplus \Theta_{4k} \arrow[r,"r"] \arrow[d,"\theta_k"] & \Z \oplus \Theta_{4k}\oplus c(J_{8k-1}) \arrow[r,"r_1"] \arrow[d,"\theta_k"] & \left[M,\GmodO\right] \arrow[r,"r_2"] \arrow[d, "\chi"] & 0 \\
	0 \arrow[r] & \Z \arrow[r,"\cdot e"] & \Z \arrow[r,"p"] & \Z/e\Z 
\end{tikzcd}\\

The snake lemma gives us the following exact sequence of kernels and cokernels of vertical maps
\[
\Theta_{4k} \rightarrow \Theta_{4k} \oplus c(J_{8k-1}) \rightarrow K \xrightarrow{\delta} \Z/\theta_k\cdot \Z \xrightarrow{\cdot e} \Z/\theta_k \cdot \Z \rightarrow K' \rightarrow 0
\]
where $K = \text{kernel}(\chi)$, $K'= \text{cokernel}(\chi)$ and $\delta$ is the connecting homomorphism. By the first isomorphism theorem $K' \cong \Z/g\Z$, where $g = \text{gcd}(e,\theta_{k})$. This proves Theorem \ref{fn} when the $J_{4k-1}$-homomorphism is surjective. 

\


\noindent \textbf{The case when $J_{4k-1}$ is not surjective:} The proof in the previous case was obtained by studying a map on the normal invariants induced by the map $\GmodO \rightarrow \GmodTOP$, which occurs in the Kervaire-Milnor braid. The idea in this case is to employ some of the remaining maps in that braid to obtain more information, namely the part:
\[
\xymatrix{
	G \ar[r] \ar[d] & \GmodO \ar[r] \ar[d] & \BO \ar[d] \\
	G \ar[r] & \GmodTOP \ar[r] & \BTOP 
}
\]
It induces the following commutative diagram in which the map $\chi$ also fits in:
\[
\begin{tikzcd}
	\cdots \arrow[r] & \left[M,G\right] \arrow[r,"\delta"] \arrow[d, "="] & \left[M, \GmodO \right] \arrow[r] \arrow[d, "\chi"] & \left[M,\BO\right] \arrow[r] \arrow[d] & \left[M,BG\right] \arrow[d,"="] \arrow[r]&{\cdots} \\
	\cdots \arrow[r] & \left[M,G\right] \arrow[r] & \left[M,\GmodTOP\right] \arrow[r] & \left[M,\BTOP\right] \arrow[r] & \left[M,BG\right] \arrow[r]&{\cdots}
\end{tikzcd}
\]
If we denote by $R$ the cokernel of the map $r$ in the diagram (\ref{mreln}) then by combining it with above diagram we get the another commutative diagram:
\begin{equation}\label{chidisc}
	\begin{tikzcd}
		{} & \left[M,G\right] \arrow[d, "\delta"] \arrow[dd, bend left = 60, near start, "\chi'"] & {} \\
		R \arrow[r, "\overline{r}"] \arrow[dr,"\chi''"]  & \left[M,\GmodO\right] \arrow[r, "r_2"] \arrow[d,"\chi"] & \left[S^{4k-1},\GmodO\right] \\
		{} & \left[M,\GmodTOP\right] & {}
	\end{tikzcd}
\end{equation}
where $\chi'' = \chi \circ \overline{r}$ and $\chi' = \chi \circ \delta$. The horizontal row is exact and $\overline{r}$ is injective. We shall prove the following: 
\begin{lemma}\label{image}
	The image of the map $\chi$ is the group $\text{im}(\chi) = \langle \text{im}(\chi'),\text{im}(\chi'')\rangle$ generated by images of $\chi'$ and $\chi''$.
\end{lemma}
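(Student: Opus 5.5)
The inclusion $\langle \text{im}(\chi'),\text{im}(\chi'')\rangle \subseteq \text{im}(\chi)$ is immediate. Both $\chi' = \chi\circ\delta$ and $\chi''=\chi\circ\overline{r}$ factor through $\chi$. The map $\chi$ is induced by the infinite loop map $\GmodO\to\GmodTOP$, so it is a homomorphism; by Lemma~\ref{lem:group-structures-on-ni-of-E} the group structure on the target is unambiguous. Hence $\text{im}(\chi)$ is a subgroup containing both images. The work is in the reverse inclusion.

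For the reverse inclusion, take $x\in[M,\GmodO]$ and look at $r_2(x)\in[S^{4k-1},\GmodO]\cong c(J_{4k-1})$. By exactness of the top row of \eqref{mreln}, $r_2(x)$ lies in the kernel of $e\oplus D_*$. The plan is to show that $r_2(x)$ can be killed by modifying $x$ by an element of $\text{im}(\delta)$. Concretely, I want to find $y\in[M,\G]$ with $r_2(\delta(y)) = r_2(x)$. Then $x-\delta(y)\in\ker r_2=\text{im}(\overline{r})$, so $x-\delta(y)=\overline{r}(z)$ for some $z\in R$, and
\[
\chi(x)=\chi'(y)+\chi''(z),
\]
which is what we want.

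To produce $y$, I will use naturality of the cofibration sequence \eqref{mcof} for the cohomology theory associated to $\G$ (stable homotopy $\pi^s$). This gives a commutative square in which $[M,\G]\to[S^{4k-1},\G]\cong\pi^s_{4k-1}$ followed by $\pi^s_{4k-1}\to c(J_{4k-1})$ equals $r_2\circ\delta$. The last map is the braid map $\pi_{4k-1}(\G)\to\pi_{4k-1}(\GmodO)$, which is surjective onto the cokernel of $J$. So it suffices to lift a preimage $w\in\pi^s_{4k-1}$ of $r_2(x)$ to $[M,\G]$. For this I need $w$ to lie in the kernel of $e\oplus D_*$ on the $\G$-level, i.e. in the image of $[M,\G]\to\pi^s_{4k-1}$.

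This lifting step is the main obstacle. The constraint on $r_2(x)$ only holds modulo $\text{im}(J_{4k-1})$, so $ew$ and $D_*w$ vanish only modulo the image of $J$. I would try two things. First, adjust $w$ by elements of $\text{im}(J_{4k-1})$, using that the $\G$-level map is again multiplication by $e$ on the first component together with a map $D_*\colon\pi^s_{4k-1}\to\pi^s_{8k-2}$ compatible with the one on $\GmodO$. Second, use that any obstruction lies in $\text{im}(J)$ and so comes from $[\,\cdot\,,\SO]$, where it can be absorbed. If a full lift fails, the fallback is to argue only about the images in $[M,\GmodTOP]\cong\Z/e\Z$. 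The composite $[M,\G]\to[M,\GmodTOP]$ factors through the $S^{4k}$-cell data, and the discrepancy coming from $\text{im}(J)$ maps into $\text{im}(\chi'')$ via $\BO$-level considerations from the braid. That would still give $\chi(x)\in\langle\text{im}(\chi'),\text{im}(\chi'')\rangle$.
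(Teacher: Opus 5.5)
Your reduction is the right one and coincides with the paper's: everything hinges on producing $y\in[M,\G]$ with $r_2(\delta(y))=r_2(x)$, i.e.\ on finding a preimage $w\in\pi^s_{4k-1}$ of $r_2(x)$ that lies in the kernel of the $\G$-level map $e\oplus D_*$. That is exactly the step you leave open, and this is a genuine gap. It is not automatic. If the extension $0\to\im(J_{4k-1})\to\pi^s_{4k-1}\to c(J_{4k-1})\to 0$ did not split, an element killed by $e$ in the quotient could have no lift killed by $e$ (think of $\Z/4\to\Z/2$ with $e=2$). The missing input is Adams' splitting \eqref{eqn:splitting-of-coker-ofJ}, $\pi^s_{4k-1}=\im(J_{4k-1})\oplus K(e^A)$. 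It gives a homomorphic section $f'\colon c(J_{4k-1})\to\pi^s_{4k-1}$ of $\pi^s_{4k-1}\to c(J_{4k-1})$. Taking $w=f'(r_2(x))$ you get $e\cdot w=f'(e\cdot r_2(x))=0$ on the nose. Your idea of ``adjusting $w$ by elements of $\im(J)$'' only works once you know you can move $w$ into such a complement. Your second idea (absorbing the obstruction via $[\,\cdot\,,\SO]$) and your fallback via $[M,\GmodTOP]$ are not arguments as stated.

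For the second component you have misdiagnosed the difficulty. $D_*w$ does not merely vanish ``modulo the image of $J$''. Its image in $\pi_{8k-2}(\GmodO)$ is $D_*(r_2(x))=0$. Moreover $\pi_{8k-2}(\G)\to\pi_{8k-2}(\GmodO)$ is an isomorphism, because $\pi_{8k-2}(\Orthogroup)=0=\pi_{8k-3}(\Orthogroup)$ by Bott periodicity ($8k-2\equiv 6$ and $8k-3\equiv 5 \bmod 8$). The relevant $J$-homomorphism here is $J_{8k-2}$, whose source is zero, so $D_*w=0$. With these two facts, $w$ lies in the kernel of $e\oplus D_*$ at the $\G$-level. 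Exactness of the sequence obtained by applying $[-,\G]$ to \eqref{mcof} then lifts $w$ to some $y\in[M,\G]$, and the rest of your argument goes through. This is precisely how the paper proceeds: it builds the section $g'=f'\oplus v^{-1}$ of the second column, with $v$ the isomorphism just described.
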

\begin{proof}
	It is easy to see that 
	\begin{center}
		$\langle \text{im}(\chi'),\text{im}(\chi'')\rangle \subseteq \text{im}(\chi)$.
	\end{center}
	Now let us prove inclusion the other way around.
	
	Let $y \in [M,\GmodO]$. Assume for a moment that there exists some $z \in [M,G]$ such that $r_2(\delta(z)) = r_2(y)$ in $[S^{4k-1},\GmodO]= \pi_{4k-1}(\GmodO) = c(J_{4k-1})$. Then we have:
	\begin{align*}
		r_2(\delta(z)-y) = r_2(\delta(z)) - r_2(y) = 0 & \implies \delta(z)-y = \overline{r}(x) \text{ for some } x \in R\\
		& \implies y = \delta(z) - \overline{r}(x)\\
		& \implies \chi(y) = \chi(\delta(z)) - \chi(\overline{r}(x)) = \chi'(z)-\chi''(x).
	\end{align*}
	This proves the other inclusion.
	
	It remains to show the existence of $z \in [M,G]$ that was assumed in the paragraph above. 
	
	Consider the following sequence of maps in which two consecutive arrows form a fibration up to homotopy: 
	\begin{center}
		$\cdots \rightarrow \G \rightarrow \GmodO \rightarrow \BO \rightarrow BG \rightarrow \cdots$
	\end{center} 
	After applying the covariant functors $[S^{4k-1},-]$ and $[S^{4k-1}\vee S^{8k-2},-]$, we get a commutative diagram of the long exact sequences that has a part looking like
	\[
	\begin{tikzcd}
		{[S^{4k-1},\Orthogroup]} \arrow[r,"e \oplus D_*"] \arrow[d,"J_{4k-1}"] & {[S^{4k-1},\Orthogroup] \oplus [S^{8k-2},\Orthogroup]} \arrow[d,"J_{4k-1}\oplus s"] \\
		{[S^{4k-1},\G]} \arrow[r,"e \oplus D_*"] \arrow[d,"f"] & {[S^{4k-1},\G] \oplus [S^{8k-2},\G]} \arrow[d,"g"] \\
		{[S^{4k-1},\GmodO]} \arrow[r,"e \oplus D_*"] \arrow[d,"f_1"] & {[S^{4k-1},\GmodO] \oplus [S^{8k-2},\GmodO]} \arrow[d,"g_1"] \\
		{[S^{4k-1},\BO]} \arrow[r,"e \oplus D_*"] & {[S^{4k-1},\BO] \oplus [S^{8k-2},\BO]}
	\end{tikzcd}
	\]
	Note that all the horizontal maps are different, but they are defined in the same way, so we are abusing the notation. After replacing known terms, see \eqref{eqn:htpy-groups-of-so}, the diagram becomes 
	\[
	\begin{tikzcd}
		\Z \arrow[r,"e \oplus D_*"] \arrow[d,"J_{4k-1}"] & \Z \oplus 0 \arrow[d,"J_{4k-1}\oplus s"] \\
		{[S^{4k-1},\G]} \arrow[r,"e \oplus D_*"] \arrow[d,"f"] & {[S^{4k-1},\G] \oplus [S^{8k-2},\G]} \arrow[d,"g = f \oplus v"] \\
		{[S^{4k-1},\GmodO]} \arrow[r,"e \oplus D_*"] \arrow[d,"f_1"] & {[S^{4k-1},\GmodO] \oplus [S^{8k-2},\GmodO]} \arrow[d,"g_1"] \\
		0 \arrow[r,"e \oplus D_*"]           & 0 \oplus 0
	\end{tikzcd}
	\]
	By exactness, we can see that $v$ in the second column is an isomorphism. Also, in the first column, since $[S^{4k-1},\GmodO]$ is the cokernel of $J_{4k-1}$, we shall have a splitting in the opposite direction of $f$, say $f'$ (the splitting exists because of \eqref{eqn:splitting-of-coker-ofJ}). We use $f'$ to define a splitting for the second column $g'= f' \oplus v^{-1}$. Therefore, the middle square becomes:
	\[
	\begin{tikzcd}
		{[S^{4k-1},G]} \arrow[r,"e\oplus D_*"] \arrow[d,"f"] & {[S^{4k-1},G] \oplus [S^{8k-2},G]} \arrow[d,"g = f\oplus v"] \\
		{[S^{4k-1},\GmodO]} \arrow[r,"e\oplus D_*"] \arrow[u, bend left,"f'"] & {[S^{4k-1},\GmodO] \oplus [S^{8k-2},\GmodO]} \arrow[u, bend left,"g'= f' \oplus v^{-1}"]
	\end{tikzcd}
	\]
	Combining it with diagram (\ref{chidisc}) we get
	\[
	\begin{tikzcd}
		& {[M,G]} \arrow[d,"\delta"] \arrow[r,"r'_2"] &	{[S^{4k-1},G]} \arrow[r,"e\oplus D_*"] \arrow[d,"f"] & {[S^{4k-1},G] \oplus [S^{8k-2},G]} \arrow[d,"g"] \\
		R \arrow[r,"\overline{r}"]& {[M,\GmodO]} \arrow[r,"r_2"]&	{[S^{4k-1},\GmodO]} \arrow[r,"e\oplus D_*"] \arrow[u, bend left,"f'"] & {[S^{4k-1},\GmodO] \oplus [S^{8k-2},\GmodO]} \arrow[u, bend left,"g'"]
	\end{tikzcd}
	\]
	where the map $r'_2$ is defined in the same manner as $r_2$ from the exact sequence obtained after applying the contravariant functor $[-,G]$ to the cofibration sequence (\ref{mcof}). Given any $r_2(y) \in [S^{4k-1},\GmodO]$ for some $y \in [M,\GmodO]$, it is mapped to, say, some $\overline{r_2(y)} \in [S^{4k-1},G]$ under $f'$. Since the bottom row is exact, $r_2(y)$ is mapped to zero under the next map $e \oplus D_*$. This implies that $\overline{r_2(y)}$ goes to zero under the top right map $e \oplus D_*$. By exactness of the top row, there exists some $z \in [M,G]$ such that $r'_2(z) = \overline{r_2(y)}$. By commutativity of the first diagram, $r_2(\delta(z))$ and $r_2(y)$ go to same element in $[S^{4k-1},\GmodO]$.
\end{proof}	

Now we shall extensively study the images of $\chi'$ and $\chi''$. In fact from (\ref{mreln1}) it follows that the map $\chi''$ is the map between the cokernls of the maps $r$ and $(-)\cdot e$ and induced from the multiplication by $\theta_k$. Hence $\text{im}(\chi'')$ is the subgroup
\begin{center}
	$\text{im}(\chi'') \cong \theta_k \cdot \left( \Z /e\Z\right)$.
\end{center}
It follows from the commutative diagram (\ref{mreln1}) restricted to cokernels of maps $r$ and $e$ in the second square which becomes:
\[
\begin{tikzcd}
	R \arrow[r,"\overline{r}"] \arrow[d,"\theta_k"] \arrow[dr,"\chi''"] & {[M,\GmodO]} \arrow[d,"\chi"] \\
	\Z/e\Z \arrow[r,"\cong"] & \Z/e\Z
\end{tikzcd}
\]
In fact, $R$ can be written as the direct product $\Z/e\Z \oplus S$, where $S$ is some torsion group, as the map $r$ in diagram (\ref{mreln1}) is a $(3 \times 2)$-matrix with its top left entry $e$. The vertical map is given as multiplication by $\theta_{k}$ on the first component and zero on the rest, since the rest is torsion. All this together implies that $\im(\chi'') \cong \theta_k\cdot(\Z/e\Z)$. Also note that the cardinality of $\im(\chi'')$ as a set is $\overline{e}=e/g$, and being cyclic, it would imply $\im(\chi'') \cong \Z /\overline{e}\Z$. 

Next, we are going to study the image of the map $\chi'$. Note that the map $\chi'$ fits into the following long exact sequence
\begin{center}
	$\cdots \rightarrow [M,\G] \xrightarrow{\chi'} [M,\GmodTOP] \xrightarrow{\Psi} [M,\BTOP] \rightarrow [M,\BG] \rightarrow \cdots$
\end{center}
Therefore $\text{im}(\chi')$ = ker$(\Psi)$.
To describe the kernel of the map $\Psi$, we shall use the subspace $Y = S^{4k-1} \underset{\alpha}{\cup} e^{4k}$ of $M$ defined earlier. It will eventually be proved that there is an injective map from the kernel of $\Psi$ to the kernel of the restriction map $\Psi|_{Y}$.
Consider the following cofibre sequence
\begin{center}
	$\cdots \rightarrow S^{4k-1}\rightarrow Y \rightarrow S^{4k} \xrightarrow{\cdot e} S^{4k} \rightarrow \Sigma Y \rightarrow \cdots $
\end{center}
Similar to diagram (\ref{mreln}) we have a commutative diagram
\[
\begin{tikzcd}
	\cdots \arrow[r] & \left[S^{4k},\GmodTOP\right] \arrow[r] \arrow[d] & \left[S^{4k}, \GmodTOP \right] \arrow[r] \arrow[d] & \left[Y,\GmodTOP\right] \arrow[r] \arrow[d,"\Psi|_{Y}"] & {} \\
	\cdots\arrow[r] & \left[S^{4k},\BTOP\right] \arrow[r] & \left[S^{4k},\BTOP\right] \arrow[r] & \left[Y,\BTOP\right] \arrow[r] & {} \\
	{} & {} \arrow[r] & \left[S^{4k-1},\GmodTOP\right] \arrow[d] \arrow[r]&{\cdots} \\
	{} & {} \arrow[r] & \left[S^{4k-1},\BTOP\right] \arrow[r]&{\cdots}
\end{tikzcd}
\]
By replacing known spaces and maps from the Kervaire-Milnor braid, see part 1. of Lemma~\ref{lem:Brumfiel-result-about-zeta_one-two}, we obtain
\[
\begin{tikzcd}
	\cdots \arrow[r,"0"]& \Z \arrow[r,"\cdot e"] \arrow[d,"{(j_{k}',t_1)}"] & \Z \arrow[r] \arrow[d,"{(j_{k}',t_1)}"]& {[Y,\GmodTOP]} \arrow[r] \arrow[d,"{\Psi|_{Y}}"] & 0 \arrow[d] \\
	\cdots\arrow[r] & \Z \oplus T \arrow[r,"\cdot e"] & \Z \oplus T \arrow[r] & {[Y,\BTOP]} \arrow[r] & {[S^{4k-1},\BTOP]} 
\end{tikzcd}
\]
where $j_{k}'$ is defined earlier in (\ref{equation:nota}). Top exact row implies $[Y,\GmodTOP] \cong \Z/e\Z$. The following commutative diagram will help us studying kernel of $\Psi|_{Y}$. 
\[
\begin{tikzcd}
	0 \arrow[r] & {[Y,\GmodTOP]} \arrow[r,"\cong"] \arrow[d,"\Psi|_{Y}'"] & {[Y,\GmodTOP]} \arrow[r] \arrow[d,"\Psi|_{Y}"] & 0 \arrow[r] \arrow[d] & 0 \\
	0 \arrow[r] & W \arrow[r] & {[Y,\BTOP]} \arrow[r] & W' \arrow[r] & 0  
\end{tikzcd}
\]
where $W \cong \Z /e\Z \oplus T'$, with $T'$ being cokernel of restriction of $e$ onto $T$, is the cokernel of the bottom map of previous diagram and $W'$ is the cokernel of the inclusion of $W$ in $[Y,\BTOP]$.\\

By snake lemma, ker$(\Psi|_{Y}) \cong$ ker$\Psi|_{Y}'$. Let us denote it by $K$. We have 

\begin{center}
	$K = \text{ker}(\pi_{1}\circ \Psi|_{Y}') \cap \text{ker}(\pi_{2}\circ \Psi|_{Y}')$
\end{center}
where $\pi_1$ and $\pi_2$ are projections onto first and second components.\\

And it is clear that ker$(\pi_{1}\circ \Psi|_{Y}') \cong \Z/h'\Z$ where $h' = \text{gcd}(j_{k}',e)$. Therefore ker$(\Psi|_{Y})$ is a subgroup of $\Z/h'\Z$.\\

Finally, it remains to show that there is an injection ker$\Psi \rightarrow$ ker$\Psi|_{Y}$ implying that $\text{im}(\chi') = $ ker$(\Psi)$ is even smaller. The cofibre sequence
\begin{center}
	$Y \rightarrow M \rightarrow S^{8k-1}$	
\end{center} 
induces the commutative diagram where the indicated map in the top row turns out to be an isomorphism: 
\[
\begin{tikzcd}
	0 \arrow[r] \arrow[d] & {[M,\GmodTOP]} \arrow[r,"\cong"] \arrow[d,"\Psi"] & {[Y,\GmodTOP]} \arrow[r] \arrow[d,"\Psi|_{Y}"] & \Z/2\Z \arrow[r,"\rho"] \arrow[d] & {} \\
	\pi_{8k-1}(\BTOP) \arrow[r] & {[M,\BTOP]} \arrow[r,"\Gamma"] & {[Y,\BTOP]} \arrow[r] & \pi_{8k-2}(\BTOP) \arrow[r] & {} 
\end{tikzcd}
\]
To show the statement about the map in the top row, we need to consider two cases. When $n \neq 0$ this map is an injective map from the finite cyclic group $\Z/e\Z$ to itself, and so it is an isomorphism, which immediately gives us another diagram 
\[
\begin{tikzcd}
	0 \arrow[r] &	0 \arrow[r] \arrow[d] & {[M,\GmodTOP]} \arrow[r,"\cong"] \arrow[d,"\Psi"] & {[Y,\GmodTOP]} \arrow[r] \arrow[d,"\Psi|_{Y}"] & 0  \\
	0 \arrow[r] &	\text{ker}(\Gamma) \arrow[r] & {[M,\BTOP]} \arrow[r,"\Gamma"] & \text{im}(\Gamma) \arrow[r] & 0 
\end{tikzcd}
\]

Now the snake lemma implies that there is an exact sequence 
\begin{center}
	$0 \rightarrow \text{ker}(\Psi) \rightarrow \text{ker}(\Psi|_{Y}) \rightarrow \text{ker}(\Gamma) \rightarrow...$
\end{center}

therefore, the map ker$\Psi \rightarrow$ ker$\Psi|_Y$ is injective. When $n=0$, we can apply the same reasoning, but need to provide an additional reason for why the map $[M,G/TOP]\rightarrow [Y,G/TOP]$ is an isomorphism. Note that in this case it is a map $\Z \rightarrow \Z$. The argument is made by showing that the map $\rho \colon [S^{8k-2},\GmodTOP] \cong \Z/2\Z \rightarrow [\Sigma^{-1} M,\GmodTOP]$ to the right in the diagram is injective. This, in turn, is provided by considering a larger diagram where this map fits:
\[
\xymatrix{
	\cdots \ar[r] & [S^{8k-2},\GmodTOP] \ar[r]^{\rho} & [\Sigma^{-1} M,\GmodTOP] \ar[r] & \cdots \\
	0 \ar[r] \ar[u] & [S^{8k-2},\GmodTOP] \ar[r]_-{\cong} \ar[u]^{\cong} & [S^{4k-1} \vee S^{8k-2} ,\GmodTOP] \ar[r] \ar[u] & 0 \ar[u] \\
	& & [S^{4k-1} ,\GmodTOP] \cong 0 \ar[u] 
}
\]

This finishes the proof of the statement that the map ker($\Psi )\rightarrow$ ker($\Psi|_Y$) is injective.

 Also recall that ker$(\Psi) = \text{im}(\chi')$ which is a subgroup, say $A$, of $\Z/h'\Z$ where $h'=\text{gcd}(j_k',e)$. Therefore $\text{im}(\chi) = \langle A, B \rangle$ where $A = \text{im}(\chi') \le \Z/h'\Z$, $B = \text{im}(\chi'') \cong \theta_k\cdot (\Z/e\Z)$. Note that $B$ is a cyclic group of order $\frac{e}{g} = \overline{e}$.

\begin{lemma}
	$A = \text{im}(\chi') \subseteq \text{im}(\chi'') = B$
\end{lemma}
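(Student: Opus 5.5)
The plan is to reduce the inclusion to an order-divisibility check inside the cyclic group $[M,\GmodTOP]\cong\Z/e\Z$. A cyclic group has exactly one subgroup of each order dividing the group order. So for subgroups $A,B$ of $\Z/e\Z$ we have $A\subseteq B$ if and only if $|A|$ divides $|B|$. We already know that $B=\im(\chi'')=\theta_k\cdot(\Z/e\Z)$ is cyclic of order $\overline{e}=e/g$, where $g=\gcd(e,\theta_k)$. It therefore suffices to show that $|A|$ divides $e/g$.

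\textbf{Bounding $|A|$.} The argument just given shows that $A=\ker(\Psi)$ injects into $\ker(\Psi|_Y)$. This group lies in $\ker(\pi_1\circ\Psi|_Y')\cong\Z/h'\Z$, where $h'=\gcd(j_k',e)$. Hence $|A|$ divides $h'$, and it suffices to prove that $h'$ divides $e/g$.

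\textbf{Key arithmetic step.} I expect the only real input to be the coprimality fact quoted after \eqref{equation:nota}, namely $\gcd(j_k',\theta_k')=1$, where $j_k'$ is odd. By definition $\theta_k=2^{b}\theta_k'$ for some $b\ge 0$. Since $j_k'$ is odd, $\gcd(j_k',\theta_k)=\gcd(j_k',\theta_k')=1$. Now $h'$ divides $j_k'$ and $g$ divides $\theta_k$, so $\gcd(h',g)=1$. Both $h'$ and $g$ divide $e$, so $h'g$ divides $e$, that is, $h'$ divides $e/g$. Combined with the previous step this gives $|A|$ divides $|B|$, hence $A\subseteq B$.

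\textbf{The case $n=0$.} Here $e=0$, so the groups are $\Z$ rather than finite cyclic, and the counting argument must be replaced by a direct one. In this case $\Psi|_Y'$ composed with $\pi_1$ is multiplication by $j_k'$ on $\Z$, which is injective. Thus $\ker(\Psi|_Y)=0$, and the injection $\ker\Psi\hookrightarrow\ker\Psi|_Y$ gives $A=0\subseteq B$. The main thing to verify carefully is that "$A$ is a subgroup of $\Z/h'\Z$" really means $|A|$ divides $h'$ in the $n\neq0$ case, and that the $n=0$ case is handled separately as above; the arithmetic itself is routine.
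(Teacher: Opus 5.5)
Your proposal is correct and follows essentially the same route as the paper: you reduce the inclusion to showing $h'\mid e/g$, which relies on the fact that subgroups of the cyclic group $[M,\GmodTOP]$ are determined by their orders, and you derive that divisibility from $\gcd(j_k',\theta_k')=1$. Two small improvements over the paper: your arithmetic is more streamlined, since you note directly that $\gcd(j_k',\theta_k)=1$ because $j_k'$ is odd, so $h'g\mid e$, and this avoids the paper's 2-adic bookkeeping; and your separate treatment of $n=0$, where $\ker(\pi_1\circ\Psi|_Y')=0$ rather than $\Z/h'\Z$, is more careful than the paper, which covers that case only implicitly.
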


\begin{proof}
	We shall show that $h'|\overline{e}$, which means that $A$, is in fact a subgroup of $B$ and therefore $\text{im}(\chi) = \text{im}(\chi'')$. 
	
	Let us recall the integers involved, their relevant properties, and introduce some more notation. The necessary definitions are in and around \eqref{equation:nota} and \eqref{equation:alter}:
	\begin{itemize}
		\item $g = \gcd (e,\theta_k)$;
		\item $h' = \gcd (e,j'_k)$, which is an odd integer, since $j'_k$ is odd;
		\item $\theta_k = 2^{\alpha} \cdot \theta'_k$ for suitable $\alpha$;
		\item $1 = \gcd (j'_k,\theta'_k)$;
		\item $e = 2^{\beta} \cdot e'$, where $e'$ is odd, for suitable $\beta$;
		\item $\gamma = \min (\alpha,\beta)$.
	\end{itemize}
	
	Then $g = 2^{\gamma} \cdot \gcd (e',\theta'_k)$ and $g' = \gcd (e',\theta'_k)$ is odd, since both $e'$ and $\theta'_k$ are odd. Moreover, for $\delta = \max (0,\beta-\alpha)$, we have
	\[
	\overline{e} = \frac{e}{g} = 2^{\delta} \cdot \frac{e'}{g'}.
	\]
	Note that $h'$ divides $j'_k$ and $g'$ divides $\theta'_k$. Since $1 = \gcd (j'_k,\theta'_k)$ we obtain that $1 = \gcd (h',g')$. Also $h'$ is an odd number that divides $e$ and because it is coprime to $g'$ it also divides $\frac{e'}{g'}$.
\end{proof}

As a corollary we get
\begin{corollary}
	$\text{im}(\chi) = \text{im}(\chi'')$
\end{corollary}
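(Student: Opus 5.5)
The plan is to combine Lemma~\ref{image} with the lemma just proved, inside the ambient group $[M,\GmodTOP]\cong\Z/e\Z$. By Lemma~\ref{lem:group-structures-on-ni-of-E} there is no ambiguity about which group structure is used there. Lemma~\ref{image} gives $\im(\chi)=\langle \im(\chi'),\im(\chi'')\rangle = \langle A,B\rangle$. The preceding lemma gives $A\subseteq B$. Hence $\langle A,B\rangle = B$, and so $\im(\chi)=\im(\chi'')$. The inclusion $\im(\chi'')\subseteq\im(\chi)$ holds anyway, since $\chi''=\chi\circ\overline{r}$; so only the reverse inclusion actually uses the lemmas.

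The one point I would check carefully is that $A\subseteq B$ really follows from the divisibility $h'\mid\overline{e}$ established in the proof of the lemma.

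First, I would pin down the embedding of $A$. The group $A=\ker(\Psi)$ is a subgroup of $\Z/e\Z$, via the identification $[M,\GmodTOP]\cong[Y,\GmodTOP]\cong\Z/e\Z$ from the diagram with the isomorphism in the top row. Its image in $\ker(\Psi|_Y)$ lies in $\ker(\pi_1\circ\Psi|_Y')$. That kernel is the set of classes $x\in\Z/e\Z$ with $e\mid j_k'x$. This is the unique subgroup of order $h'=\gcd(j_k',e)$, namely $(e/h')\cdot\Z/e\Z$.

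Second, I would use the structure of the cyclic group. Since $\Z/e\Z$ is cyclic, each subgroup is determined by its order, and subgroups are ordered by inclusion according to divisibility of their orders. The group $B=\theta_k\cdot\Z/e\Z = g\cdot\Z/e\Z$ has order $\overline{e}=e/g$. So the subgroup of order $h'$ lies inside $B$ exactly when $h'\mid\overline{e}$, and this is what the lemma shows: $h'$ is odd, divides $e'$, and is coprime to $g'$, so it divides $e'/g'$, which divides $\overline{e}$. Since $A$ lies in the subgroup of order $h'$, we get $A\subseteq B$.

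The only mild obstacle I expect is checking that all of these identifications happen in the same copy of $\Z/e\Z$. Concretely, the isomorphism $[M,\GmodTOP]\to[Y,\GmodTOP]$ must carry both $\im(\chi')$ and $\im(\chi'')$ compatibly. This should hold because it is a group isomorphism of cyclic groups, and the statement that a subgroup is contained in another is invariant under any automorphism. The case $n=0$ needs separate care, because then $e=0$, so the group is $\Z$ and the subgroup $A$ is trivial. Since $h'=\gcd(j_k',0)=j_k'$ is finite while the kernel sits in a torsion-free group, $A=0\subseteq B$ trivially.
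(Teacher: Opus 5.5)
Your proposal is correct and follows the paper's argument: Lemma~\ref{image} gives $\im(\chi)=\langle A,B\rangle$, and the preceding lemma's divisibility $h'\mid\overline{e}$ gives $A\subseteq B$ inside the cyclic group $[M,\GmodTOP]$, so $\im(\chi)=B=\im(\chi'')$. Your separate treatment of $n=0$ is a useful addition, since the paper's bookkeeping with $e=2^{\beta}e'$ tacitly assumes $e\neq 0$; there $A=0$ because multiplication by $j_k'\neq 0$ on $\Z$ is injective.
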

Therefore the cokernel of $\chi$ contains $\frac{e}{e'} = g$ many elements.  This completes the proof of Theorem~\ref{fn}. \qed

Recall the map $\alpha^k_{m,n}$ from~\eqref{equation:alipha}. It can be composed with the isomorphism $\eta^{\TOP} \colon \mathcal{S}^{\TOP}(M^k_{m,n}) \rightarrow  \mathcal{N}^{\TOP}(M^k_{m,n})$. Theorem~\ref{thm:alpha} together with Theorem~\ref{fn} immediately yield:

\begin{theorem}\label{sameimage}
	The images of the maps $\eta^{\TOP} \circ \alpha_{m,n}^k$ and $\chi$ are the same.
\end{theorem}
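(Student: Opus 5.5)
The plan is to show that both images are the same subgroup of the cyclic group $\mathcal{N}^{\TOP}(M^k_{m,n}) \cong \Z/e\Z$, namely $\theta_k\cdot(\Z/e\Z)$. A cyclic group has exactly one subgroup of each order, so it suffices to identify each image as a subgroup and compare orders.

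\textbf{The image of $\chi$.} By the proof of Theorem~\ref{fn}, in particular the corollary $\im(\chi)=\im(\chi'')$ and the computation $\im(\chi'')\cong\theta_k\cdot(\Z/e\Z)$, this image is the subgroup generated by the class of $\theta_k$. For $n\neq 0$ it has order $\overline{e}=e/g$ with $g=\gcd(e,\theta_k)$. For $n=0$ the target is $\Z$ and the image is $\theta_k\Z$. In both cases the image equals $\gcd(e,\theta_k)\cdot(\Z/e\Z)$, with the convention $\Z/0\Z=\Z$ and $\gcd(0,\theta_k)=\theta_k$.

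\textbf{The image of $\eta^{\TOP}\circ\alpha^k_{m,n}$.} By Proposition~\ref{gen2}, the element $\alpha^k_{m,n}(m+j_kt,n)$ is sent to $\theta_k\cdot t\in\Z/2n\Z$. As $t$ ranges over $\Z$, the image is therefore exactly the cyclic subgroup generated by $\theta_k$, which is $\gcd(2n,\theta_k)\cdot(\Z/2n\Z)=g\cdot(\Z/e\Z)$. This is the same subgroup as above. For $n=0$ the image is $\theta_k\Z\subset\Z$, which again agrees.

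\textbf{Reconciling with Theorem~\ref{thm:alpha}.} Theorem~\ref{thm:alpha} is phrased with $\gcd(n,\theta_k)$, whereas the computation above gives $\gcd(2n,\theta_k)$. Since $\theta_k$ is even for $k\ge 3$ (it contains the factor $2^{2k-2}$), I expect to reconcile the two by citing Proposition~\ref{gen2} directly, which gives generator $\theta_k$, rather than the stated form of Theorem~\ref{thm:alpha}. This bookkeeping is the main obstacle, together with handling $n=0$ uniformly.

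Once the bookkeeping is settled, both images are the subgroup generated by $\theta_k$. Equality follows immediately, with no appeal to the uniqueness of subgroups needed, since the two subgroups have the same generator.
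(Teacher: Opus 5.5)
Your proposal is correct and follows the paper's route: the paper also compares subgroups of the cyclic group $\Z/e\Z$. It combines the realization computation (Theorem~\ref{thm:alpha}, which rests on Proposition~\ref{gen2}) with the determination $\im(\chi)=\im(\chi'')=\theta_k\cdot(\Z/e\Z)$ from the proof of Theorem~\ref{fn}. You were right to cite Proposition~\ref{gen2} directly: it gives the subgroup $\gcd(2n,\theta_k)\cdot(\Z/2n\Z)$, so the $\gcd(n,\theta_k)$ in the statement of Theorem~\ref{thm:alpha} must be read as $g=\gcd(e,\theta_k)$ (for $n=1$ the literal statement would give all of $\Z/2\Z$ although $\theta_k$ is even), which agrees with $g=2$ in the proof of Corollary~\ref{cor:if-n-theta-coprime-then-half-of-elements-are-bundles}.
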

Finally consider the following commutative diagram
\[
\begin{tikzcd}
	\mathcal{S}^{\DIFF}(M^k_{m,n}) \arrow[r, "\eta^{\DIFF}"] \arrow[d, "F^k_{m,n}"'] & \mathcal{N}^{\DIFF}(M^k_{m,n}) \arrow[d, "\chi"] \\
	\mathcal{S}^{\TOP}(M^k_{m,n}) \arrow[r, "\cong", "\eta^{\TOP}"'] & \mathcal{N}^{\TOP}(M^k_{m,n})
\end{tikzcd}
\]
where $\eta^{\CAT}$ is the map from the surgery exact sequence in $\CAT = \DIFF$ or $\TOP$ category, and the maps $F^k_{m,n}$ and $\chi$ are forgetful maps. We have already studied the map $\chi$, after using the identification by the bottom map $\eta^{\TOP}$, we obtain $\im(\eta^{\TOP} \circ F^k_{m,n}) \subseteq$ $\im(\chi)$. Also notice that $\im(\alpha_{m,n}^k) \subseteq \im(F^k_{m,n})$ since all manifolds are smooth. Hence, Theorem \ref{sameimage} gives us the following:

\begin{corollary}\label{acf}
	We have
	\[
	\im(\eta^{\TOP} \circ \alpha_{m,n}^k) = \im(\chi) = \im(\eta^{\TOP} \circ F^k_{m,n}).
	\]
\end{corollary}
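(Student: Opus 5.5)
The plan is to prove the chain of equalities by sandwiching $\im(\eta^{\TOP}\circ F^k_{m,n})$ between two sets that are already known to coincide. The surgery machinery gives a containment from above, and the explicit bundle construction gives a containment from below.

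\emph{Upper containment.} The square relating $\eta^{\DIFF}$, $\eta^{\TOP}$, $F^k_{m,n}$ and $\chi$ commutes. The composite $\mathcal{S}^{\DIFF}\to\mathcal{S}^{\TOP}\to\mathcal{N}^{\TOP}$ takes the normal invariant of a smooth homotopy equivalence $f\colon N\to M^k_{m,n}$. This equals the normal invariant of the smooth normal map with its bundle data forgotten to $\TOP$, so
\[
\eta^{\TOP}\circ F^k_{m,n}=\chi\circ\eta^{\DIFF},
\]
and hence $\im(\eta^{\TOP}\circ F^k_{m,n})\subseteq \im(\chi)$. The only point to check is that the identification $\mathcal{N}^{\CAT}\cong[M,\G/\CAT]$ is natural with respect to $\GmodO\to\GmodTOP$, which is standard.

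\emph{Lower containment.} Every element in the image of $\alpha^k_{m,n}$ is represented by a fibre homotopy equivalence $f_t\colon M^k_{m+j_kt,n}\to M^k_{m,n}$ (Proposition~\ref{foa}). Its source is a smooth manifold, and I would make sure $f_t$ can be taken smooth, or at least homotopic to a smooth map, so that $[(M^k_{m+j_kt,n},f_t)]\in\mathcal{S}^{\DIFF}(M^k_{m,n})$. Its image under $F^k_{m,n}$ is then $\alpha^k_{m,n}(m+j_kt,n)$. This gives $\im(\eta^{\TOP}\circ\alpha^k_{m,n})\subseteq\im(\eta^{\TOP}\circ F^k_{m,n})$.

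\emph{Closing the sandwich.} Theorem~\ref{sameimage} gives $\im(\eta^{\TOP}\circ\alpha^k_{m,n})=\im(\chi)$, so both containments become equalities. The main obstacle is that comparison, which amounts to bookkeeping with the identification $\mathcal{N}^{\TOP}(M)\cong\Z/e\Z$. On one side, Theorem~\ref{thm:alpha} and Proposition~\ref{gen2} show that $\im(\eta^{\TOP}\circ\alpha^k_{m,n})$ is the subgroup generated by $\theta_k$, which is $\gcd(n,\theta_k)\cdot\Z/2n$ as stated there. On the other side, the proof of Theorem~\ref{fn} gives $\im(\chi)=\im(\chi'')=\theta_k\cdot(\Z/e\Z)$. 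Both are the cyclic subgroup generated by the class of $\theta_k$, so they agree. I would confirm that both computations use the same identification $[M,\GmodTOP]\cong\Z/e\Z$ from diagram~\eqref{equation:ses}, for instance via the common reference through $[W^k_{m,n},\GmodTOP]\cong\Z$ and restriction. With that verified, the three images coincide.
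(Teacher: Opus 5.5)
Your proposal is correct and follows the paper's own argument: the commuting square relating $\eta^{\DIFF}$, $\eta^{\TOP}$, $F^k_{m,n}$ and $\chi$ gives $\im(\eta^{\TOP}\circ F^k_{m,n})\subseteq\im(\chi)$, the smooth sources $M^k_{m+j_kt,n}$ give $\im(\alpha^k_{m,n})\subseteq\im(F^k_{m,n})$, and Theorem~\ref{sameimage} closes the sandwich. One harmless slip in your aside: $\theta_k$ is even, so the subgroup of $\Z/2n\Z$ generated by $\theta_k$ is $\gcd(2n,\theta_k)\cdot\Z/2n\Z$, not the printed $\gcd(n,\theta_k)\cdot\Z/2n\Z$ of Theorem~\ref{thm:alpha} (the two differ, e.g., when $n$ is odd); your comparison only uses ``the subgroup generated by $\theta_k$'', so it is unaffected, and the identification worry is moot because every subgroup of a cyclic group is characteristic.
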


\begin{corollary}\label{cor:image-of-F-is-subgroup}
	The image of $F^k_{m,n}$ is a subgroup.
\end{corollary}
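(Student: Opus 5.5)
The image of $F^k_{m,n}$ will be identified with a subgroup by transporting along the group isomorphism $\eta^{\TOP}$. By Theorem~\ref{thm:top-str-set}, $\eta^{\TOP}\colon \mathcal{S}^{\TOP}(M^k_{m,n})\to \mathcal{N}^{\TOP}(M^k_{m,n})$ is an isomorphism of abelian groups. Lemma~\ref{lem:group-structures-on-ni-of-E} guarantees that there is no ambiguity about which group structure on $\mathcal{N}^{\TOP}(M^k_{m,n})\cong[M^k_{m,n},\GmodTOP]$ is meant. Therefore $\im(F^k_{m,n})$ is a subgroup of $\mathcal{S}^{\TOP}(M^k_{m,n})$ if and only if $\eta^{\TOP}(\im F^k_{m,n})=\im(\eta^{\TOP}\circ F^k_{m,n})$ is a subgroup of $\mathcal{N}^{\TOP}(M^k_{m,n})$.

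By Corollary~\ref{acf}, $\im(\eta^{\TOP}\circ F^k_{m,n})=\im(\chi)$. It therefore suffices to see that $\im(\chi)$ is a subgroup. For this I use that $\chi\colon[M,\GmodO]\to[M,\GmodTOP]$ is induced by the infinite loop map $\GmodO\to\GmodTOP$. It is thus a homomorphism when the target carries the Whitney-sum group structure, and by Lemma~\ref{lem:group-structures-on-ni-of-E} this is the same as the surgery group structure. The image of a homomorphism is a subgroup.

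As a more concrete cross-check, the proof of Theorem~\ref{fn} shows explicitly that $\im(\chi)=\im(\chi'')=\theta_k\cdot(\Z/e\Z)$, or $\theta_k\Z$ when $n=0$, which is visibly a cyclic subgroup. Alternatively, Corollary~\ref{acf} identifies $\im(\chi)$ with $\eta^{\TOP}(\im\alpha^k_{m,n})$, and by Theorem~\ref{thm:alpha} this is the subgroup $\gcd(n,\theta_k)\cdot\Z/2n\Z$. Pulling back by $\eta^{\TOP}$ gives the claim.

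The only delicate point is the compatibility of group structures. The subgroup statement must refer to the surgery-theoretic group structure on $\mathcal{S}^{\TOP}$, under which $\eta^{\TOP}$ is a group isomorphism. The homomorphism property of $\chi$, on the other hand, uses the Whitney-sum structure on $\GmodTOP$. Lemma~\ref{lem:group-structures-on-ni-of-E} is exactly what bridges the two, so I expect no further obstacle.
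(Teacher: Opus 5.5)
Your proposal is correct and follows the paper's proof. Like the paper, you transport along the group isomorphism $\eta^{\TOP}$, use Corollary~\ref{acf} to identify the image with $\im(\chi)$, and note that $\chi$ is a homomorphism; your explicit appeal to Lemma~\ref{lem:group-structures-on-ni-of-E} just makes precise a compatibility of group structures that the paper leaves implicit.

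One small aside on your cross-checks: they give different subgroups, $\theta_k\cdot(\Z/e\Z)=\gcd(2n,\theta_k)\cdot\Z/2n\Z$ versus $\gcd(n,\theta_k)\cdot\Z/2n\Z$. The first is right, since Proposition~\ref{gen2} yields $\gcd(2n,\theta_k)$ and the $\gcd(n,\theta_k)$ in Theorem~\ref{thm:alpha} should be read that way. Either way the image is a subgroup.
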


\begin{proof}
	This follows since $\im (\chi)$ is a subgroup, because $\chi$ is a homomorphism.
\end{proof}

\begin{proof}[Proof of Theorem~\ref{thm:the-forgetful-map-on-str-sets}]
	Combine Corollaries \ref{acf} and \ref{cor:image-of-F-is-subgroup} with Theorem \ref{fn}.
\end{proof}

\begin{proof}[Proof of Corollary~\ref{cor:never-surjective}]
	Both $e$ and $\theta_k$ in the statement of Theorem~\ref{thm:the-forgetful-map-on-str-sets} are even, so we always have $g \geq 2$. 
\end{proof}

\begin{proof}[Proof of Corollary~\ref{cor:any-smooth-is-bundle}]
	This follows directly from Corollary~\ref{acf}.
\end{proof}

\begin{proof}[Proof of Corollary~\ref{cor:if-n-theta-coprime-then-half-of-elements-are-bundles}]
	In this case $g=2$.
\end{proof}

\section{Discussion} \label{sec:leftovers}


Some of the technical statements that we proved in the previous section can be used to establish the following interesting result.

\begin{theorem}\label{cor:realization-of-ni-when-n-theta-coprime}
	Let $N$ be a closed $(8k-1)$-dimensional manifold with $k\ge3$ and gcd$(n,\theta_k)=1$. Then there exists a homotopy equivalence $h:N\rightarrow M_{m,n}^k$ with an even normal invariant if and only if $N$ is homeomorphic to $M_{m',n}$ where $m'=m+j_{k}\cdot t$ for some $t \in \Z$. 
\end{theorem}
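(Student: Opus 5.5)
The plan is to identify $\mathcal{S}^{\TOP}(M_{m,n}^k)$ with $\mathcal{N}^{\TOP}(M_{m,n}^k)\cong \Z/2n\Z$ via the isomorphism $\eta$ of Theorem~\ref{thm:top-str-set}. We then read ``even normal invariant'' as: $\eta(h)$ lies in the index-two subgroup $2\cdot(\Z/2n\Z)$. This subgroup is well defined since $2n$ is even. The assumption $\gcd(n,\theta_k)=1$ enters through Theorem~\ref{thm:alpha}.

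\emph{First step (compute the image of $\alpha$).} Because $\theta_k$ is even and $\gcd(n,\theta_k)=1$, we get $\gcd(2n,\theta_k)=2$. Hence the subgroup $\theta_k\cdot(\Z/2n\Z)$ equals $2\cdot(\Z/2n\Z)$, the set of even classes. Likewise, the image $\gcd(n,\theta_k)\cdot\Z/2n\Z$ in Theorem~\ref{thm:alpha} should be read as generated by $\theta_k\cdot t$ (Proposition~\ref{gen2}). So the image of $\eta\circ\alpha^k_{m,n}$ is exactly the even classes; in particular it has index $2$.

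\emph{The ``if'' direction.} Suppose $N\cong M_{m',n}^k$ with $m'=m+j_k t$. Proposition~\ref{foa} gives the fibre homotopy equivalence $f_t\colon M_{m',n}^k\to M_{m,n}^k$, and by Proposition~\ref{gen2} it has normal invariant $\theta_k t$, which is even. Precomposing with the homeomorphism $N\cong M^k_{m',n}$ gives $h$, and this does not change the class in the structure set, hence does not change the normal invariant.

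\emph{The ``only if'' direction.} Let $h\colon N\to M_{m,n}^k$ have even normal invariant $x$. By the first step, $x=\theta_k t$ for some $t\in\Z$. Then $\eta([N,h])=\eta([M^k_{m',n},f_t])$ with $m'=m+j_kt$. Since $\eta$ is injective (Theorem~\ref{thm:top-str-set}: simply connected, odd dimension $8k-1$), we get $[N,h]=[M^k_{m',n},f_t]$ in $\mathcal{S}^{\TOP}(M^k_{m,n})$. By the definition of the structure set there is then a homeomorphism $N\cong M^k_{m',n}$.

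\emph{Main obstacle.} The only delicate point is in the ``only if'' direction: we must ensure that the group structure on $\mathcal{N}^{\TOP}$ used to call $\theta_k t$ ``even'' is the one under which Proposition~\ref{gen2} computes $\eta(f_t)$. Lemma~\ref{lem:group-structures-on-ni-of-E} settles this. We must also check that the element $\theta_k t$ is hit by an actual $t$, not merely up to sign, which is immediate since $t$ ranges over $\Z$. One further subtlety is that $N$ could a priori be non-smooth. This causes no problem, because topological injectivity of $\eta$ already forces a homeomorphism to the smooth $M^k_{m',n}$.
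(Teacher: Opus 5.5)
Your proof is correct and follows the paper's route: the paper derives this statement directly from Theorem~\ref{thm:alpha}, combined with the bijectivity of $\eta\colon\mathcal{S}^{\TOP}(M^k_{m,n})\to\mathcal{N}^{\TOP}(M^k_{m,n})$, exactly as you do. You were right to read the image through Proposition~\ref{gen2} as $\theta_k\cdot(\Z/2n\Z)=\gcd(2n,\theta_k)\cdot(\Z/2n\Z)$, which is the even classes when $\gcd(n,\theta_k)=1$; the literal $\gcd(n,\theta_k)$ in Theorem~\ref{thm:alpha} should be $\gcd(2n,\theta_k)$, consistent with Theorem~\ref{fn} and the proof of Corollary~\ref{cor:if-n-theta-coprime-then-half-of-elements-are-bundles}.
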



\Addresses


\begin{thebibliography}{10}
	\bibitem{JA66}
	J. F. Adams, \textit{On the groups $J(X)$}, $IV$, Topology 5:21-71, 1966.
	
	\bibitem{JA73}
	J. F. Adams, \textit{Stable homotopy and generalized homology}, University of Chicago Press, Chicago, 1973.
	
	\bibitem{biswas2026}
	S. Biswas, \textit{Smooth manifolds homotopy equivalent to products of spheres}, arxiv:2606.10239, 2026.
	
	\bibitem{GEB}
	G. E. Bredon, \textit{Topology and Geometry}, Graduate Texts in Mathematics 139, Springer-Verlag Inc., New York, 1993.
	
	\bibitem{GB68}
	G. Brumfiel, \textit{On the homotopy groups of $BPL$ and $PL/O$}, Annals of Mathematics, 88(2):291-311, 1968.
	
	\bibitem{CW2021}
	S. Chang, S. Weinberger, \textit{A course on surgery theory}, Annals of Mathematics Studies 211, Princeton University Press, 2021.
	
	\bibitem{CCPS23}
	A. Conway, D. Crowley, M. Powell, and J. Sixt, \textit{Simply-connected manifolds with large homotopy stable classes}, J. of Australian Math. Soc., 115(2):172-203, 2023.
	
	\bibitem{DC10}
	D. Crowley, \textit{The smooth structure set of $S^p \times S^q$}, Geometriae Dedicata, 148:15-33, 2010.
	
	\bibitem{CE03} 
	D. Crowley, C. M. Escher,\textit{ A Classification of $S^3$-bundles over $S^4$}, Differential Geometry and its Applications, Elsevier, 18(3):363-380, 2003.
	
	
	\bibitem{DL59}
	A. Dold, R. Lashof, \textit{Principal quasi-fibration and fibre homotopy equivalences of the bundles}, Illinois J. Math. 3:285-305, 1959.
	
	
	\bibitem{AH02}
	A. Hatcher, \textit{Algebraic topology}, Cambridge University Press, Cambridge, 2002.
	
	\bibitem{IWX20}
	D.C. Isaksen, G. Wang and Z. Xu, \textit{Stable homotopy groups of spheres}, PNAS, 117(40):24757-24763, 2020.
	
	\bibitem{JW54}
	I. M. James, J.H.C Whitehead, \textit{The homotopy theory of sphere bundle over spheres I}, Proc. London Math. Soc., 3(4):198-218, 1954.
	
	\bibitem{JW55}
	I. M. James, J.H.C Whitehead, \textit{The homotopy theory of sphere bundle over spheres II}, Proc. London Math. Soc., 3(5):148-166, 1955.
	
	
	\bibitem{James(1954)}
	I. M. James, \textit{On the iterated suspension}, Q. J. Math., Oxf. II. Ser.5:1-10, 1954.
	
	
	\bibitem{KS77}
	R. C. Kirby, L. C. Siebenmann, \textit{Foundational essays on topological manifolds, smoothings, and triangulations}, Annals of Mathematics Studies, No. 88, Princeton University Press, 1977.
	
	
	\bibitem{Levine(1983)}
	J. P. Levine: \textit{Lectures on groups of homotopy spheres}, Algebraic and geometric topology (New Brunswick, N.J., 1983):62--95, Springer, 1985.
	
	\bibitem{LM24}
	W. L\"uck, T. Macko, \textit{Surgery Theory: Foundation}, Grundlehren der
	mathematischen Wissenschaften, Volume 362, Springer Switzerland, 2024
	
	\bibitem{MM}
	I.B. Madsen, R.J. Milgram, \textit{The Classifying Spaces For Surgery and Cobordism of Manifolds}, Princeton University Press, New Jersey, 1979.
	
	\bibitem{Mahowald(1965)} M. Mahowald, \textit{Some {Whitehead} products in {{\(S^n\)}}}, Topology, 4:17-26, 1965. 
	
%
	
	\bibitem{RM24}
	A. Raj, T. Macko, \textit{On Manifolds Homotopy Equivalent to the Total Spaces of $S^7$-Bundles Over $S^8$}, Archivum Mathematicum, 60(3):125-134, 2024.
	
%
%
	
%
%
%
	
	\bibitem{Wall99}
	C.T.C. Wall, \textit{Surgery on Compact Manifolds}, Second Edition, Mathematical Surveys and Monographs, 69, AMS, 1999.
	
	\bibitem{GWW}
	G.W. Whitehead, \textit{On Products in Homotopy Groups}, Annals of Mathematics, 40(3):460-475, July 1946.
	
	
	\bibitem{Whitehead(1978)}
	G. W. Whitehead, \textit{Elements of homotopy theory}, GTM {61}, {Springer Cham} {1978}.
	
	\bibitem{zhu-pan(2026)}
	Zhongjian Zhu and Jianzhong Pan, \textit{Homotopy classification of $S^{2k-1}$-bundles over $S^{2k}$}, arXiv:2508.14341, 2026.
\end{thebibliography}
\end{document}